\documentclass[a4paper,reqno,10pt]{amsart}

\pdfoutput=1

\usepackage[USenglish]{babel}
\usepackage[utf8x]{inputenc}

\usepackage[style=american]{csquotes}
\usepackage{ucs}
\usepackage{yfonts}
\usepackage{amsmath,amsthm,amscd,amssymb}
\usepackage{MnSymbol}
\usepackage[all]{xy}
\usepackage{smartref}
\usepackage{algorithmic}
\usepackage{tikz}
\usepackage{hyperref}
\usepackage{algorithm, caption}
\usepackage[lite,shortalphabetic]{amsrefs}
\usepackage{faktor}
\usepackage{marvosym}
\usepackage{caption}


\theoremstyle{definition}
\newtheorem{defn}{Definition}[section]
\newtheorem{ex}[defn]{Example}
\newtheorem{notation}[defn]{Notation}
\newtheorem{remark}[defn]{Remark}

\newtheorem*{acknowledgement}{Acknowledgement}

\newtheorem*{remark*}{Remark}

\theoremstyle{plain}
\newtheorem{theorem}[defn]{Theorem}
\newtheorem{corollary}[defn]{Corollary}
\newtheorem{lemma}[defn]{Lemma}
\newtheorem{prop}[defn]{Proposition}


\newcommand{\abs}[1]{\left\lvert #1 \right\rvert} 
\newcommand{\gnrt}[1]{\left\langle #1 \right\rangle} 

\newcommand{\Z}{\mathbb{Z}}

\newcommand{\R}{\mathbb{R}}

\renewcommand{\P}{\mathbb{P}}

\newcommand{\relint}{\textnormal{relint}\,}

\newcommand{\Star}{\textnormal{Star}}

\newcommand{\mn}{\mathcal{M}_{0,n}}
\newcommand{\mk}[1]{\mathcal{M}_{0,#1}^\textnormal{trop}}
\newcommand{\val}{\textnormal{val}}
\newcommand{\ft}{\textnormal{ft}}
\newcommand{\curly}[1]{\mathcal{#1}}

\newcommand{\polymake}{\texttt{polymake}}
\newcommand{\atint}{\texttt{a-tint}}

\newcommand{\tsr}{\otimes} 
\newcommand{\wo}{\setminus} 

\definecolor{comment}{rgb}{.2,.2,.2}

\newcommand{\hw}[1]{\mathbb{H}_{#1}^{\textnormal{trop}}}
\newcommand{\hwt}[1]{\tilde{\mathbb{H}}_{#1}^{\textnormal{trop}}}

\newcommand{\ev}{\textnormal{ev}}




\numberwithin{equation}{section}
\setcounter{tocdepth}{1}

\definecolor{DarkGreen}{rgb}{0,0.5,0}
\definecolor{DarkRed}{rgb}{0.8,0,0}

\parindent 0mm
\parskip 1ex 

\newcommand{\atcommand}{atint  $>$ }

\renewcommand{\relint}{\textnormal{relint}}

\newtheorem*{theorem*}{Theorem}
\newtheorem*{corollary*}{Corollary}

\definecolor{darkgreen}{rgb}{0,0.5,0}

\tikzset{
  >=latex  
}

\begin{document}
\title{Combinatorics of tropical Hurwitz cycles}
\author {Simon Hampe}
\address {Simon Hampe, Technische Universität Berlin, Institut für Mathematik, Sekretariat MA 6-2, Straße des 17. Juni 136, 10623 Berlin, Germany}
\email {hampe@math.tu-berlin.de}
\begin{abstract}
  We study properties of the tropical double Hurwitz loci defined by Bertram, Cavalieri and Markwig. We show that all such loci are connected in codimension one. If we mark preimages of simple ramification points, then for a generic choice of such points the resulting cycles are weakly irreducible, i.e.\ an integer multiple of an irreducible cycle. We study how Hurwitz cycles can be written as divisors of rational functions and show that they are numerically equivalent to a tropical version of a representation as a sum of boundary divisors. The results and counterexamples in this paper were obtained with the help of \atint, an extension for \polymake\ for tropical intersection theory.
\end{abstract}
\maketitle

\section{Introduction}

Roughly speaking, Hurwitz numbers count covers of $\mathbb{P}^1$ by complex curves $C$ of some genus $g$ --- but with a given degree and some special ramification profile over a certain number of points\footnote{In fact, one can consider this problem in even greater generality by counting covers $C \to C'$, where $C$ and $C'$ are curves of prescribed genera $g$ and $g'$.}. For example, \emph{single} Hurwitz numbers require the cover $C \to \mathbb{P}^1$ to have a specific ramification profile over some special point (usually $\infty$) and only simple ramification elsewhere. These numbers have played a significant role in the study of the intersection theory of the moduli spaces $\overline{\curly{M}}_{g,n}$ of curves. The \emph{ELSV formula} \cite{elsvformula} relates Hurwitz numbers to certain intersection products of tautological classes on $\overline{\curly{M}}_{g,n}$. This was then used by Okounkov and Pandharipande to prove Witten's conjecture \cite{ophurwitz} - though the first proof of this is of course due to Kontsevich \cite{kintersectiontheory}.

To obtain \emph{double} Hurwitz numbers, we fix the ramification over two points in $\mathbb{P}^1$, usually 0 and $\infty$. These numbers not only occur in algebraic geometry, but also in representation theory and combinatorics - thus providing a strong connection between a wide variety of disciplines. An overview over the different definitions of double Hurwitz numbers can for example be found in \cite{jhurwitzoverview}. An ELSV-type formula has been conjectured by Goulden, Jackson and Vakil in \cite{gjvtowardsgeometry}, where it is also shown that these numbers are piecewise polynomial in terms of the ramification profile. By convention, one writes the profile as $x \in \Z^n$ with $\sum x_i = 0$. The interpretation of this is that the positive part $x^+$ gives the ramification profile over $0$ and the negative part $x^-$ gives the ramification profile over $\infty$. A special feature of double Hurwitz numbers is the fact that the number of simple ramification points only depends on the length of the ramification profile, not on the multiplicities. The number of additional simple ramification points is then $n-2+2g$. This fact will be very helpful in defining higher-dimensional cycles.

The generalization to Hurwitz cycles is achieved by letting one or more of the images of simple ramification points \enquote{move around} in $\mathbb{P}^1$. In the general case, these loci were defined and studied by Graber and Vakil in \cite{gvrelativevirtual}. In the genus 0 case, Bertram, Cavalieri and Markwig proved that these cycles are linear combinations of cycles with coefficients that are piecewise polynomial in the entries of the ramification profile \cite{bcmhurwitz}. They also considered tropical versions $\hw{k}(x,p)$ and $\hwt{k}(x,p)$, respectively, of double Hurwitz loci and showed that their combinatorics relate very nicely to the combinatorics of the different strata of the algebraic loci via dualizing of graphs. Here $\hwt{k}(x,p)$ differs from $\hw{k}(x,p)$ in that the preimages of the simple ramification points $p_i$ are also marked.

Higher-dimensional Hurwitz loci were a key ingredient in the study of tautological classes of $\overline{M}_{g,n}$ in \cite{gvrelativevirtual}. For tropical geometers, they are also of particular interest in the search for a more conceptual approach to enumerative geometry. So far, tropical enumerative results could only be translated to results in algebraic geometry by using correspondence theorems (e.g.\ \cites{menumerative, cjmhurwitz,bbmopenhurwitz, nstoricdegenerations}). These theorems only apply to very specific enumerative problems. A more general result which could, for example, relate intersection rings of algebraic and tropical moduli spaces, would make tropical enumerative geometry much more powerful. The fact that Hurwitz numbers (and possibly, Hurwitz cycles) are so closely related to intersection theory on $\overline{M}_{g,n}$ makes them a good starting point for this approach. A natural question to ask in this context is whether the algebraic Hurwitz cycle somehow tropicalizes onto the tropical one. In \cite{bcmhurwitz}, the tropical Hurwitz cycles are obtained by translating a Gromov-Witten type formula to its tropical analogue. While the definition is rather simple and involves only the well-known tropical moduli space of rational curves, the cycles itself are rather large (in terms of ambient dimension and number of polyhedral cells) even for small examples and difficult to study \enquote{by hand}. This makes it very hard to prove a more concrete tropicalization result. We will therefore start by studying the tropical Hurwitz cycles and their properties to make them more accessible.

There are two main properties we want to consider in this paper: connectedness in codimension one and irreducibility. The first is relevant for computational purposes, as well as a necessary condition for the second property. Irreducibility itself is important if one wants to prove equality of tropical cycles --- thus providing an important step towards a potential tropicalization statement relating classical and tropical Hurwitz cycles. We will also consider how Hurwitz cycles can be written as divisors of rational functions and how they relate to tropical translations of other representations of algebraic Hurwitz cycles.

Classically, questions about irreducibility and connectedness of Hurwitz spaces have been considered for a long time. Hurwitz \cite{hriemannsche} showed that the space of simple branched covers  of $\mathbb{P}^1$ is connected, using results of Clebsch and Lüroth \cite{ctheorieriemann}. Severi \cite{salgebraische} used this to show that $\curly{M}_g$ is irreducible. These questions become much more difficult however, if one allows target curves of higher genus or more complicated ramification and monodromy - this is a very actively researched topic, see for example \cites{beclassificationbranched, ghsnoteonhurwitz,kirreducibility, virreducibility}.

A very helpful tool in the study of Hurwitz cycles is \atint\footnote{see also \url{https://github.com/simonhampe/atint}}\ \cite{hatint}, an extension for \polymake\footnote{see also \url{www.polymake.org}}\ \cite{gjpolymake}  for tropical intersection theory. With its focus on moduli of curves it provides an easy way to compute examples and a quick method for testing conjectures.

In Section \ref{section_prelim_tropical} we review the basic definitions of tropical geometry. We define tropical varieties and the basic notions of tropical intersection theory. We give a definition of connectedness and irreducibility and discuss their relevance in more detail. We conclude this section with a short introduction to moduli of rational curves and stable maps. In \ref{section_prelim_hurwitz} we define algebraic and tropical Hurwitz cycles. We then look at the latter in more detail, i.e.\ we describe the tropical covers that they parametrize and how a tropical Hurwitz cycle can be computed. In Section \ref{section_connected} we study whether tropical Hurwitz cycles are connected in codimension one. We give a combinatorial proof of the following result:

\begin{theorem*}[Theorem \ref{main_thm_connected}]
 For all $k$, $p$ and $x$, the cycles $\hwt{k}(x,p)$ and $\hw{k}(x,p)$ are connected in codimension one.
\end{theorem*}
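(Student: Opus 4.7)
The plan is to describe the top-dimensional cells of the two cycles combinatorially, then show that the graph whose vertices are top cells and whose edges record the sharing of a codim-one face is connected. Top-dimensional cells of $\hwt{k}(x,p)$ correspond to combinatorial types of the source curve of a valid tropical cover with the prescribed ramification data over $0$, $\infty$ and the fixed points $p_i$: each is a labelled tree equipped with an assignment of edge expansion factors satisfying the balancing condition. Two such top cells share a codim-one face precisely when their combinatorial types differ either by the contraction of a single bounded edge of the source tree followed by a different resolution of the resulting higher-valent vertex, or by two of the $p_i$ becoming equal along a wall (in which case the forgetful picture of the cover is unchanged but the ordering of marked preimages is reshuffled). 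Both kinds of wall correspond to an NNI-type local swap on the combinatorial type.

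I would first prove the statement for $\hwt{k}(x,p)$, since the extra $p_i$-preimage markings give the finer stratification and more combinatorial control. Fix a canonical combinatorial type $T_0$, e.g.\ a caterpillar tree in which all labelled leaves are attached to a single spine in a prescribed order. The goal is to show that any combinatorial type $T$ occurring in the cycle can be connected to $T_0$ by a finite chain of local swaps, each traversing exactly one codim-one wall. A natural complexity measure to induct on is an NNI-type distance between $T$ and $T_0$: at every step one locates a bounded edge of $T$ whose contraction and re-expansion decrease this distance, and one verifies that the resulting type still appears in $\hwt{k}(x,p)$.

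The main obstacle is to guarantee that these local swaps never leave the Hurwitz cycle, i.e.\ that the re-expanded combinatorial type still carries edge expansion factors consistent with the ramification conditions. This forces a case distinction according to the type of the vertices incident to the contracted edge, in particular whether they correspond to preimages of $0$, of $\infty$, of one of the $p_i$, or to unramified \enquote{interior} points. In each case one has to check that the balancing condition at the newly created vertex admits a non-trivial alternative resolution; this comes down to a short combinatorial computation on the edge expansion factors, using that the entries of $x$ sum to zero. In degenerate situations where a direct single swap is not available, I would detour through a longer path that temporarily contracts more bounded edges before re-expanding.

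Finally, the statement for $\hw{k}(x,p)$ is obtained from that for $\hwt{k}(x,p)$ via the forgetful morphism $\ft\colon \hwt{k}(x,p) \to \hw{k}(x,p)$ that drops the $p_i$-preimage markings. This map is surjective on top cells and, being a morphism of polyhedral complexes of the same dimension, sends a codim-one adjacency in the source either to a codim-one adjacency in the target or to a coincidence of top cells. A connecting chain of top cells in $\hwt{k}(x,p)$ therefore descends to a connecting chain in $\hw{k}(x,p)$, finishing the proof.
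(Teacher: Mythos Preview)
Your outline has the right overall shape---choose a canonical caterpillar type, connect every maximal cell to it by a chain of codimension-one moves, and deduce the unmarked case from the marked one via $\ft$---and the last paragraph is essentially the paper's argument for $\hw{k}$. But the proof stops exactly where the actual work begins. You correctly identify as the ``main obstacle'' that an NNI swap can leave the cycle, and then defer to ``a short combinatorial computation on the edge expansion factors'' and, in degenerate cases, an unspecified ``detour through a longer path''. These are not formalities: the paper's Remark \ref{remark_bad_moves} gives explicit examples where moving a leaf across a bounded edge flips the edge orientation, or produces a configuration incompatible with the order $p_0<\dots<p_{N-1}$, so the naive NNI move is forbidden. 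Designing the detours is the entire content of the proof, and a generic NNI-distance argument does not supply them. Also, your description of a second type of codimension-one wall, where ``two of the $p_i$ become equal'', is off: the $p_i$ are fixed real numbers, not coordinates on the cycle, so for generic $p$ every codimension-one face of $\hwt{k}(x,p)$ arises from contracting a bounded edge of the underlying $n$-marked curve.

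What the paper actually does is a two-level induction that you are missing. First it treats $k=1$ by induction on $n$: the base case $n=5$ is a complete case analysis (Lemma \ref{lemma_connected_five}), and for larger $n$ it introduces \emph{chain covers} and \emph{standard covers} (Definition \ref{connect_defn_std}) and a \emph{split cover} construction that cuts the curve at an edge adjacent to $p_0$, applies the inductive hypothesis to the smaller piece, and then re-glues. This is the precise ``detour'' machinery, and it relies on the fact that $p_0$ is minimal, so re-attaching along the splitting leaf always yields a valid cover. Only after $k=1$ is established in full does the paper do the induction on $k$: one adds an extra marked vertex to pass to $\hwt{k-1}$, uses the inductive hypothesis there, and then lifts the connecting path back by forgetting the extra mark. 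Your proposal collapses these two inductions into a single NNI-distance descent and, as written, gives no mechanism to guarantee that each step stays inside the cycle.
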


In \ref{section_irreducible} we use this to show that all marked Hurwitz cycles are weakly irreducible for a generic choice of simple ramification points:

\begin{corollary*}[Corollary \ref{main_cor_irred}]
 For any $x$ and any pairwise different $p_j$, $\hwt{k}(x,p)$ is weakly irreducible.
\end{corollary*}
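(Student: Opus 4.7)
My plan is to reduce weak irreducibility to a local statement at codimension-one faces, combined with Theorem \ref{main_thm_connected}. Recall that a pure-dimensional balanced weighted polyhedral complex is weakly irreducible if and only if (a) it is connected in codimension one, and (b) at every codimension-one face $\tau$ the space of balanced rational weightings of the maximal cones containing $\tau$ is one-dimensional. Condition (a) is exactly Theorem \ref{main_thm_connected}, so the whole burden is (b).

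To set up the local computation I would use the description of $\hwt{k}(x,p)$ from Section \ref{section_prelim_hurwitz} as a cycle inside (a marked variant of) $\mk{n}$ cut out by conditions coming from the tropical evaluation maps at the simple branch points. Maximal cones of $\hwt{k}(x,p)$ correspond to combinatorial types of tropical covers, and a codimension-one face $\tau$ corresponds to a type that is one degeneration away from being top-dimensional. The maximal cones meeting $\tau$ are then in bijection with the possible resolutions of that single degeneration, each carrying its canonical tropical multiplicity.

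The heart of the argument is to show that, under the assumption that the points $p_j$ are pairwise distinct, the balancing equations at $\tau$ (inherited from the balancing of $\mk{n}$ and twisted by the ev-multiplicities) form a linear system of nullity exactly one. The distinctness of the $p_j$ is essential here: it guarantees that no two simple branch conditions can simultaneously degenerate at $\tau$, so only one degeneration is undone when we pass from $\tau$ to an adjacent maximal cone. Consequently the list of adjacent types is short and explicit, and the balancing system reduces to the classical balancing of a tropical line in $\mk{n}$ (modified by scalar ev-multiplicities), which is well-known to have a one-dimensional solution space. If two $p_j$ were allowed to coincide, several codim-one strata would merge and the system could genuinely admit higher-dimensional kernels, so genericity is indispensable.

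Once (b) is established at every codim-one face, connectedness in codimension one lets me propagate the scalar: along any path of codim-one faces joining two maximal cones, the weights are forced to stay in constant ratio, so any balanced weighting of $|\hwt{k}(x,p)|$ is a single rational multiple of the canonical one. Multiplying through by the denominator shows $\hwt{k}(x,p)$ is an integer multiple of an irreducible cycle, i.e.\ weakly irreducible. The main obstacle I expect is step (b), specifically the combinatorial bookkeeping needed to enumerate, for each codim-one cover type, the adjacent maximal types and to verify by a direct calculation with the ev-multiplicities of Section \ref{section_prelim_hurwitz} that the resulting balancing matrix really has nullity one; everything else is formal.
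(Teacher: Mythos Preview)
Your strategy is exactly the paper's: combine Theorem \ref{main_thm_connected} with Proposition \ref{intro_prop_irred}, reducing the problem to showing that $\Star_{\hwt{k}(x,p)}(\tau)$ has weight lattice of rank one at every codimension-one face $\tau$. Two refinements would turn your sketch into the actual proof.

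First, the local picture splits into \emph{two} cases, not one. Distinctness of the $p_j$ guarantees that $C_\tau$ has a unique vertex $v$ adjacent to four non-contracted edges or leaves, but $v$ may or may not additionally carry a contracted end. If it does not, there are three adjacent maximal cones and, projecting the normal vectors to the $\mk{4}$-factor of $\Star_{\mk{N}}(\tau)$, you get scalar multiples of the three rays of $\mk{4}$; this is the ``tropical line'' case you describe. If $v$ \emph{does} carry a contracted end, there are \emph{six} adjacent maximal cones (three resolutions of the four-valent vertex, times two placements of the contracted end), and the projected normal vectors are multiples of the six rays $v_{\{i,j\}}$ with $i,j\neq 5$ in $\mk{5}$. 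These are the rays of the Psi class $\psi_5\cdot\mk{5}$, and the fact that they admit a unique balanced weighting is a separate (known) statement, used in \cite{kmpsiclasses}. Your phrase ``balancing of a tropical line'' covers only the first case.

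Second, the ``ev-multiplicities'' are a distraction here. You are computing the rank of the weight lattice of the Star, which depends only on the \emph{directions} of the primitive normal vectors $u_{\sigma/\tau}$, not on the particular weights that the Hurwitz construction assigns. The scalars that show up are lattice indices from projecting to the $\mk{4}$- or $\mk{5}$-factor, and they do not affect whether the rank is one.
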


We conclude that section with computational examples showing that this is the strongest possible statement.

In \ref{section_hurwitz_cutting} we study how Hurwitz cycles can be cut out by rational functions on $\mk{n}$. We know from \cite{fcocycles} that each subcycle of a matroidal fan (such as $\mk{n}$) can be written as the sum of products of rational functions, but the result is non-constructive. We show that $\hw{n-4}(x,0)$ can be cut out by the rational function that adds up distances of vertex images of covers. To prove this we define the \emph{push-forward} of a rational function under a morphism of equidimensional tropical varieties whose target is smooth.

Finally, in \ref{section_hurwitz_boundary} we consider an alternative representation of the algebraic Hurwitz cycle given in \cite{bcmhurwitz} and its \enquote{tropicalization}. We show that this new tropical cycle is \emph{numerically equivalent} to $\hw{k}(x)$, thus obtaining a strong indicator that our notion of naively tropicalizing is the correct one.

\begin{remark*}
 A paper by Cavalieri, Markwig and Ranganathan \cite{cmrtropicalcompactification}, which appeared shortly after the first submission of this paper, proves that indeed tropical Hurwitz cycles are tropicalizations of algebraic Hurwitz cycles. As a corollary, they obtain the connectedness in codimension one for unmarked Hurwitz cycles (Theorem 3 and Corollary 4). 
\end{remark*}

\begin{acknowledgement}
 I would like to thank Hannah Markwig for many inspiring discussion and the anonymous referees for their helpful suggestions. I was supported by DFG grants MA 4797/3-1 and MA 4797/1-2.
 
 The final publication is available at Springer via http://dx.doi.org/10.1007/s10801-015-0615-0.
\end{acknowledgement}

\section{Preliminaries}\label{section_prelim}

\subsection{Tropical geometry}\label{section_prelim_tropical}

\subsubsection{Weighted polyhedral complexes}

\begin{notation}
 Let $\Lambda$ be a lattice (i.e.\ a finitely generated free abelian group) and $V := \Lambda \tsr_\Z \R$ the associated vector space. We assume all polyhedra in $V$ to be rational, i.e.\ defined by inequalities $g(x) \geq \alpha$ with $g \in \Lambda^\vee$. For a polyhedron $\sigma$ we write $V_\sigma := \gnrt{a-b; a,b \in \sigma}_{\R}$ for the linear part of its affine space and $\Lambda_\sigma := V_\sigma \cap \Lambda$ for its associated lattice.
\end{notation}

\begin{defn}
 A \emph{weighted polyhedral complex} $(\Sigma, \omega)$ is a pure, rational, polyhedral complex $\Sigma$ in $V = \Lambda \tsr_\Z \R$ together with a weight function $\omega$ on its maximal cells, taking values in $\Z$. We write $\abs{\Sigma} := \bigcup_{\sigma \in \Sigma}\sigma$ for the \emph{support} of $\Sigma$.
 
 Let $\sigma$ be a rational $d$-dimensional polyhedron and $\tau$ a face of $\sigma$ of dimension $d-1$ The \emph{lattice normal vector} of $\tau$ with respect to $\sigma$, denoted by $u_{\sigma/\tau}$, is the unique generator of $\Lambda_\sigma / \Lambda_\tau \cong \Z$, such that $g(u_{\sigma/\tau}) > 0$ for all $g \in \Lambda_{\sigma}^\vee$ with $g_{\mid \tau} = 0$ and $g_{\mid \sigma} \geq 0$. By abuse of notation we also write any representative of $u_{\sigma/\tau}$ in $V$ with the same letter.
 
 We call a weighted complex $(\Sigma,\omega)$ \emph{balanced}, if for all codimension one cells $\tau$ the following holds:
 $$\sum_{\sigma > \tau} \omega(\sigma) u_{\sigma/\tau} \in V_\tau.$$
 
 A \emph{tropical cycle} is the equivalence class of a balanced weighted complex modulo refinement, i.e.\ we consider two balanced complexes to be the same, if they have a common refinement respecting the weights. By abuse of notation, we will often use the same letter for a tropical cycle and its polyhedral structure.
 
 A \emph{tropical variety} is a tropical cycle whose weights are greater than zero.
 
 Let $(\Sigma,\omega)$ be a weighted complex and $\tau$ any cell in $\Sigma$. We define the local fan at $\tau$ to be the weighted fan
 $$\Star_\Sigma(\tau) := ( \{ \Pi(\sigma - \tau); \tau \leq \sigma\}, \omega_\Star),$$
 where $\Pi: \R^n \to \R^n / V_\tau$ is the residue map, $\sigma - \tau$ denotes the pointwise difference and the weight function is defined by $\omega_\Star: \Pi(\sigma - \tau) \mapsto \omega(\sigma)$.
 
 The \emph{recession cone} of a polyhedron $\sigma \subseteq V$ is the set
$$\textnormal{rec}(\sigma) := \{v \in V; \exists x \in \sigma \textnormal{ such that } x + \R_{\geq 0} v \subseteq \sigma\}.$$
If $X$ is a tropical cycle, then by \cite{rdiss}*{Lemma 1.4.10} there exists a refinement $\curly{X}$ of its polyhedral structure such that $\delta(X) := \{\textnormal{rec}(\sigma); \sigma \in \curly{X}\}$ is a polyhedral fan (One can use a construction similar to the one used for defining push-forwards). If we define a weight function 
$$\omega_\delta(\textnormal{rec}(\sigma)) := \sum_{\sigma': \textnormal{rec}(\sigma') = \textnormal{rec}(\sigma)} \omega_X(\sigma'),$$
then $(\delta(X),\omega_\delta)$ is a tropical cycle by \cite{rdiss}*{Theorem 1.4.12}.

We call two tropical cycles \emph{rationally equivalent} if $\delta(X) = \delta(Y)$ (up to refinement, of course).
 
 Let $(X,\omega_X)$ be a tropical cycle. A \emph{rational function} on $X$ is a function $\varphi: X \to \R$ that is piecewise affine linear with integer slopes with respect to some polyhedral structure $\curly{X}_\varphi$ of $X$. 
  
  The \emph{divisor} of $\varphi$ is the tropical cycle $\varphi \cdot X := (\curly{Y},\omega_\varphi)$, with $\curly{Y}$ the codimension one skeleton of $\curly{X}_\varphi$ and 
  $$\omega_\varphi(\tau) := \sum_{\sigma > \tau} \omega_X(\sigma) \varphi_\sigma(u_{\sigma/\tau}) - \varphi_\tau \left( \sum_{\sigma > \tau} \omega_X(\sigma) u_{\sigma/\tau} \right),$$
  where $\varphi_\sigma, \varphi_\tau$ denote the linear part of the function restricted to the corresponding cell.
  
    A \emph{morphism} of tropical cycles $f: X \to Y$ is a map from $\abs{X}$ to $\abs{Y}$ which is locally a linear map and respects the underlying lattice, i.e.\ maps $\Lambda_X$ to  $\Lambda_Y$.

The \emph{push-forward} of $X$ is defined as follows: by \cite{gkmmoduli}*{Construction 2.24} there exists a refinement $\curly{X}$ of the polyhedral structure on $X$ such that $\{f(\sigma); \sigma \in \curly{X}\}$ is a polyhedral complex. We then set
$$ f_*(X) = \{f(\sigma); \sigma \in \curly{X}; f \textnormal{ injective on } \sigma\} \;$$
with weights
 $$\omega_{f_*(X)}(f(\sigma)) = \sum_{\sigma': f(\sigma') = f(\sigma)} \abs{\Lambda_{f(\sigma)} / f(\Lambda_{\sigma'})} \omega_X(\sigma').$$
It is shown in \cite{gkmmoduli}*{Proposition 2.25} that this yields a tropical cycle and does not depend on the choice of $\curly{X}$.
 
 If $f: X \to Y$ is a morphism of tropical cycles and $\varphi$ is a rational function on $Y$, then $f^*\varphi = \varphi \circ f$ is the \emph{pull-back} of $\varphi$ via $f$. 
\end{defn}

\subsubsection{Connectedness and irreducibility}

\begin{defn}
 A tropical cycle $X$ is \emph{connected in codimension one}, if for any two maximal cells  $\sigma, \sigma'$ there exists a sequence of maximal cells $\sigma = \sigma_0,\dots,\sigma_r = \sigma'$, such that two subsequent cells $\sigma_i, \sigma_{i+1}$ intersect in codimension one (It is easy to see that this does not depend on the actual choice of polyhedral structure).
 
 We call $X$ \emph{irreducible}, if any $(\dim X)$-dimensional subcycle $Y$ (i.e.\ a tropical cycle with $\abs{Y} \subseteq \abs{X}$) is an integer multiple of $X$.
 
 We call $X$ \emph{weakly irreducible} if $X$ is an integer multiple of an irreducible cycle.
  
\end{defn}

\begin{remark}
 We can measure irreducibility of a tropical cycle $X$ by computing its \emph{weight lattice} $\Omega_X$: this is the lattice of weight functions making it balanced. It has been shown in \cite{hatint} that this does not depend on the choice of polyhedral structure and that $(X,\omega)$ is irreducible if and only if the rank of $\Omega_X$ and the greatest common divisor of all weights $\omega(\sigma)$ are both 1. $\Omega_X$ can be computed as the common solutions of all local balancing equations, which in turn can be interpreted as linear equations in the space of weight functions.

 Somewhat contrary to the terminology, connectedness should probably be considered the \enquote{tropicalization} of irreducibility in the algebraic setting. It was shown in \cite{cpconnectivity} that the tropicalization of any irreducible variety over an algebraically closed field is connected in codimension one. This property is also interesting from a computational point of view: roughly speaking, a connected complex can be computed by starting with a single maximal cell and recursively computing maximal cells that are attached to codimension one faces. This often provides a more efficient approach (see \cite{bjsstcomputingtropical} for an example).
 
 It is not as easy to find an analogue for tropical irreducibility. By \cite{mstropicalbook}*{Theorem 6.7.5}, the weight lattice of a $d$-dimensional complex $\Sigma$ in $\R^n$ is in bijection to $A_{n-d}(X_\Sigma)$. From a purely tropical point of view, irreducibility is a helpful property if you want to show equality of cycles, as one then only needs to prove one inclusion.
 
 Connectedness in codimension one is clearly a necessary condition for irreducibility. Together with local irreducibility we obtain a sufficient criterion:
 
 \begin{prop}[This is an easy generalization of {\cite{rdiss}*{Lemma 1.2.29}}]\label{intro_prop_irred}
  Let $X$ be a tropical cycle. If $X$ is locally (weakly) irreducible (i.e.\ $\Star_X(\tau)$ is (weakly) irreducible for each codimension one face $\tau$) and $X$ is connected in codimension one, then $X$ is (weakly) irreducible.
 \end{prop}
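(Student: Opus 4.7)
The plan is to show that any $d$-dimensional subcycle $Y$ of $X$ (with $d = \dim X$) is a constant multiple of $X$ by comparing weight functions cell by cell and transporting ratios along codimension-one adjacencies.

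First I would fix a common polyhedral structure for $X$ and $Y$, so that the weight function $\omega_Y$, extended by zero on maximal cells of $X$ not contained in $\abs{Y}$, becomes a balanced $\Z$-valued weighting on the complex $X$. Then I would argue locally: for each codimension-one face $\tau$, restricting $\omega_Y$ to the maximal cells through $\tau$ yields a balanced weight function on the same underlying complex as $\Star_X(\tau)$. The hypothesis of local (weak) irreducibility then produces a scalar $c_\tau$---integral in the irreducible case, rational in the weakly irreducible one---such that $\omega_Y(\sigma) = c_\tau\,\omega_X(\sigma)$ for every maximal cell $\sigma$ containing $\tau$.

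Next I would globalize using connectedness in codimension one: if $\sigma, \sigma'$ are adjacent maximal cells sharing a codimension-one face $\tau$, then $\omega_Y(\sigma)/\omega_X(\sigma) = c_\tau = \omega_Y(\sigma')/\omega_X(\sigma')$, and chaining through the connectivity hypothesis forces this ratio to be a single constant $c$ on all of $X$, giving $\omega_Y = c\,\omega_X$. In the irreducible case $c \in \Z$, so $Y = cX$ and $X$ is irreducible.

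For the weakly irreducible case one only obtains $c \in \Q$, so I would conclude by writing $\omega_X = g\,\omega_{X'}$, where $g$ is the gcd of the weights of $X$, and verifying that $X' := (\abs{X}, \omega_{X'})$ is genuinely irreducible: any integer balanced weighting on $X'$ is also an integer balanced weighting on $X$, hence by the previous step of the form $c\,\omega_X = cg\,\omega_{X'}$, and primitivity of $\omega_{X'}$ forces $cg \in \Z$. Then $X = g\,X'$ exhibits $X$ as an integer multiple of an irreducible cycle. The main point needing care is precisely this weakly irreducible step: one has to check that the local scalars $c_\tau$ are really rational numbers that patch consistently, and that the quotient cycle $X'$ lands in the irreducible class rather than only the weakly irreducible one; both hinge on every local weight lattice having rank one.
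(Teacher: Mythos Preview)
The paper does not actually supply a proof of this proposition; it merely records it as an easy generalization of \cite{rdiss}*{Lemma 1.2.29} and moves on. Your argument is correct and is precisely the standard one: balanced weightings restrict to balanced weightings on each $\Star_X(\tau)$, local rank-one-ness of the weight lattice forces the ratio $\omega_Y/\omega_X$ to be constant along each codimension-one face, and connectedness in codimension one propagates this constant globally. Your handling of the weakly irreducible case---dividing by the gcd and checking that the resulting primitive cycle is genuinely irreducible because its weight lattice still has rank one---is also sound and matches the irreducibility criterion the paper quotes from \cite{hatint} in the remark preceding the proposition.
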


 \end{remark}

 \subsubsection{Tropical rational curves, moduli spaces and Psi classes}
 We only present the basic notations and definitions related to tropical moduli spaces. For more detailed information, see for example \cite{gkmmoduli}.

\begin{defn}
 An $n$-\emph{marked rational tropical curve} is a metric tree with $n$ unbounded edges, labeled with numbers $\{1,\dots,n\}$, such that all vertices of the graph are at least trivalent. We can associate to each such curve $C$ its metric vector $(d(C)_{i,j})_{i < j} \in \R^{\binom{n}{2}}$, where $d(C)_{i,j}$ is the distance between the unbounded edges (called \emph{leaves}) marked $i$ and $j$ determined by the metric on $C$.

Define $\Phi_n: \R^n \to \R^{\binom{n}{2}}, a \mapsto (a_i + a_j)_{i < j}$. Then 
$$\mk{n} := \{d(C); C \textnormal{ $n$-marked curve}\} \subseteq \R^{\binom{n}{2}} / \Phi_n(\R^n)$$
is the \emph{moduli space} of $n$-marked rational tropical curves.
\end{defn}

\begin{remark}\label{remark_moduli_defn}
The space $\mk{n}$ is also known as the \emph{space of phylogenetic trees} \cite{sstropgrass}. It is shown (e.g. in \cite{gkmmoduli}) that $\mk{n}$ is a pure $(n-3)$-dimensional fan and if we assign weight 1 to each maximal cone, it is balanced (though \cite{gkmmoduli} does not use the standard lattice, as we will see below). Points in the interior of the same cone correspond to curves with the same \emph{combinatorial type}: the combinatorial type of a curve is its equivalence class modulo homeomorphisms respecting the labelings of the leaves, i.e.\ morally we forget the metric on each graph. In particular, maximal cones correspond to curves where each vertex is exactly trivalent. We call this particular polyhedral structure on $\mk{n}$ the \emph{combinatorial subdivision}.

The lattice for $\mk{n}$ under the embedding defined above is generated by the rays of the fan. These correspond to curves with exactly one bounded edge. Hence each such curve defines a partition or \emph{split} $I \vert I^c$ on $\{1,\dots,n\}$ by dividing the set of leaves into those lying on the \enquote{same side} of $e$. We denote the resulting ray by $v_I$ (note that $v_I = v_{I^c}$). Similarly, given any rational $n$-marked curve, each bounded edge $E_i$ of length $\alpha_i$ induces some split $I_i \vert I_i^c, i=1,\dots d$ on the leaves. In the moduli space, this curve is then contained in the cone spanned by the $v_{I_i}$ and can be written as $\sum \alpha_i v_{I_i}$. In particular, $\mk{n}$ is a simplicial fan.

There are several reasons why $\mk{n}$ should be considered the tropical analogue of $M_{0,n}$, the algebraic space of rational $n$-marked curves. Perhaps easiest to see is the fact that there is a one-to-one, dimension-reversing relation between combinatorial types of tropical rational curves and boundary strata of $\overline{M}_{0,n}$. Each boundary stratum corresponds to a nodal curve $X$, to which we can assign a \emph{dual graph}. This is a graph which has a vertex for each component of $X$, a bounded edge for each node and an unbounded leaf for each marked point. 

A much stronger relation was proven in \cite{gmequations}, where it is shown that (for the right embedding), the tropicalization of $M_{0,n}$ is $\mk{n}$ and the closure of $M_{0,n}$ in the toric variety $X(\mk{n})$ is $\overline{M}_{0,n}$ (i.e.\ $\overline{M}_{0,n}$ is a \emph{tropical compactification}).

\end{remark}

\begin{defn}\label{intro_def_psiclasses}
 Let $n \geq 3$ and $i \in \{1,\dots,n\}$. The $i$-th \emph{Psi class} is the subset $\psi_i$ of $\mn$, consisting of the locus of all $n$-marked curves such that the $i$-th leaf is attached to a vertex that is at least fourvalent. 
\end{defn}

\begin{remark}
 In the combinatorial subdivision of $\mn$, $\psi_i$ is actually a codimension one sub\emph{fan} and assigning weight 1 to each maximal cone produces a tropical variety. Tropical Psi classes were first defined by Mikhalkin in \cite{mmodulicurves}, as a direct translation of the classical definition. In \cite{kmpsiclasses}, the authors define Psi classes as divisors of rational functions on $\mn$ and give a complete combinatorial description of all products of Psi classes.
\end{remark}

\subsubsection{Tropical stable maps}\label{section_hurwitz_stable}

To study covers of $\R$ in tropical geometry, we will need a tropical space of stable maps. A precise definition can be found in \cite{gkmmoduli}*{Section 4}. For shortness, we will use their result from Proposition 4.7 as definition and explain the geometric interpretation behind it afterwards.

\begin{defn}
 Let $m \geq 4, r \geq 1$. For any $\Delta = (v_1,\dots,v_n), v_i \in \R^r$ with $\sum v_i = 0$ we denote by 
$$\mk{m}(\R^r,\Delta) := \mk{n+m} \times \R^r$$
the \emph{space of stable $m$-pointed maps of degree $\Delta$}.
\end{defn}

\begin{remark}\label{remark_basics_maps}
 An element of $\mk{m}(\R^r,\Delta)$ represents an $(n+m)$-marked abstract curve $C$  together with a continuous, piecewise integer affine linear (with respect to the metric on $C$) map $h: C \to \R^r$. We label the first $n$ leaves by $\{1,\dots,n\}$ and require $h$ to have slope $v_1,\dots,v_n$ on them. The last $m$ leaves we denote by $l_0,\dots,l_{m-1}$. These are contracted to a point under $h$. Since we want the image curve to be a tropical curve in $\R^r$, the slope on the bounded edges is already uniquely defined by the condition that the outgoing slopes of $h$ at each vertex have to add up to 0. This defines the map $h$ up to a translation in $\R^r$. The translation is fixed by the $\R^r$-coordinate, which can for example be interpreted as the image of the first contracted end $l_0$ under $h$ (see figure  \ref{prelim_fig_stable} for an example). There are obvious evaluation maps $\ev_i: \mk{m}(\R^r,\Delta) \to \R^r, i = 0,\dots,m-1$, mapping a stable map to $h(l_i)$. \cite{gkmmoduli}*{Proposition 4.8} shows that these are morphisms. Similarly, there is a \emph{forgetful morphism} $\ft: \mk{m}(\R^r,\Delta) \to \mk{n}$, forgetting the contracted ends and the map $h$. 

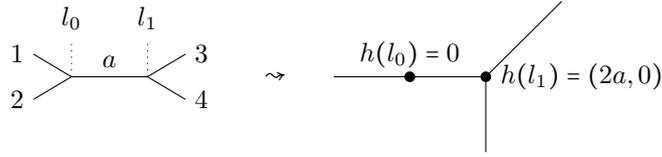
\begin{figure}[ht]
 \centering
 \begin{tikzpicture}
  \matrix[column sep = 5mm]{
	\draw (0,0) -- (1,0);
	\draw (0.5,0) node[above]{$a$};
	\draw[dotted] (0,0) -- (0,0.5) node[above]{$l_0$};
	\draw[dotted] (1,0) -- (1,0.5) node[above]{$l_1$};
	\draw (0,0) -- (-0.5,0.3) node[left]{1};
	\draw (0,0) -- (-0.5,-0.3) node[left]{2};
	\draw (1,0) -- (1.5,0.3) node[right]{3};
	\draw (1,0) -- (1.5,-0.3) node[right]{4}; &
	\draw (0,0) node{$\leadsto$}; &
	\draw (0,0) -- (-2,0);
	\draw (0,0) -- (1,1);
	\draw (0,0) -- (0,-1);
	\fill[black] (-1,0) circle (2pt) node[above]{$h(l_0) = 0$};
	\fill[black] (0,0) circle (2pt) node[right = 2pt]{$h(l_1) = (2a,0)$};
	\\
      };
 \end{tikzpicture}
\caption{On the left the abstract 6-marked curve $\Gamma = a\cdot v_{\{1,2,l_0\}}$. If we pick $\Delta = ( (-1,0),(-1,0),(2,2), (0,-2))$ and fix $h(l_0) = 0$ in $\R^2$, we obtain the curve on the right hand side as $h(\Gamma)$.}\label{prelim_fig_stable}
\end{figure}
\end{remark}

%
%
\newpage
\subsection{Hurwitz cycles}\label{section_prelim_hurwitz}

\subsubsection{Algebraic Hurwitz cycles}\label{section_hurwitz_algebraic}

We will only briefly cover algebraic Hurwitz cycles, as we will be working exclusively on the tropical side. For a more in-depth discussion of its definition and properties, see for example \cites{bcmhurwitz, gvrelativevirtual}.

Let $n \geq 4$. We define 

$$\curly{H}_n := \left\{x \in \Z^n: \sum_{i=1}^n x_i = 0 \right\} \wo \{0\}.$$

Let $x \in \curly{H}_n$ and choose distinct points $p_0,\dots,p_{n-3-k} \in \P^1 \wo \{0,\infty\}$. The \emph{double Hurwitz cycle} $\mathbb{H}_k(x)$ is a $k$-dimensional cycle in the moduli space of rational $n$-marked curves $\overline{M}_{0,n}$. It parametrizes curves $C$ that \emph{allow} covers $C \stackrel{\pi}{\to} \P^1$ with the following properties:
\begin{itemize}
 \item $C$ is a smooth connected rational curve.
 \item $\pi$ has ramification profile $x^+ := (x_i; x_i > 0)$ over 0 and ramification profile $x^- := (x_i; x_i < 0)$ over $\infty$. The corresponding ramification points are the marked points of $C$.
 \item $\pi$ has simple ramification over the $p_i$ and at most simple ramification elsewhere.
\end{itemize}

The precise definition \cite{bcmhurwitz}*{Section 3} actually involves some moduli spaces. For the sake of simplicity, we will just cite the following result, that can be taken as a definition throughout this paper.

\begin{lemma}[{\cite{bcmhurwitz}*{Lemma 3.2}}]\label{intro_hurwitz_def}
 $$\mathbb{H}_k(x) = st_*\left( \prod_{i=1}^{n-2-k} \psi_i \ev_i^*([pt])\right),$$
where 
\begin{itemize}
 \item the intersection product is taken in $\overline{M}_{0,n-2-k}(x)$, the space of relative stable maps to $\P^1$ with ramification profile $x^+,x^-$ over $0$ and $\infty$ (see also \cite{gvrelativevirtual} for a definition. In their language, this is the space of maps to a rigid target).
 \item $st: \overline{M}_{0,n-2-k}(x) \to \overline{M}_{0,n}$ is the morphism forgetting the map and all marked points but the ramification points over $0$ and $\infty$ (and stabilizing the result by contracting components that become unstable, i.e.\ contain less than three special points).
\end{itemize}

\end{lemma}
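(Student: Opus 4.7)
The plan is to unpack the statement as the assertion that cutting out the Hurwitz locus by hand---that is, by requiring $n-2-k$ of the additional simple branch points to lie at specified positions $p_1,\dots,p_{n-2-k}$---can be translated into the intersection-theoretic language of $\psi$ classes and evaluation pull-backs on the relative stable map space $\overline{M}_{0,n-2-k}(x)$. So the first step is to recall the construction of this space: its points parametrize degree-$d$ covers $\pi: C \to \P^1$ with prescribed ramification profile $x^+$ over $0$, $x^-$ over $\infty$, and $n-2-k$ extra marked preimages of unspecified branch points. The dimension count shows this space has the expected dimension $n-2-k$, matching the number of moving branch conditions.

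Second, I would analyze the branch morphism $\mathrm{br}_i$ sending a stable map to the image in $\P^1$ of its $i$-th marked (potential) ramification point; standardly one identifies $\mathrm{br}_i = \ev_i$ on this moduli. Imposing $\mathrm{br}_i(\pi) = p_i$ for each $i$ is then tautologically the intersection $\prod_i \ev_i^*([p_i])$. Since the $p_i$ are generic and distinct points in $\P^1 \setminus\{0,\infty\}$, these conditions cut out a zero-dimensional set on the fibers of $st$ in a transverse manner, so no excess contribution appears. At this stage the key issue is verifying that the marked point at $p_i$ is actually a ramification point, rather than a generic preimage: this is exactly where the $\psi_i$ factor enters. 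The well-known identity on moduli of (relative) stable maps is that $\psi_i \cdot \ev_i^*([p_i])$ encodes the locus where $\pi$ has a node, or equivalently simple ramification, at the $i$-th marked point. I would derive this from the standard comparison $\psi_i = $ (relative cotangent class at $l_i$), whose vanishing forces the differential $d\pi$ at $l_i$ to drop rank generically, producing the required ramification.

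Third, I would show that the remaining simple branch points (which we do not fix) combine with the ramification profiles $x^\pm$ over $0,\infty$ to account for all the critical values of $\pi$ via Riemann--Hurwitz, so that restricting to the intersection $\prod \psi_i \ev_i^*([pt])$ really gives precisely the curves admitting a cover of the desired type. Pushing forward along $st$ then forgets the map $\pi$ and all marked points except the preimages of $0$ and $\infty$, stabilizing if needed, which gives a $k$-dimensional cycle on $\overline{M}_{0,n}$ parametrizing exactly those curves that \emph{admit} such a cover---this is the definition of $\mathbb{H}_k(x)$.

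The main obstacle is the bookkeeping at the boundary of $\overline{M}_{0,n-2-k}(x)$: one must check that the cycle-theoretic intersection $\prod \psi_i \ev_i^*([pt])$ has no spurious components coming from degenerations where the marked point $l_i$ bubbles off onto a contracted component, and that the multiplicities produced by $st_*$ (counting automorphisms of stable maps with the forgotten data) match the enumerative multiplicities of covers. This is where the genericity of the $p_i$ is essential, and where the careful verification in \cite{bcmhurwitz}*{\S3} does the hard work. Consequently, my proposal here should be viewed as an outline of the geometric content of the statement, with the delicate moduli-theoretic transversality arguments deferred to the cited reference.
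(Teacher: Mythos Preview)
The paper does not prove this statement at all: it is quoted directly from \cite{bcmhurwitz}*{Lemma 3.2} and explicitly offered as something that ``can be taken as a definition throughout this paper.'' There is therefore no proof in the paper to compare your proposal against. Your outline is a reasonable sketch of the geometric content behind the identity, and you yourself correctly note at the end that the delicate transversality and boundary arguments are deferred to the cited reference---which is exactly what the present paper does as well.
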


\subsubsection{Tropical Hurwitz cycles}\label{section_hurwitz_tropical}

We already have all ingredients at hand to \enquote{tropicalize} Lemma \ref{intro_hurwitz_def}. Note that a point $q \in \R$ can be considered as the divisor of the tropical polynomial $\max\{x,q\}$, so it can be pulled back along a morphism to $\R$. Also, as $\mk{n+m}$ is a subcycle of $\mk{m}(\R^r,\Delta) = \mk{n+m} \times \R^r$, we can define Psi classes on the latter: for $i = 0,\dots m-1$, we define 
$$\Psi_i := \psi(l_i) \times \R^r,$$
where $\psi(l_i)$ is the Psi class of $\mk{n+m}$ associated to the leaf $l_i$ we defined in \ref{intro_def_psiclasses}. 

\begin{defn}[{\cite{bcmhurwitz}*{Definition 6}}]
 Let $x \in \Z^n \wo \{0\}$ with $\sum x_i = 0, k \geq 0$ and $N := n-2-k$. Choose $p := (p_0,\dots,p_{N-1}), p_i \in \R$. We define the \emph{tropical marked Hurwitz cycle}
$$\hwt{k}(x,p) := \left(\prod_{i=0}^{N-1} (\Psi_i \ev_i^*(p_i))\right)\cdot \mk{N}(\R,x) $$
We then define the \emph{tropical Hurwitz cycle} 
$$\hw{k}(x,p) := \ft_*(\hwt{k}(x,p)) \subseteq \mk{n}.$$
\end{defn}

\begin{remark}
In \cite{cjmhurwitz} the authors show that Hurwitz numbers can be considered as a weighted count of tropical covers of $\R$, which are monodromy graphs of algebraic covers. In particular, the ramification profile over 0 and $\infty$ appears on the tropical side as the slopes of the ends going to $\pm \infty$. Thus a tropical analogue of a cover with prescribed ramification profile $x$ is an element of $\mk{N}(\R,x)$. Hence the above definition becomes the exact analogue of Lemma \ref{intro_hurwitz_def} and gives us $k$-dimensional tropical cycles $\hwt{k}(x,p), \hw{k}(x,p)$. While it formally depends on the choice of the $p_j$, two different choices $p,p'$ lead to rationally equivalent cycles $\hw{k}(x,p) \sim \hw{k}(x,p')$. The reason for this is that any two points in $\R$ are rationally equivalent and this is compatible with pullbacks and taking intersection products. In particular, if we choose all $p_i$ to be equal (e.g.\ equal to $0$), we obtain fans, which we denote by $\hwt{k}(x)$ and $\hw{k}(x)$. They are obviously the recession fans of $\hwt{k}(x,p),\hw{k}(x,p)$ for any $p$.
\end{remark}

\begin{ex}\label{hurwitz_ex_cover}
 Let us now see what kind of object these Hurwitz cycles represent. As discussed in Remark \ref{remark_basics_maps}, for any fixed $x$ and any $n$-marked curve $C$ we obtain a map $h: C \to \R$ up to translation. To determine such a map, we have to fix an orientation of each edge and leaf of $C$ and an integer slope along this orientation. In informal terms, the orientation determines how we position an edge or leaf on $\R$ (the \enquote{tip} of the arrow points towards $+ \infty$). The slope can then be seen as a stretching factor.

The orientation of each leaf $i$ is chosen so that it \enquote{points away} from its vertex if and only if $x_i > 0$. We define its slope to be $\abs{x_i}$. Any bounded edge $e$ induces a split $I_e$. Its slope is $\abs{x_{I_e}}$, where $x_{I_e} = \sum_{i \in I_e} x_i$. We pick the orientation such that at each vertex the sum of slopes of incoming edges is the sum of slopes of outgoing edges (it is not hard to see that such an orientation exists and must be unique).

As we discussed before, we can fix the translation of $h$ by requiring the image of any of its vertices $q$ to be some $\alpha \in \R$. Denote by $h(C,q,\alpha): C \to \R$ the corresponding map. Figure \ref{figure_intro_cover} gives two examples of this construction.
%
%
 \begin{figure}[ht]
  \centering
 \begin{tikzpicture}
   \matrix[column sep = 2mm,row sep = 5mm, ampersand replacement=\&]{
    \draw (-1,0) node[left = 30pt]{$C := $};
    \draw[font=\itshape,<-, shorten <= 2pt] (-1,0) -- (0,0);
    \draw[font=\itshape,<-, shorten <= 2pt] (0,0) -- (1,0) ;
    \draw[->] (-1,0)  -- (-1.5,0.3) node[left]{1};
    \draw (-0.5,0) node[above, font=\itshape] {\tiny $\substack{l = 1/2 \\ \omega = 2}$};
    \draw (0.5,0) node[above,font=\itshape] {\tiny $\substack{l = 1/3 \\ \omega = 3}$};
    \draw[->] (-1,0) -- (-1.5,-0.3) node[left]{2};
    \draw[->] (0,0) -- (0,0.5) node[above]{3};
    \draw[->] (1,0) -- (1.5,0.3) node[right]{4};
    \draw[<-, shorten <= 2pt] (1,0) -- (1.5,-0.3) node[right]{5}; 
    \fill[red] (0,0) circle (2pt) node[below = 5pt] {$q$}; 
    \fill[darkgreen] (-1,0) circle (2pt) node[below = 5pt] {$v_1$}; 
    \fill[blue] (1,0) circle (2pt) node[below = 5pt] {$v_2$}; \&
    \draw[font=\itshape,<-, shorten <= 2pt] (-1,0)  -- (0,0);
    \draw[font=\itshape,->, shorten >= 2pt] (0,0) -- (1,0) ;
    \draw (-0.5,0) node[above,font=\itshape] {\tiny $\substack{l = 1 \\ \omega = 2}$};
    \draw (0.5,0) node[above,font=\itshape] {\tiny $\substack{l = 1 \\ \omega = 2}$};
    \draw[->] (-1,0) -- (-1.5,0.3) node[left]{1};
    \draw[->] (-1,0) -- (-1.5,-0.3) node[left]{2};
    \draw[<-,shorten <= 2pt]  (0,0) -- (0,0.5) node[above]{5};
    \draw[->]  (1,0) -- (1.5,0.3) node[right]{3};
    \draw[->]  (1,0) -- (1.5,-0.3) node[right]{4}; 
    \fill[darkgreen] (0,0) circle (2pt) node[below = 5pt] {$v_1$};
    \fill[red] (-1,0) circle (2pt) node[below = 5pt] {$q$}; 
    \fill[blue] (1,0) circle (2pt) node[below = 5pt] {$v_2$};
    \draw (1,0) node[right = 30pt]{$ =: C'$};
    \\

    \draw (-2,0) -- (2,0);
    \fill[black] (0,0) circle (2pt) node[below]{\tiny $0$};
    \fill[black] (1,0) circle (2pt) node[below]{\tiny $1$};
    \fill[black] (-1,0) circle (2pt) node[below]{\tiny -1};
    \draw (-1,1) -- (1,1); 
    \draw (1,1) -- (2,1.1) node[right]{\tiny 1};
    \draw (1,1) -- (2,0.9) node[right]{\tiny 2};
    \draw (0,1) -- (2,0.7) node[right]{\tiny 3};
    \draw (-1,1) -- (2,0.5) node[right]{\tiny 4};
    \draw (-1,1) -- (-2,1.1) node[left]{\tiny 5}; 
    \fill[red] (0,1) circle (2pt) node[above]{\tiny $q$};
    \fill[blue] (-1,1) circle (2pt) node[above]{\tiny $v_2$};
    \fill[darkgreen] (1,1) circle (2pt) node[above]{\tiny $v_1$};
    \&
    \draw (-2,0) -- (2,0);
    \fill[black] (0,0) circle (2pt) node[below]{\tiny -1};
    \fill[black] (1,0) circle (2pt) node[below]{\tiny $0$};
    \fill[black] (-1,0) circle (2pt) node[below]{\tiny -2};
    \draw (1,1) -- (2,1.1) node[right]{\tiny 1};
    \draw (1,1) -- (2,0.9) node[right]{\tiny 2};
    \draw (1,0.5) -- (2,0.6) node[right]{\tiny 3};
    \draw (1,0.5) -- (2,0.4) node[right]{\tiny 4};
    \draw (1,1) -- (-1,0.75) -- (1,0.5);
    \draw (-1,0.75) -- (-2,0.75) node[left]{\tiny 5};
    \fill[red] (1,1) circle (2pt) node[above]{\tiny $q$};
    \fill[blue] (1,0.5) circle (2pt) node[above]{\tiny $v_2$};
    \fill[darkgreen] (-1,0.75) circle (2pt) node[above]{\tiny $v_1$};
\\
   };
 \end{tikzpicture}
 \caption{The covers defined by two 5-marked rational curves after fixing the image of a vertex $q$ to be $\alpha = 0$. We chose $x = (1,1,1,1,-4)$ and denoted edge lengths by $l$, edge slopes by $\omega$. }\label{figure_intro_cover}
\end{figure}
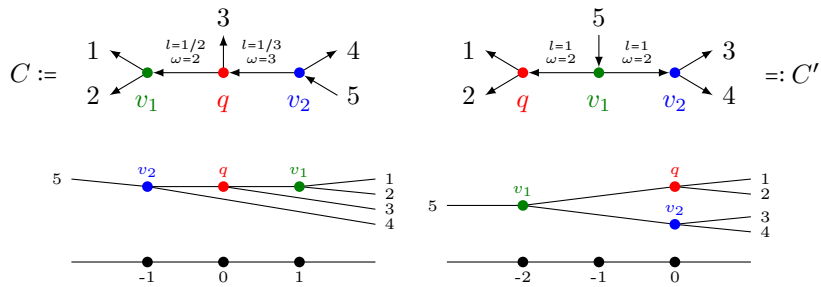

Now choose $p_0,\dots,p_{N-1} \in \R$. Then $\hw{k}(x,p)$ is (set-theoretically) the set of all curves $C$, where we can find vertices $q_0, \dots,q_{N-1}$ (each vertex $q$ can be picked a number of times equal to $\val(q) -2$), such that $h(C,q_0,p_0)(q_l) = p_l$ for all $l$, i.e.\ all curves that \emph{allow} a cover with fixed images for some of its vertices. E.g.\ in Figure \ref{figure_intro_cover}, we have
\begin{itemize}
 \item $C \in \hw{1}(x, p = (0,1))$, but $C \notin \hw{1}(x, p = (0,0))$.
 \item $C' \in \hw{1}(x, p = (0,0))$, but $C' \notin \hw{1}(x, p = (0,1))$.
\end{itemize}
In particular, if we choose $p_i = 0$ for all $i$, $\hw{k}(x,p)$ is the set of all curves, such that $n-2-k$ of its vertices have the same image (again, counting higher-valent vertices $v$ with multiplicity $\val(v)-2$).

Of course there may be several possible choices of vertices that are compatible with $p$. In $\hwt{k}(x,p)$, we fix a choice by attaching the contracted end $l_i$ to the vertex we wish to be mapped to $p_i$. I.e.\ $\hwt{k}(x,p)$ is the set of all curves $C$, such that $l_0,\dots,l_{N-1}$ are attached to vertices and such that in the corresponding cover the vertex with leaf $l_i$ is mapped to $p_i$. For example, in Figure \ref{figure_intro_cover} on the left hand side there are two possible choices of vertices that are compatible with $p = (0,1)$. Hence there are two preimages in $\hwt{k}(x,p)$ corresponding to attaching the contracted leaves $l_0,l_1$ either to $q$ and $v_1$ or to $v_2$ and $q$.
\end{ex}

\begin{remark}\label{hurwitz_remark_weights}
 Let us see how the weight of a cell of $\hw{k}(x)$ is computed if we choose the $p_i$ to be generic, i.e.\ pairwise different. Let $\tau$ be a maximal cell of $\hw{k}(x)$ and $C$ the curve corresponding to an interior point of $\tau$. Then $\tau$ must lie in the interior of a maximal cell $\sigma$ of $\mk{n}$ and for a generic choice of $C$ there is a unique choice of vertices $q_0,\dots,q_{N-1}$ compatible with the $p_i$ (which fixes a cover). Marking these vertices accordingly, we can consider $\sigma$ as a cone in $\mk{N}(\R,x)$. We thus obtain well-defined and linear evaluation maps $\ev_i: \sigma \to \R$, mapping each curve in $\sigma$ to the image of the vertex $q_i$. Assume $\sigma$ is spanned by the rays $v_{I_1},\dots,v_{I_{n-3}}$, then we can write $\ev_i$ in the coordinates of these rays as $(a_1^i,\dots,a_{n-3}^i)$, where $a_k^i = \ev_i(v_{I_k})$. It is shown in \cite{bcmhurwitz}*{Lemma 4.4} that the weight of $\tau$ is then the greatest common divisor of the maximal minors of the matrix $(a_k^i)_{k,i}$.

 In the case that all $p_i$ are 0, we use the fact that $\hw{k}(x,p)$ is the recession fan of the Hurwitz cycle obtained for a generic choice of $p_i$. By its definition this means that the total weight of a cell $\tau$ is obtained as $$\omega(\tau) = \sum_{\tau \subseteq \sigma} \sum_{q_i} g_{\sigma,q_i},$$
where the first sum runs over all maximal cones $\sigma$ of $\mk{n}$ containing $\tau$, the second sum runs over all vertex choices $q_0,\dots q_{N-1}$ that are compatible in $\sigma$ with a \emph{generic} choice of $p_i$ and $g_{\sigma,q_i}$ is the gcd we obtained in the previous construction. In fact, one can easily see that the same method can be used for computing weights if only some of the $p_i$ are equal. 
\end{remark}

\subsection{Computation}\label{section_hurwitz_comp}

If we approach this naively, we already have everything at hand to compute at least marked Hurwitz cycles with \atint: \cite{hatint} tells us how to compute a product of Psi classes (without having to compute the ambient moduli space, which will be huge!) and then we only have to compute divisors of tropical polynomials on this product. However, this only works for small $k$, i.e.\ large codimension. Otherwise, the Psi class product will already be too large to make this computation feasible.

Also, we will mostly be interested in unmarked Hurwitz cycles and computing push-forwards is, computationally speaking, not desirable. One has to produce a very fine polyhedral structure to make sure that the images of the cones form a fan. The following approach to compute unmarked cycles directly proves to be more suitable:

Assume we want to compute $\hw{k}(x,p = (p_0,\dots,p_{N-1}))$ for $x \in \Z^n$. Fix a combinatorial type $C$ of a threevalent rational $n$-marked curve, i.e.\ a maximal cone $\sigma$ of $\mk{n}$. For each choice of distinct vertices $q_0,\dots,q_{N-1}$ of $C$, we obtain linear evaluation maps on $\sigma$, by considering it as a cone of stable maps, where the additional marked ends are attached to the $q_i$. We can now refine $\sigma$ by intersecting it with the fan $F_i$, whose maximal cones are $$F_i^+ := \{x \in \sigma: \ev_i(x) \geq p_i\}, F_i^- := \{x \in \sigma: \ev_i(x) \leq p_i\}.$$
Iterating over all possible choices of $q_i$, this will finally give us a subdivision $\sigma'$ of $\sigma$. The part of $\hw{k}(x,p)$ that lives in $\sigma$ is now a subcomplex of the $k$-skeleton of $\sigma'$: it consists of all $k$-dimensional cells $\tau$ of $\sigma'$ such that there exists a choice of vertices $q_i$ with the property that the corresponding evaluation maps fulfill $\ev_i(x) = p_i$ for all $x \in \tau$. The weight of such a $\tau$ can then be computed using the method described in Remark \ref{hurwitz_remark_weights}.

The full Hurwitz cycle can now be computed by iterating over all maximal cones of $\mk{n}$. This gives a feasible algorithm at least for $n \leq 8$ - after that, the moduli space itself becomes too large.

\begin{ex}
 We want to compute (part of) a Hurwitz cycle: we choose $k = 2,x = (2,2,6,-5,-4,-1)$ and $(p_0,p_1) = (0,1)$. Since the complete cycle would be rather large and difficult to visualize (3755 maximal cells living in $\R^9$), we only consider the part of $\hw{2}(x,p)$ lying in the three-dimensional cone of $\mk{6}$ corresponding to the combinatorial type
 $$C = v_{\{1,2\}} + v_{\{4,5,6\}} + v_{\{5,6\}}.$$
 Figure \ref{hurwitz_fig_subdiv} shows the corresponding cover, together with the part of the Hurwitz cycle we computed using the method described above. Each cell of the cycle is obtained by choosing specific vertices of $C$ for the additional marked points $p_0$ and $p_1$. The correspondence between these choices and the actual cells, together with the corresponding equation, is laid out in Figure \ref{hurwitz_fig_choices}. While there are of course in theory $4 \cdot 4 = 16$ possible choices, not all of them produce a cell: we only display choices of distinct vertices, such that the image of the vertex for $p_1 = 1$ is larger than the image of the vertex for $p_0 = 0$. This gives $\binom{4}{2} = 6$ valid choices.
 
 \begin{figure}[ht]
  \centering
  \begin{tabular}{c c}
   \includegraphics[scale = 0.25]{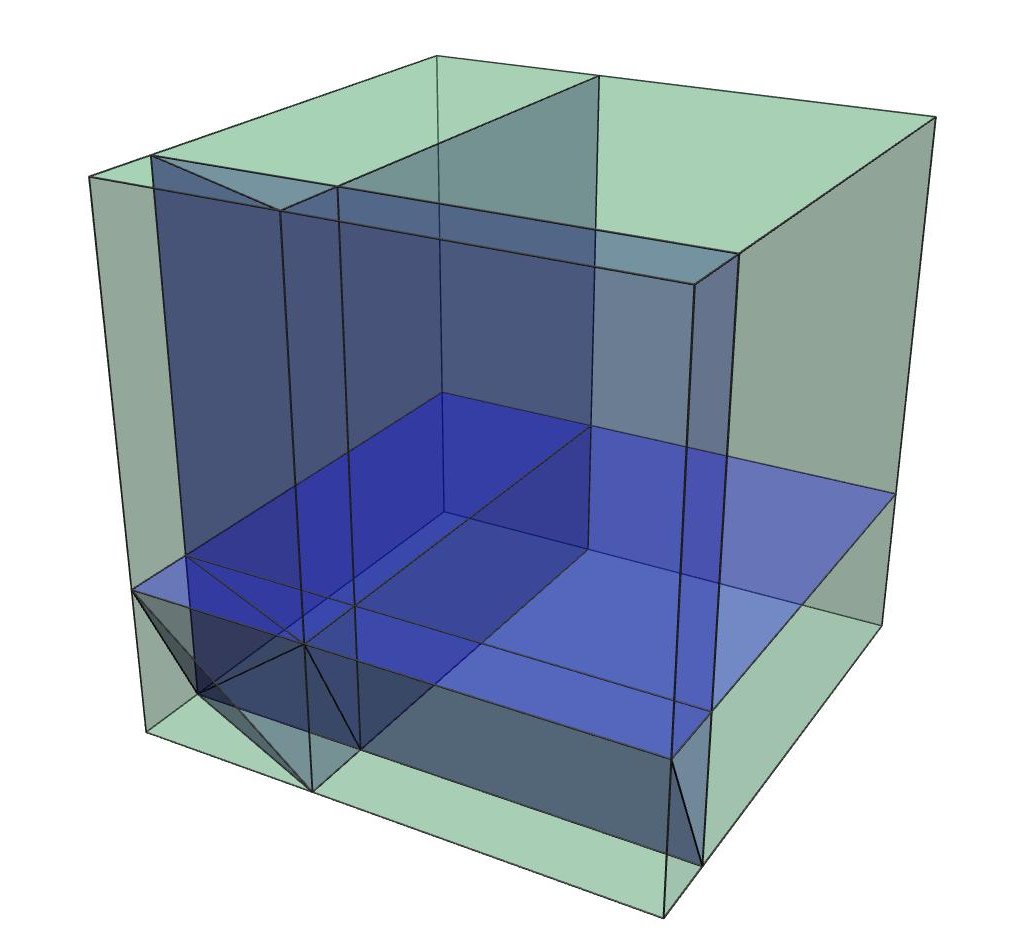} &
       \begin{tikzpicture}
   \draw[->] (0,0,0) -- (1,-0.3,0) node[right]{$\alpha$};
   \draw[->] (0,0,0) -- (0.5,0.5,0) node[right]{$\beta$};
   \draw[->] (0,0,0) -- (0,1,0) node[above] {$\gamma$};
   \draw[color = white] (0,0,0) -- (0,-3,0);
  \end{tikzpicture} 
   \\ 
   \noalign{\smallskip}
   \multicolumn{2}{c}{
   \begin{tikzpicture}
   \matrix[row sep=2mm,column sep=1.5mm,ampersand replacement=\&] {
      \draw (0,0) -- (1,0) -- (2,0) -- (3,0);
      \draw (0,0) -- (-0.5,-0.3) node[left]{5};
      \draw (0,0) -- (-0.5,0.3) node[left]{6};
      \draw (1,0) -- (1,0.5) node[above]{4};
      \draw (2,0) -- (2,0.5) node[above]{3};
      \draw (3,0) -- (3.5,0.3) node[right]{1};
      \draw (3,0) -- (3.5,-0.3) node[right]{2}; 
      \draw (0.5,0) node[below]{$\gamma$};
      \draw (1.5,0) node[below]{$\beta$};
      \draw (2.5,0) node[below]{$\alpha$};
      \&
      \draw[->] (0,0) -- (0.5,0); \&
      \draw (0,0) -- (1,0) -- (3,0) -- (3.8,0);
      \draw (0,0) -- (-1,0) node[left]{5};
      \draw (0,0) -- (-1,-0.3) node[left]{6};
      \draw (1,0) -- (-1,0.3) node[left]{4};
      \draw (3,0) -- (4.8,-0.3) node[right]{3};
      \draw (3.8,0) -- (4.8,0) node[right]{2};
      \draw (3.8,0) -- (4.8,0.3) node[right]{1};  
      \draw (0.5,0) node[below]{$5\gamma$};
      \draw (2,0) node[below]{$10\beta$};
      \draw (3.4,0) node[above]{$4\alpha$};
      \\
    };
  \end{tikzpicture}}
  \end{tabular}
  \caption{The cube represents the three-dimensional cone in $\mk{6}$ that corresponds to the combinatorial type $v_{\{1,2\}} + v_{\{4,5,6\}} + v_{\{5,6\}}$ drawn on the bottom left part of the picture. We denote the length of the interior edges by $\alpha, \beta, \gamma$ as indicated. The blue cells represent the Hurwitz cycle living in this cone. The bottom right figure indicates the corresponding cover. The parameters we chose here are $k = 2, x = (2,2,6,-5,-4,-1)$ and $(p_0,p_1) = (0,1)$.}\label{hurwitz_fig_subdiv}
 \end{figure}

\end{ex}

\begin{figure}[p]
\centering

 \begin{tabular}{|c | c |}
  \hline
  
  \includegraphics[scale=0.14]{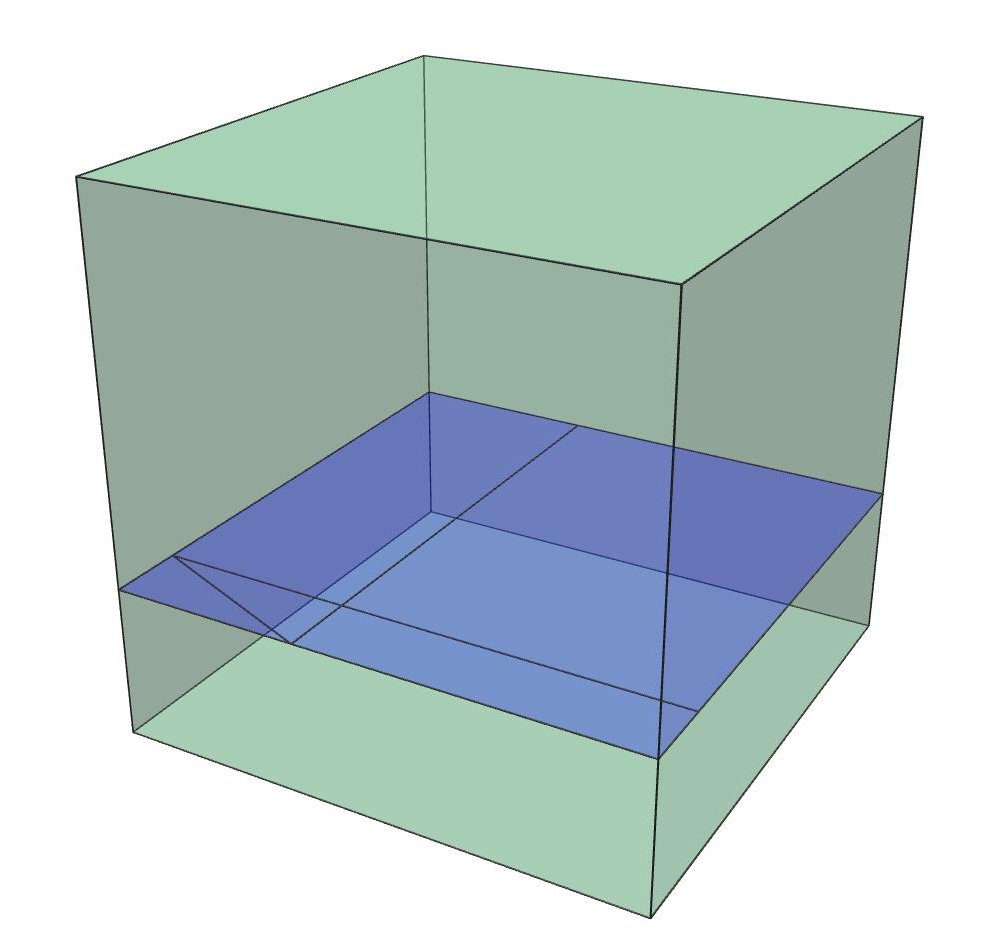} & \includegraphics[scale=0.14]{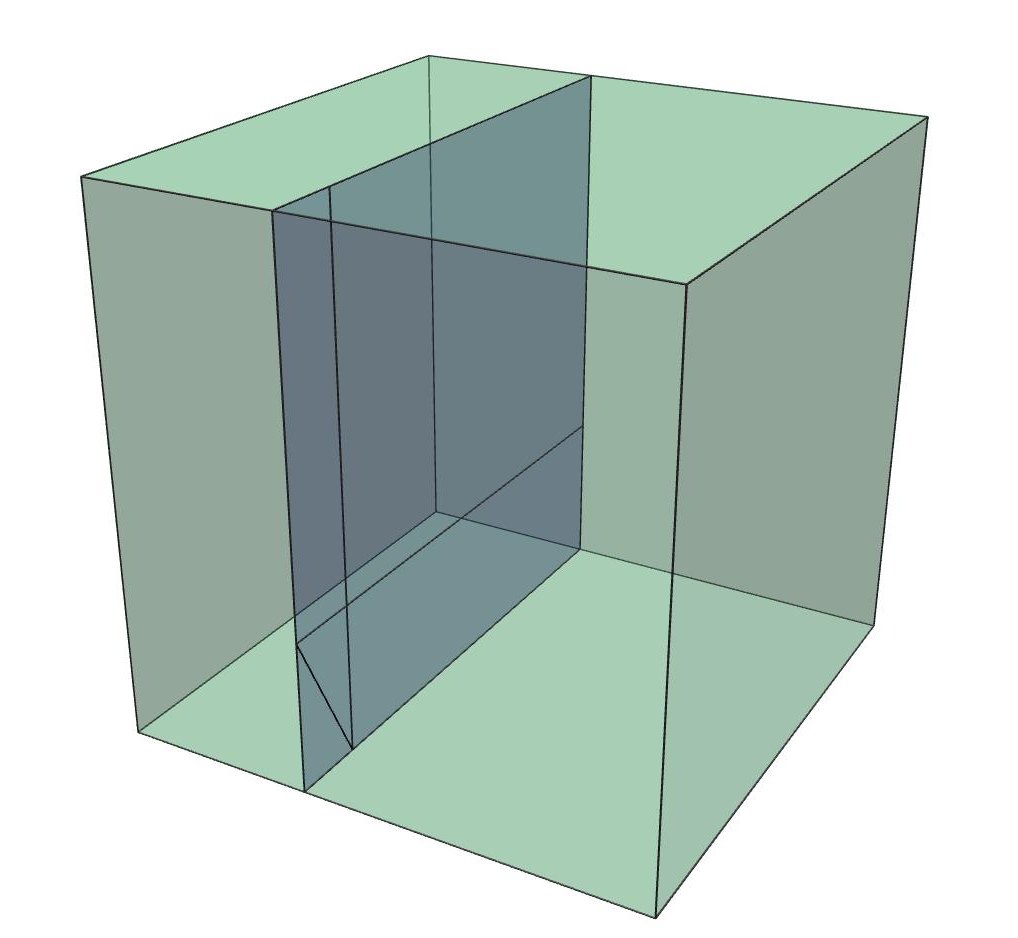}  \\
  \begin{tikzpicture}
   \draw (0,0) -- (1,0) -- (2,0) -- (3,0);
   \draw (0,0) -- (-1,0) node[left]{5};
   \draw (0,0) -- (-1,-0.3) node[left]{6};
   \draw (1,0) -- (-1,0.3) node[left]{4};
   \draw (2,0) -- (4,-0.3) node[right]{3};
   \draw (3,0) -- (4,0) node[right]{2};
   \draw (3,0) -- (4,0.3) node[right]{1};
   \fill[black] (0,0) circle (2pt) node[below]{$p_0$};
   \fill[black] (1,0) circle (2pt) node[below]{$p_1$};
  \end{tikzpicture} &
  \begin{tikzpicture}
   \draw (0,0) -- (1,0) -- (2,0) -- (3,0);
   \draw (0,0) -- (-1,0) node[left]{5};
   \draw (0,0) -- (-1,-0.3) node[left]{6};
   \draw (1,0) -- (-1,0.3) node[left]{4};
   \draw (2,0) -- (4,-0.3) node[right]{3};
   \draw (3,0) -- (4,0) node[right]{2};
   \draw (3,0) -- (4,0.3) node[right]{1};
   \fill[black] (2,0) circle (2pt) node[below]{$p_0$};
   \fill[black] (3,0) circle (2pt) node[above]{$p_1$};
  \end{tikzpicture} \\
  $5\gamma = 1$ & $4\alpha = 1$ \\
  & \\
  \hline
  \includegraphics[scale=0.14]{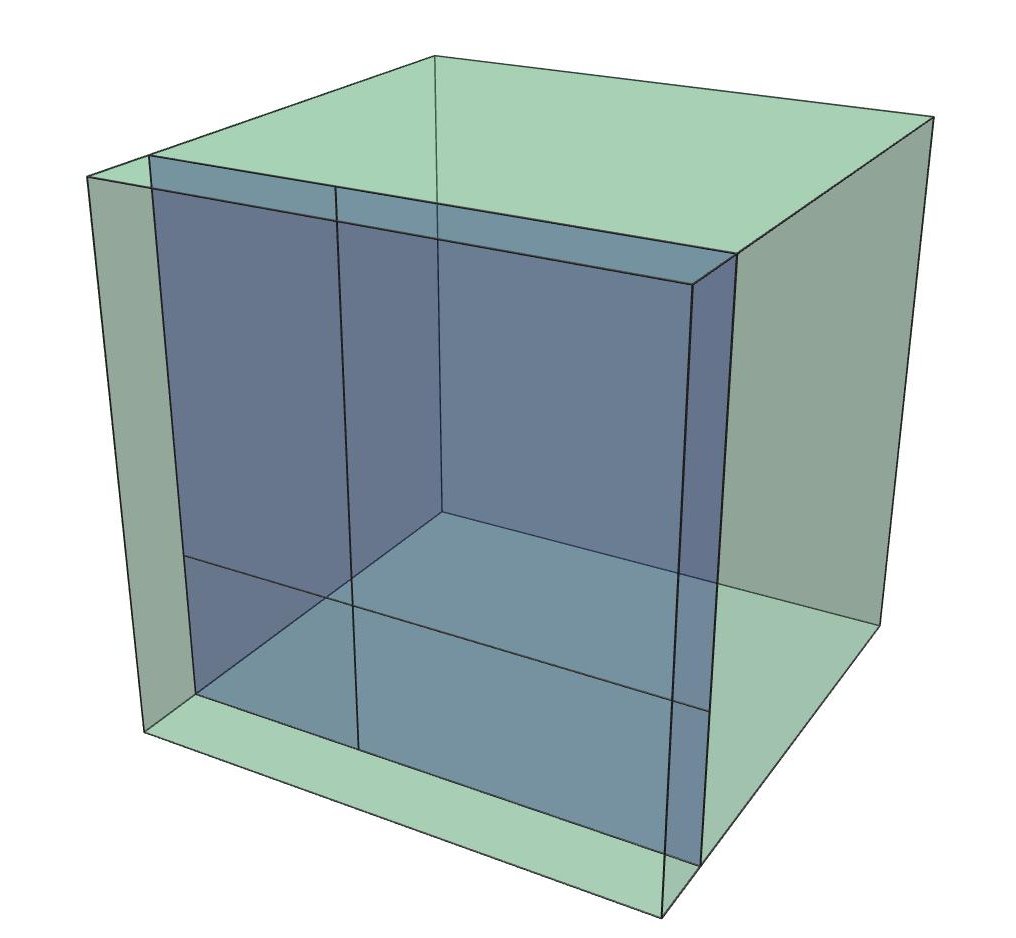}  &
  \includegraphics[scale=0.14]{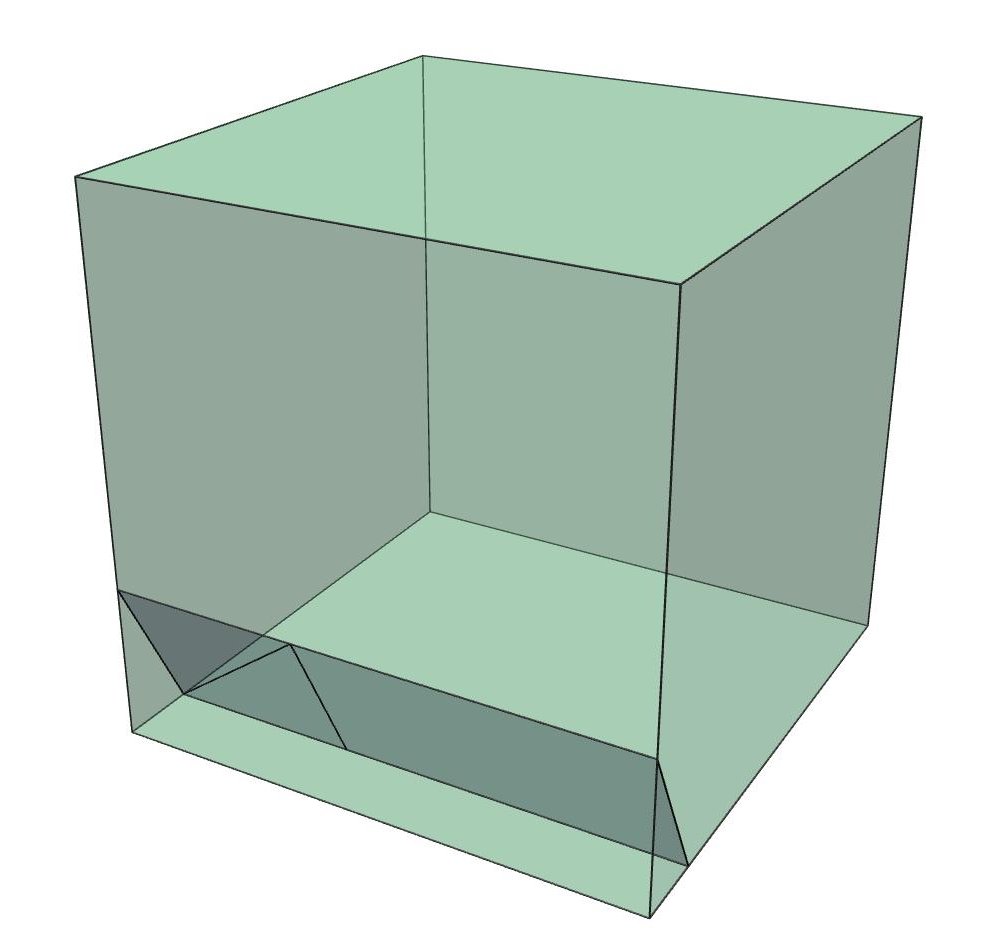}  \\
  
 \begin{tikzpicture}
   \draw (0,0) -- (1,0) -- (2,0) -- (3,0);
   \draw (0,0) -- (-1,0) node[left]{5};
   \draw (0,0) -- (-1,-0.3) node[left]{6};
   \draw (1,0) -- (-1,0.3) node[left]{4};
   \draw (2,0) -- (4,-0.3) node[right]{3};
   \draw (3,0) -- (4,0) node[right]{2};
   \draw (3,0) -- (4,0.3) node[right]{1};
   \fill[black] (2,0) circle (2pt) node[above]{$p_1$};
   \fill[black] (1,0) circle (2pt) node[below]{$p_0$};
  \end{tikzpicture} 2 & 
  \begin{tikzpicture}
   \draw (0,0) -- (1,0) -- (2,0) -- (3,0);
   \draw (0,0) -- (-1,0) node[left]{5};
   \draw (0,0) -- (-1,-0.3) node[left]{6};
   \draw (1,0) -- (-1,0.3) node[left]{4};
   \draw (2,0) -- (4,-0.3) node[right]{3};
   \draw (3,0) -- (4,0) node[right]{2};
   \draw (3,0) -- (4,0.3) node[right]{1};
   \fill[black] (0,0) circle (2pt) node[below]{$p_0$};
   \fill[black] (2,0) circle (2pt) node[above]{$p_1$};
  \end{tikzpicture} \\
  $10\beta = 1$ & $5\gamma + 10\beta = 1$\\
  & \\
  \hline
  \includegraphics[scale=0.14]{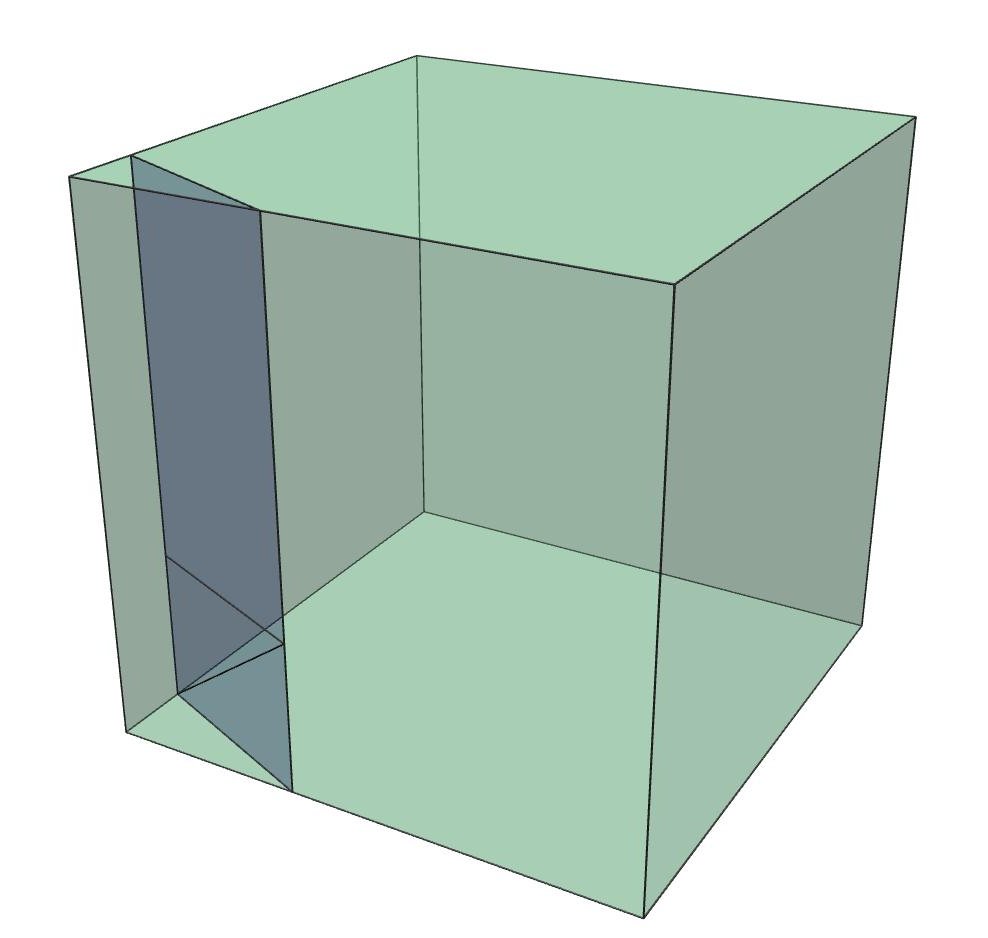}  & \includegraphics[scale=0.14]{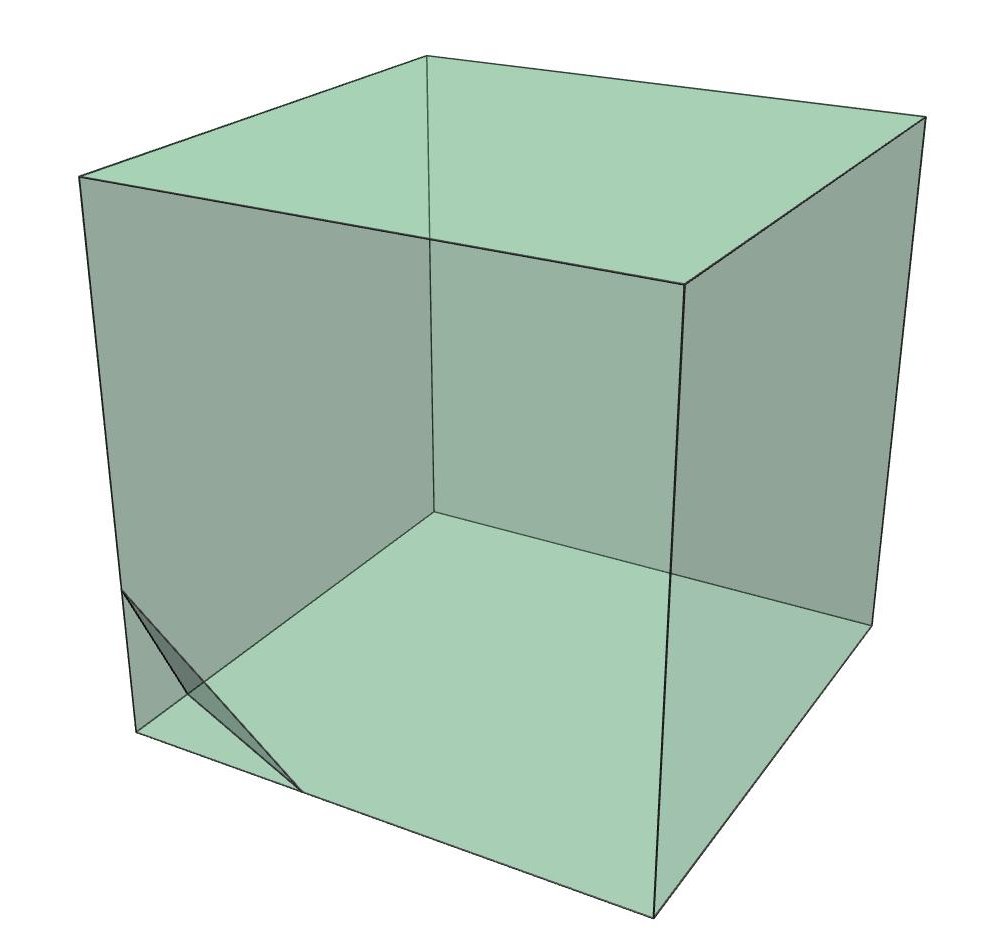}  \\
  \begin{tikzpicture}
   \draw (0,0) -- (1,0) -- (2,0) -- (3,0);
   \draw (0,0) -- (-1,0) node[left]{5};
   \draw (0,0) -- (-1,-0.3) node[left]{6};
   \draw (1,0) -- (-1,0.3) node[left]{4};
   \draw (2,0) -- (4,-0.3) node[right]{3};
   \draw (3,0) -- (4,0) node[right]{2};
   \draw (3,0) -- (4,0.3) node[right]{1};
   \fill[black] (3,0) circle (2pt) node[above]{$p_1$};
   \fill[black] (1,0) circle (2pt) node[below]{$p_0$};
  \end{tikzpicture} & 
  \begin{tikzpicture}
   \draw (0,0) -- (1,0) -- (2,0) -- (3,0);
   \draw (0,0) -- (-1,0) node[left]{5};
   \draw (0,0) -- (-1,-0.3) node[left]{6};
   \draw (1,0) -- (-1,0.3) node[left]{4};
   \draw (2,0) -- (4,-0.3) node[right]{3};
   \draw (3,0) -- (4,0) node[right]{2};
   \draw (3,0) -- (4,0.3) node[right]{1};
   \fill[black] (0,0) circle (2pt) node[below]{$p_0$};
   \fill[black] (3,0) circle (2pt) node[above]{$p_1$};
  \end{tikzpicture}  \\
  $10\beta + 4\alpha = 1$ & $5\gamma + 10\beta + 4\alpha = 1$\\
  & \\
  \hline
 \end{tabular}
 
 \caption{Different choices of vertices yield different cells of $\hw{2}(x,p)$.}\label{hurwitz_fig_choices}
\end{figure}

\section{Properties of Hurwitz cycles}\label{section_properties}

In the first two parts of this section we want to study whether tropical Hurwitz cycles are irreducible. For this purpose we will first prove that all (marked and unmarked) Hurwitz cycles are connected in codimension one. We will go on to show that for a generic choice of $p_j$ all marked cycles $\hwt{k}(x,p)$ are locally and globally  a multiple of an irreducible cycle. Finally we will see that $\hw{k}(x,p)$ is in general not irreducible.

\subsection{Connectedness in codimension one}\label{section_connected}

It is well known that $\mk{n}$ is connected in codimension one. In this particular case, the property has a very nice combinatorial description: maximal cones correspond to rational curves with $n-3$ bounded edges. A codimension one face of a maximal cone is attained by shrinking any of these edges to length 0, thus obtaining a single four-valent vertex. This vertex can then be \enquote{drawn apart} or \emph{resolved} in three different ways, thus moving into a maximal cone again. Saying that $\mk{n}$ is connected in codimension one means that we can transform any three-valent curve into another by alternatingly contracting edges and resolving four-valent vertices.

A similar correspondence holds for Hurwitz covers. An element of a maximal cone of $\hwt{k}(x,p) \subseteq \mk{N}(\R,x)$ can be considered as an $n$-marked rational curve $C$ with $N = n-2-k$ additional leaves attached to vertices of $C$. By abuse of notation, throughout this chapter we will also label these additional leaves by $p_0,\dots,p_{N-1}$. By the \emph{valence} of a vertex of an element of $\hwt{k}(x,p)$, we will mean the valence of the vertex in the underlying $n$-marked curve. 

For a generic choice of $p$, maximal cells of $\hwt{k}(x,p)$ will also correspond to curves with $n-3$ bounded edges and codimension one cells are obtained by shrinking an edge. Hence the problem of connectedness can be formulated in the same manner as for $\mk{n}$. However, the requirement that the contracted leaves be mapped to specific points excludes certain combinatorial \enquote{moves}, as we will shortly see.

Also note that the problem of connectedness does not really change if we allow non-generic points: the combinatorial problem remains essentially the same, we just allow some edge lengths to be 0. Hence we will assume throughout this section that $p_0 < p_1 < \dots < p_{N-1}$.

We will first show connectedness in the case $k = 1$. In this case the Hurwitz cycle is a tropical curve, so saying that $\hwt{1}(x,p)$ is connected in codimension one is the same as requiring that it is path-connected. So we will prove that for each two vertices $q,q'$ of $\hwt{1}(x,p)$ there exists a sequence of edges connecting them. 

We will prove this by induction on $n$, the length of $x$. For the case $n=5$ we will simply go through all possible cases explicitly. For $n > 5$, we will first show that any two covers of a special type, called \emph{chain covers}, are connected. Having shown this, we will then introduce a construction that allows us to connect any cover to a chain cover.

The general case is then an easy corollary, since we mark fewer vertices in higher-dimensional Hurwitz cycles, thus obtaining more degrees of freedom.

\begin{remark}\label{remark_bad_moves}
 Before we start, we want to discuss why this problem is so difficult. Since $\mk{N}$ is connected in codimension one, one would expect to be able to move from one combinatorial type to another without problems. However, the intermediate types need not be valid covers: a vertex of $\hwt{1}(x,p)$ can be considered as a point in a codimension one cone of $\mk{n}$, i.e.\ a curve with one four-valent vertex and only trivalent vertices besides, with an additional marked end attached to every vertex. Moving along an edge of $\hwt{1}(x,p)$ means moving an edge or leaf of that codimension one type along a bounded edge. However, this cannot be done in an arbitrary manner, since not all of these movements will produce valid covers (see figure \ref{figure_bad_move_ex} for an example). Note that the $p_j$ already fix the length of all bounded edges of a vertex curve in $\hwt{1}(x,p)$ uniquely. So, we will usually identify each vertex of $\hwt{1}(x,p)$ with the combinatorial type of the corresponding curve. 

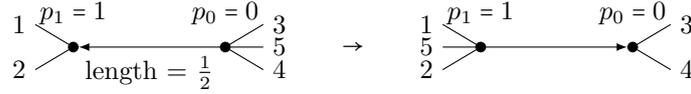
\begin{figure}[ht]
 \centering
  \begin{tikzpicture}
   \matrix[column sep = 5mm]{
	\draw[<-, shorten <=2pt] (0,0)  node[above = 5pt]{$p_1 = 1$} -- (2,0) node[above = 5pt]{$p_0 = 0$};
	\fill[black] (0,0) circle (2pt);
	\fill[black] (2,0) circle (2pt);
	\draw (1,0) node[below]{length = $\frac{1}{2}$};
	\draw (0,0) -- (-0.5,0.3) node[left]{1};
	\draw (0,0) -- (-0.5,-0.3) node[left]{2};
	\draw (2,0) -- (2.5, 0.3) node[right]{3};
	\draw (2,0) -- (2.5,0) node[right]{5};
	 \draw (2,0) -- (2.5,-0.3) node[right]{4}; &
	\draw (0,0) node{$\to$}; &
	\draw[->,shorten >=2pt] (0,0)  node[above = 5pt]{$p_1 = 1$} -- (2,0) node[above = 5pt]{$p_0 = 0$};
	\fill[black] (0,0) circle (2pt);
	\fill[black] (2,0) circle (2pt);
	\draw (0,0) -- (-0.5,0.3) node[left]{1};
	\draw (0,0) -- (-0.5,-0.3) node[left]{2};
	\draw (0,0) -- (-0.5,0) node[left]{5};
	\draw (2,0) -- (2.5, 0.3) node[right]{3};
	 \draw (2,0) -- (2.5,-0.3) node[right]{4};   
	\\
      };
  \end{tikzpicture}
  \caption{The curve on the left is a vertex of $\hwt{1}(1,1,1,1,-4)$. In $\curly{M}_{0,7}$ it corresponds to a ray spanning a cone with the curve on the right. However, the right curve is not an element of $\hwt{1}(1,1,1,1,-4)$ (for any edge length), since the edge direction is not compatible with the vertex ordering.}\label{figure_bad_move_ex}
 \end{figure}

Recall that the \emph{weight} or slope of an edge $e$ is $x_e := \abs{\sum_{i\in I} x_i}$, where $I$ is the split on $[n]$ induced by $e$. The \emph{orientation} of $e$ is chosen as in Example \ref{hurwitz_ex_cover}: $e$ \enquote{points towards $I$} if and only if $\sum_{i \in I} x_i > 0$.

Now, when moving some leaf along a bounded edge, that edge might change direction. But the direction of the edges is dictated by the order of the $p_i$, so this is not a valid move. One can easily see the following (see figure \ref{figure_invalid_moves} for an illustration): moving an edge/leaf $i$ to the other side of a bounded edge $e$ changes the direction of that edge if and only if one of them is incoming and one outgoing (recall that we consider leaves as incoming if they have negative weight) and $\abs{x_i} > x_e$. Note that, even if the direction of an edge does not change, moving an edge might be illegal (see the last diagram in figure \ref{figure_invalid_moves}), if the resulting edge configuration does not agree with the order on the $p_j$.
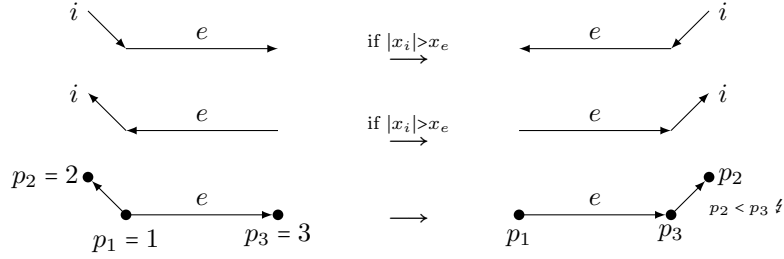
\begin{figure}[ht]
 \centering
  \begin{tikzpicture}
   \matrix[column sep = 5mm]{
	\draw[-latex] (0,0) -- (1,0) node[above]{$e$} -- (2,0);
	\draw[latex-] (0,0) -- (-0.5,0.5) node[left]{$i$}; &
	\draw (0,0) node{$\stackrel{\textnormal{if } \abs{x_i} > x_e}{\longrightarrow}$};&
	\draw[latex-] (0,0) -- (1,0) node[above]{$e$} -- (2,0);
	\draw[latex-] (2,0) -- (2.5,0.5) node[right]{$i$};\\	
	\draw[latex-] (0,0) -- (1,0) node[above]{$e$} -- (2,0);
	\draw[-latex] (0,0) -- (-0.5,0.5) node[left]{$i$}; &
	\draw (0,0) node{$\stackrel{\textnormal{if } \abs{x_i} > x_e}{\longrightarrow}$};&
	\draw[-latex] (0,0) -- (1,0) node[above]{$e$} -- (2,0);
	\draw[-latex] (2,0) -- (2.5,0.5) node[right]{$i$};\\
	\draw[-latex,shorten >= 2pt] (0,0) node[below=2pt]{$p_1 = 1$} -- (1,0) node[above]{$e$} -- (2,0) node[below]{$p_3 = 3$}; 
	\fill[black] (-0.5,0.5) circle (2pt);
	\fill[black] (0,0) circle (2pt);
	\fill[black] (2,0) circle (2pt);
	\draw[-latex,shorten >= 2pt] (0,0) -- (-0.5,0.5) node[left]{$p_2 = 2$}; &
	\draw (0,0) node{$\stackrel{}{\longrightarrow}$}; &
	\draw[-latex,shorten >= 2pt] (0,0) node[below=2pt]{$p_1$} -- (1,0) node[above]{$e$} -- (2,0) node[below]{$p_3$}; 
	\draw[-latex, shorten >= 2pt] (2,0) -- (2.5,0.5) node[right]{$p_2$};
	\fill[black] (2.5,0.5) circle (2pt);
	\fill[black] (0,0) circle (2pt);
	\fill[black] (2,0) circle (2pt);
	\draw (2.2,0.1) node[right = 5pt]{\tiny $p_2 < p_3 \;\lightning $};
	\\
      };
  \end{tikzpicture}
  \caption{Invalid moves on a Hurwitz cover: in the first two cases, when moving the leaf/edge  $i$ along the bounded edge $e$, the direction of $e$ changes. In the third case the edge direction of $e$ remains the same, but the direction is not compatible with the order of the $p_i$.}\label{figure_invalid_moves}
 \end{figure}
\end{remark}

\begin{defn}
A \emph{vertex type cover} is any cover corresponding to a vertex of $\hwt{1}(x,p)$. 
\end{defn}

\begin{lemma}\label{lemma_connected_five}
 For $n=5$, the cycle $\hwt{1}(x,p)$ is connected in codimension one for any $p$ and $x$.
\begin{proof}
 Let $q,q'$ be two vertices of $\hwt{1}(x,p)$ and $C,C'$ the corresponding rational curves. Both curves consist of a single bounded edge connecting contracted ends $p_0 < p_1$ with three leaves on one side and two on the other. We distinguish different cases, depending on how many leaves have to switch sides to go from $C$ to $C'$.

 Assume first that both curves only differ by the placement of one leaf, i.e.\ we want to move one leaf $i$ from the four-valent vertex in $C$ to the other side of the bounded edge. We can assume without restriction that the four-valent vertex in $C$ is at $p_0$. Assume that moving $i$ to the other side is an invalid move. Then the direction of the bounded edge would be inverted in $C'$, which is a contradiction to the fact that $p_0 < p_1$.

Now assume that both curves differ by an exchange of two leaves. Again we assume that the four-valent vertex in $C$ (and hence also in $C'$) is at $p_0$. Denote the leaves in $C$ at $p_0$ by $i,a,b$ and the remaining two at $p_1$ by $j,c$ and assume that $C'$ is obtained by exchanging $i$ and $j$. If we can move either $i$ in $C$ or $j$ in $C'$, then we are in the case where only one leaf needs to be moved, which we already studied. So assume that $i$ and $j$ cannot be moved in $C$ and $C'$, respectively. By remark \ref{remark_bad_moves}, this means that $x_i < - x_e < 0$, where $x_e$ is the weight of the bounded edge in $C$. Furthermore, $x_i + x_a + x_b = - x_e$, so $x_a + x_b > 0$. We assume without restriction that $x_a > 0$. Hence we can move $a$ along the bounded edge to obtain a valid cover $C_1$, whose four-valent vertex is at $p_1$. Since we assumed that we cannot move $j$ in $C'$, we must have $x_j < 0$ (it must be an incoming edge). This implies that we can move it to the left in $C_1$ to obtain a cover $C_2$. We now have $i,j,b$ at $p_0$ and $c,a$ at $p_1$. We want to show that we can move $i$ to the other side.  Assume this is not possible. Then $- x_i > x_e'$, where $x_e'$ is the weight of the bounded edge in $C_2$. But $x_e' = - x_i - x_j - x_b$. This implies $0 > - x_j - x_b$. Again, since $j$ cannot be moved in $C'$ we have $- x_j > x_a + x_b$. Finally, we obtain that $0 > x_a + x_b - x_b = x_a > 0$, which is a contradiction. Thus we can move $i$ to the right side to obtain a cover $C_3$. This cover now only differs from $C'$ by the placement of leaf $a$, so we are again in the first case (see figure \ref{figure_exchange_leaves} for an illustration).
\begin{figure}[ht]
 \centering
  \begin{tikzpicture}
   \matrix[column sep = 1mm]{
	\draw[->, shorten >= 2pt] (0,0) node[above=2pt]{$p_0$} -- (0.5,0) node[below=4pt]{$C$} -- (1,0) node[above=2pt]{$p_1$};
	\draw (0,0) -- (-0.5,0.3) node[left]{$i$};
	\draw (0,0) -- (-0.5,0) node[left]{$a$};
	\draw (0,0) -- (-0.5,-0.3) node[left]{$b$};
	\draw (1,0) -- (1.5,0.3) node[right]{$j$};
	\draw (1,0) -- (1.5,-0.3) node[right]{$c$};
	\fill[black] (0,0) circle (2pt); 
	\fill[black] (1,0) circle (2pt);&
	\draw (0,0) node{$\leadsto$};&
	\draw[->, shorten >= 2pt] (0,0) node[above=2pt]{$p_0$} -- (0.5,0) node[below=4pt]{$C_1$} -- (1,0) node[above=2pt]{$p_1$};
	\draw (0,0) -- (-0.5,0.3) node[left]{$i$};
	\draw (0,0) -- (-0.5,-0.3) node[left]{$b$};
	\draw (1,0) -- (1.5,0.3) node[right]{$j$};
	\draw[color = DarkRed] (1,0) -- (1.5,0) node[right]{$a$};
	\draw (1,0) -- (1.5,-0.3) node[right]{$c$};
	\fill[black] (0,0) circle (2pt); 
	\fill[black] (1,0) circle (2pt);&
	\draw (0,0) node{$\leadsto$};&
	\draw[->, shorten >= 2pt] (0,0) node[above=2pt]{$p_0$} -- (0.5,0) node[below=4pt]{$C_2$} -- (1,0) node[above=2pt]{$p_1$};
	\draw (0,0) -- (-0.5,0.3) node[left]{$i$};
	\draw[color = DarkRed] (0,0) -- (-0.5,0) node[left]{$j$};
	\draw (0,0) -- (-0.5,-0.3) node[left]{$b$};
	\draw (1,0) -- (1.5,0.3) node[right]{$a$};
	\draw (1,0) -- (1.5,-0.3) node[right]{$c$};
	\fill[black] (0,0) circle (2pt); 
	\fill[black] (1,0) circle (2pt);\\
	& \draw (0,0) node{$\leadsto$};&
	\draw[->, shorten >= 2pt] (0,0) node[above=2pt]{$p_0$} -- (0.5,0) node[below=4pt]{$C_3$} -- (1,0) node[above=2pt]{$p_1$};
	\draw (0,0) -- (-0.5,0.3) node[left]{$j$};
	\draw (0,0) -- (-0.5,-0.3) node[left]{$b$};
	\draw (1,0) -- (1.5,0.3) node[right]{$a$};
	\draw[color = DarkRed] (1,0) -- (1.5,0) node[right]{$i$};
	\draw (1,0) -- (1.5,-0.3) node[right]{$c$};
	\fill[black] (0,0) circle (2pt); 
	\fill[black] (1,0) circle (2pt);&
	\draw (0,0) node{$\leadsto$};&
	\draw[->, shorten >= 2pt] (0,0) node[above=2pt]{$p_0$} -- (0.5,0) node[below=4pt]{$C'$} -- (1,0) node[above=2pt]{$p_1$};
	\draw (0,0) -- (-0.5,0.3) node[left]{$j$};
	\draw[color = DarkRed] (0,0) -- (-0.5,0) node[left]{$a$};
	\draw (0,0) -- (-0.5,-0.3) node[left]{$b$};
	\draw (1,0) -- (1.5,0.3) node[right]{$i$};
	\draw (1,0) -- (1.5,-0.3) node[right]{$c$};
	\fill[black] (0,0) circle (2pt); 
	\fill[black] (1,0) circle (2pt);\\
      };
  \end{tikzpicture}
  \caption{Connecting two curves differing by an exchange of leaves. The leaf we moved in each step is marked by a red line.}\label{figure_exchange_leaves}
\end{figure}
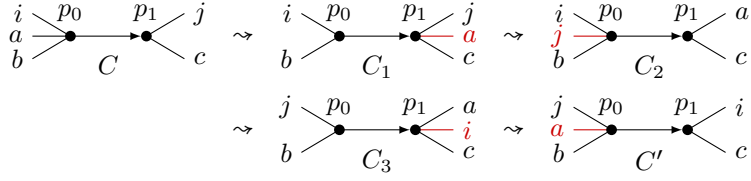

Now assume we have to move three leaves (see figure \ref{figure_move_three}). That means we have to exchange two leaves $i,j$ from the four-valent vertex in $C$ (again assume it is at $p_0$) for one leaf $k$ at $p_1$. Assume we cannot move $i$ in $C$. In particular, $x_i < 0$. But that means we can move $i$ in $C'$ to obtain a cover $C_1$. This cover differs from $C$ by the exchange of $j$ and $k$, so we already know they are connected.

\begin{figure}[ht]
 \centering
  \begin{tikzpicture}
   \matrix[column sep = 10mm]{
	\draw[->, shorten >= 2pt] (0,0) node[above=2pt]{$p_0$} -- (0.5,0) node[below=4pt]{$C$} -- (1,0) node[above=2pt]{$p_1$};
	\draw (0,0) -- (-0.5,0.3) node[left]{$i$};
	\draw (0,0) -- (-0.5,0) node[left]{$j$};
	\draw (0,0) -- (-0.5,-0.3) node[left]{$a$};
	\draw (1,0) -- (1.5,0.3) node[right]{$k$};
	\draw (1,0) -- (1.5,-0.3) node[right]{$b$};
	\fill[black] (0,0) circle (2pt);
	\fill[black] (1,0) circle (2pt);
	&
	\draw[->,shorten >= 2pt] (0,0) node[above=2pt]{$p_0$} -- (0.5,0) node[below=4pt]{$C'$} -- (1,0) node[above=2pt]{$p_1$};
	\draw (0,0) -- (-0.5,0.3) node[left]{$k$};
	\draw (0,0) -- (-0.5,-0.3) node[left]{$a$};
	\draw (1,0) -- (1.5,0.3) node[right]{$i$};
	\draw (1,0) -- (1.5,0) node[right]{$j$};
	\draw (1,0) -- (1.5,-0.3) node[right]{$b$};
	\fill[black] (0,0) circle (2pt);
	\fill[black] (1,0) circle (2pt);\\
      };
  \end{tikzpicture}
  \caption{Two vertex types differing by a movement of three leaves. Depending on the direction of $i$, we can move it either in $C$ or in $C'$.}\label{figure_move_three}
\end{figure}
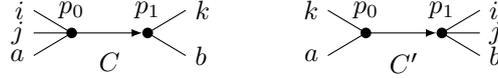

Finally, assume that four leaves have to switch sides, i.e.\ we exchange two leaves $i,j$ at the four-valent at $p_0$ for the two leaves $k,l$ at $p_1$. Assume we can move neither $i$ nor $j$. This means that $x_i,x_j < 0$. But then $x_i + x_j < 0$ as well, so the edge direction would be inverted in $C'$, which is a contradiction. Hence we can move $i$ or $j$ and reduced the problem to the case where only three leaves need to be moved.

It is easy to see that these are all possible cases. In particular, it is impossible to let all five leaves switch sides, since this would automatically invert the direction of the bounded edge.
\end{proof}
\end{lemma}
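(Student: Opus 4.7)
Since $\hwt{1}(x,p)$ is one-dimensional, connectedness in codimension one coincides with path-connectedness, so it suffices to connect any two vertex type covers $C,C'$ by a chain of edges. A vertex of $\hwt{1}(x,p)$ is a $5$-marked rational curve with a single bounded edge $e$ and exactly one four-valent vertex; the contracted ends $p_0,p_1$ are attached to the two endpoints of $e$, and the convention $p_0<p_1$ together with the orientation rule of Example~\ref{hurwitz_ex_cover} forces $\sum_{i\in A_{p_0}}x_i>0$, where $A_{p_0}$ is the leaf set on the $p_0$-side. An edge of $\hwt{1}(x,p)$ incident to such a vertex arises from a resolution of its four-valent vertex, i.e.\ from a \emph{move} transferring a single half-edge across $e$; by Remark~\ref{remark_bad_moves} such a move is admissible unless it would reverse the direction of $e$.

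\textbf{Induction parameter.} Given $C,C'$, let $d=d(C,C')$ be the number of leaves placed on different sides of $e$ in $C$ versus $C'$. I plan to induct on $d\in\{1,2,3,4\}$. The extreme case $d=5$ is excluded a priori: it would force $A_{p_0}(C')=A_{p_1}(C)$, and then balancing gives $\sum_{i\in A_{p_0}(C')}x_i=-\sum_{i\in A_{p_0}(C)}x_i<0$, contradicting the orientation condition for $C'$.

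\textbf{Cases $d=1,3,4$.} For $d=1$ the unique required move has to be admissible, because otherwise $e$ would be oriented from $p_1$ towards $p_0$ in $C'$, contradicting $p_0<p_1$. For $d=3$ and $d=4$ I plan a contradiction argument: assuming every candidate one-leaf move in both $C$ and $C'$ is blocked produces inequalities on the leaf weights which, combined with $\sum_\ell x_\ell=0$, violate the orientation condition at one of the two endpoints. Hence at least one admissible move exists in $C$ or $C'$, strictly decreasing $d$ and feeding back into the induction.

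\textbf{Main obstacle ($d=2$).} The genuinely delicate case is a pure swap $i\leftrightarrow j$ in which neither the direct move of $i$ in $C$ nor of $j$ in $C'$ is admissible. The plan is to detour through a third leaf $a\in A_{p_0}(C)\setminus\{i\}$ whose sign is opposite to that of the blocker; the blockage inequalities together with $\sum_\ell x_\ell=0$ guarantee that such an $a$ exists. A four-step sequence $C\to C_1\to C_2\to C_3\to C'$ obtained by migrating $a$, then $j$, then $i$, and finally $a$ back, yields the connection, with the crucial point being that moving $a$ first alters the weight $x_e$ of the bounded edge enough to unblock the subsequent $j$- and $i$-moves. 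Verifying admissibility of each individual step of this detour—especially the final return move of $a$—is the main combinatorial checkpoint of the proof.
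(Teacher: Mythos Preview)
Your proposal is correct and follows essentially the same route as the paper: case analysis on the number $d$ of leaves that must switch sides, ruling out $d=5$ by orientation, handling $d=1$ directly, reducing $d=3,4$ via sign arguments, and resolving the blocked $d=2$ swap by the four-step detour $C\to C_1\to C_2\to C_3\to C'$ through an auxiliary leaf $a$ of positive weight. One small recalibration: the delicate admissibility check in the detour is not the final return of $a$ (that is just an instance of the already-settled $d=1$ case) but rather the move of $i$ from $C_2$ to $C_3$, which requires combining the blockage inequalities for $i$ and $j$ with balancing.
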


As mentioned above, we want to show that for $n > 5$ we can connect each vertex type to a vertex corresponding to a \emph{standard cover}. Let us define this:

\begin{defn}\label{connect_defn_std}
 Let $x \in \curly{H}_n$. We define an order $<_x$ on $[n]$ by:
$$i <_x j :\iff x_i < x_j \textnormal{ or } (x_i = x_j \textnormal{ and } i < j). $$
A \emph{chain cover for $x$} is a vertex type cover with the additional property that the vertex marked with $p_i$ is connected to the vertex marked with $p_j$, if and only if $\abs{i - j} = 1$ (i.e.\ the $p_j$ are arranged as a single chain in order of their size). Fix an $s \in \{0,\dots,n-4\}$. The \emph{standard cover for $x$ at $p_s$} is the unique chain cover, where the leaves are attached to the $p_j$ according to their size (defined by $<_x$) and $p_s$ is at the four-valent vertex. More precisely: if leaf $i$ is attached to $p_k$ and leaf $j$ is attached to $p_l$, then $i <_x j \iff p_k < p_l$ (See figure \ref{figure_standard_cover} for an example of this construction).
\begin{figure}[ht]
 \centering
  \begin{tikzpicture}
    \draw[->, shorten >= 2pt] (0,0) node[above=2pt]{$p_0$} -- (1,0) node[above=2pt]{$p_1$};
    \draw[->, shorten >= 2pt] (1,0) -- (2,0) node[above=2pt]{$p_2$}; 
    \draw[->, shorten >= 2pt] (2,0) -- (3,0) node[above=2pt]{$p_3$};
    \draw (3,0) -- (3.5,0.3) node[right]{$x_1 = 3$};
    \draw (3,0) -- (3.5,0) node[right]{$x_2 = 2$};
    \draw (3,0) -- (3.5,-0.3) node[right]{$x_7 = 1$};
    \draw (2,0) -- (2.3,-0.5) node[below]{$x_3 = 1$};
    \draw (1,0) -- (0.7,-0.5) node[below]{$x_5 = -1$};
    \draw (0,0) -- (-0.5,0.3) node[left]{$x_4 = -3$};
    \draw (0,0) -- (-0.5,-0.3) node[left]{$x_6 = -3$};
    \fill[black] (0,0) circle (2pt);
    \fill[black] (1,0) circle (2pt);
    \fill[black] (2,0) circle (2pt);
    \fill[black] (3,0) circle (2pt);
  \end{tikzpicture}
  \caption{The standard cover for $x= (3,2,1,-3,-1,-3,1)$ at $p_3$}\label{figure_standard_cover}
\end{figure}
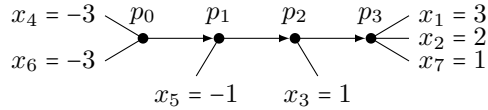
\end{defn}

\begin{lemma}
 Each standard cover is a valid Hurwitz cover.
\begin{proof}
 We have to show that the edge connecting $p_j$ and $p_{j+1}$ points towards $p_{j+1}$ for all $j$. Note that the weight and direction of an edge only depend on the split defined by it.

We will say that a leaf \emph{lies behind} $p_k$, if it is attached to some $p_{k'}$, $k' \geq k$. Denote the leaves lying behind $p_{j+1}$ by $i_1,\dots,i_l$. Their weights are by construction larger than or equal to all weights of remaining leaves. Considering that the sum over all leaves is 0, this implies that $\sum_{s = 1}^l x_{i_s} > 0$ (if it was 0, then all $x_i$ would have to be 0). Hence the bounded edge points towards $p_{j+1}$.
\end{proof}
\end{lemma}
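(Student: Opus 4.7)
The plan is to show, for each bounded edge of the chain, that its orientation as dictated by the split of leaves is compatible with the linear ordering $p_0 < p_1 < \cdots < p_{N-1}$. Since the slope and orientation of an edge in a cover depend only on the split it induces (Example \ref{hurwitz_ex_cover}), it suffices to analyze the sum of leaf weights on each side of the edge in question.

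First I would fix the bounded edge $e_j$ joining the vertex holding $p_j$ to the vertex holding $p_{j+1}$, and let $I_j$ denote the set of true leaves attached to the vertices for $p_{j+1}, \dots, p_{N-1}$ (the leaves "lying behind" $p_{j+1}$). The edge $e_j$ points toward the $I_j$-side, i.e.\ toward $p_{j+1}$, exactly when $\sum_{i \in I_j} x_i > 0$, so this positivity is the only thing to verify.

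The key structural input is the definition of the standard cover: leaves are attached in increasing $<_x$-order along the chain, so every $x_i$ with $i \in I_j$ is at least every $x_{i'}$ with $i' \notin I_j$; in particular $\min_{i \in I_j} x_i \geq \max_{i' \notin I_j} x_{i'}$. I would then argue by contradiction: if $\sum_{i \in I_j} x_i \leq 0$, then since the total $\sum_i x_i$ vanishes we have $\sum_{i' \notin I_j} x_{i'} \geq 0$. The first inequality forces $\min_{I_j} x \leq 0$, the second forces $\max_{I_j^c} x \geq 0$, and combining with the domination gives $0 \geq \min_{I_j} x \geq \max_{I_j^c} x \geq 0$. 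This squeezes both extremes to zero and hence every $x_i$ to zero, contradicting $x \in \curly{H}_n \setminus \{0\}$.

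The one mild subtlety is that the ordering inequality $\min_{I_j} x \geq \max_{I_j^c} x$ is not strict (ties in $<_x$ occur whenever two entries of $x$ agree), so the contradiction cannot come from that inequality alone. It is the combination with the global balancing condition $\sum_i x_i = 0$ and the standing assumption that $x$ is not the zero vector that closes the argument, and the rest is a short sign chase.
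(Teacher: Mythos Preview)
Your argument is correct and follows essentially the same approach as the paper: both identify the set $I_j$ of leaves behind $p_{j+1}$, use that these carry the largest $x$-values by construction, and combine this with $\sum_i x_i = 0$ and $x \neq 0$ to force $\sum_{i \in I_j} x_i > 0$. Your version simply makes the sign chase explicit via the squeeze $0 \geq \min_{I_j} x \geq \max_{I_j^c} x \geq 0$, which the paper leaves to the reader.
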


We will also need another construction in our proofs:

\begin{defn}
 Let $C$ be a vertex type cover and $e$ any bounded edge in $C$ connecting the contracted ends $p$ and $q$. Removing $e$, we obtain two path-connected components. For any contracted end $r$, we write $C_e(r)$ for the component containing $r$.

 Now assume $C_e(p)$ contains the four-valent vertex and at least one other bounded edge. The \emph{split cover at $e$} is a cover $C'$ obtained in the following way: remove the edge $e$ and keep only $C_e(p)$. Then attach a leaf to $p$ whose weight is the original weight of $e$ (or its negative, if $e$ pointed towards $p$). This is obviously a vertex type cover for some $x' = (x_1',\dots,x_m')$, where $m < n$  (see figure \ref{figure_split} for an example). We denote the leaf replacing $e$ by $l_e$ and call it the \emph{splitting leaf}.
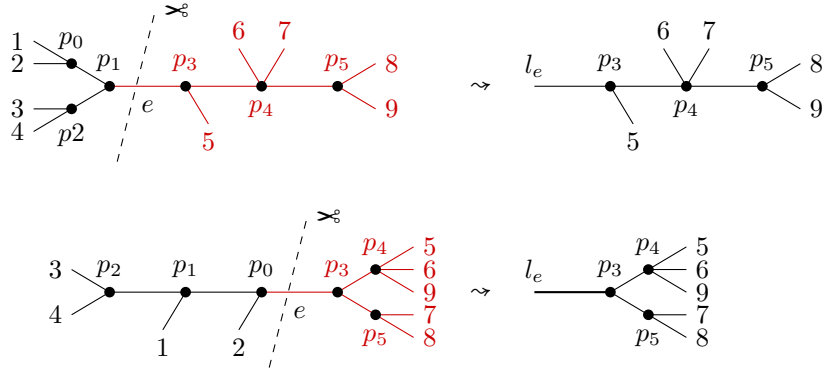
\begin{figure}[ht]
 \centering
  \begin{tikzpicture}
   \matrix[column sep = 2mm,row sep = 5mm]{
	\draw (0,0) node[above=2pt]{$p_1$} -- (-0.5,0.3) node[above=2pt]{$p_0$};
	\draw (0,0) -- (-0.5,-0.3) node[below=2pt]{$p2$};
	\draw (-0.5,0.3) -- (-1,0.3) node[left]{$2$};
	\draw (-0.5,0.3) -- (-1,0.6) node[left]{$1$};
	\draw (-0.5,-0.3) -- (-1,-0.3) node[left]{$3$};
	\draw (-0.5,-0.3) -- (-1,-0.6) node[left]{$4$};
	\draw[color = DarkRed] (0,0) -- (1,0) node[above=2pt]{$p_3$};
	\draw (0.5,0) node[below=2pt]{$e$};
	\draw[color = DarkRed] (1,0) -- (1.3,-0.5) node[below]{$5$};
	\draw[color = DarkRed] (1,0) -- (2,0) node[below=2pt]{$p_4$};
	\draw[color = DarkRed] (2,0) -- (1.7,0.5) node[above]{$6$};
	\draw[color = DarkRed] (2,0) -- (2.3,0.5) node[above]{$7$};
	\draw[color = DarkRed] (2,0) -- (3,0) node[above=2pt]{$p_5$};
	\draw[color = DarkRed] (3,0) -- (3.5,0.3) node[right]{$8$};
	\draw[color = DarkRed] (3,0) -- (3.5,-0.3) node[right]{$9$};
	\fill[black] (0,0) circle (2pt);
	\fill[black] (-0.5,0.3) circle (2pt);
	\fill[black] (-0.5,-0.3) circle (2pt);
	\fill[black] (1,0) circle (2pt);
	\fill[black] (2,0) circle (2pt);
	\fill[black] (3,0) circle (2pt);
	\draw[dashed] (0.1,-1) -- (0.6,1) node[right]{\Large\Leftscissors};
	&
	\draw (0,0) node{$\leadsto$};&
	\draw (0,0) node[above]{$l_e$} -- (1,0) node[above=2pt]{$p_3$};
	\draw (1,0) -- (1.3,-0.5) node[below]{$5$};
	\draw (1,0) -- (2,0) node[below=2pt]{$p_4$};
	\draw (2,0) -- (1.7,0.5) node[above]{$6$};
	\draw (2,0) -- (2.3,0.5) node[above]{$7$};
	\draw (2,0) -- (3,0) node[above=2pt]{$p_5$};
	\draw (3,0) -- (3.5,0.3) node[right]{$8$};
	\draw (3,0) -- (3.5,-0.3) node[right]{$9$};
	\fill[black] (1,0) circle (2pt);
	\fill[black] (2,0) circle (2pt);
	\fill[black] (3,0) circle (2pt);
	\\
	\draw (0,0) node[above=2pt]{$p_2$} -- (1,0) node[above = 2pt]{$p_1$};
	\draw (1,0) -- (2,0) node[above=2pt]{$p_0$};
	\draw (0,0) -- (-0.5,0.3) node[left]{$3$};
	\draw (0,0) -- (-0.5,-0.3) node[left]{$4$};
	\draw (1,0) -- (0.7,-0.5) node[below]{$1$};
	\draw (2,0) -- (1.7,-0.5) node[below]{$2$};
	\draw[color = DarkRed] (2,0) -- (3,0) node[above=2pt]{$p_3$};
	\draw (2.5,0) node[below=2pt]{$e$};
	\draw[color = DarkRed] (3,0) -- (3.5,0.3) node[above=2pt]{$p_4$};
	\draw[color = DarkRed] (3,0) -- (3.5,-0.3) node[below=2pt]{$p_5$};
	\draw[color = DarkRed] (3.5,0.3) -- (4,0.6) node[right]{$5$};
	\draw[color = DarkRed] (3.5,0.3) -- (4,0.3) node[right]{$6$};
	\draw[color = DarkRed] (3.5,0.3) -- (4,0) node[right]{$9$};
	\draw[color = DarkRed] (3.5,-0.3) -- (4,-0.3) node[right]{$7$};
	\draw[color = DarkRed] (3.5,-0.3) -- (4,-0.6) node[right]{$8$};
	\fill[black] (0,0) circle (2pt);
	\fill[black] (1,0) circle (2pt);
	\fill[black] (2,0) circle (2pt);
	\fill[black] (3,0) circle (2pt);
	\fill[black] (3.5,0.3) circle (2pt);
	\fill[black] (3.5,-0.3) circle (2pt);
	\draw[dashed] (2.1,-1) -- (2.6,1) node[right]{\Large\Leftscissors};
	&
	\draw (0,0) node{$\leadsto$};&
	\draw[thick] (0,0) node[above]{$l_e$} -- (1,0) node[above=2pt]{$p_3$};
	\draw (1,0) -- (1.5,0.3) node[above=2pt]{$p_4$};
	\draw (1,0) -- (1.5,-0.3) node[below=2pt]{$p_5$};
	\draw (1.5,0.3) -- (2,0.6) node[right]{$5$};
	\draw (1.5,0.3) -- (2,0.3) node[right]{$6$};
	\draw (1.5,0.3) -- (2,0) node[right]{$9$};
	\draw (1.5,-0.3) -- (2,-0.3) node[right]{$7$};
	\draw (1.5,-0.3) -- (2,-0.6) node[right]{$8$};
	\fill[black] (1,0) circle (2pt);
	\fill[black] (1.5,0.3) circle (2pt);
	\fill[black] (1.5,-0.3) circle (2pt);
	\\
      };
  \end{tikzpicture}
  \caption{Two Hurwitz covers for $n = 9$. In each case the split cover at the edge marked by $e$ is a cover for $n=6$ (the labels at the leaves are just indices in this case, not weights).}\label{figure_split}
\end{figure}
\end{defn}

We now want to see that all chain covers are connected:

\begin{lemma}\label{lemma_chain_connected}
 Let $x \in \curly{H}_n$ and let $p_0,\dots,p_{n-4} \in \R$ with $p_j \leq p_{j+1}$ for all $j$. Then all chain covers for $x$ are connected to each other.
\begin{proof}
 We will show that all chain covers are connected to a standard cover at some $p_s$. We prove this by induction on $n$. For $n = 5$, all covers are chain covers and our claim follows from lemma \ref{lemma_connected_five}. 

So let $n > 5$ and $C$ be any chain cover. We can assume without restriction that the vertices at $p_0$ and $p_{n-4}$ are trivalent (if they are not, one can easily see that at least one leaf can be moved away). Take any bounded edge $e$ connecting some $p_j$ and $p_{j+1}$. Suppose there is a leaf $k$ at $p_j$ and a leaf $l$ at $p_{j+1}$, such that $k >_x l$. This means that exchanging $k$ and $l$ still gives a valid cover. We can assume without restriction that $j > 0$, i.e.\ $e$ is not the first edge (if $j = 0$, we can use a similar argument using a split cover at the edge connecting $p_{n-5}$ and $p_{n-4}$ ). 

Let $C'$ be the split cover at the edge connecting $p_0$ and $p_1$. This is a cover on $n-1$ leaves. By induction we know that $C'$ is connected to the cover which only differs from $C'$ by exchanging $k$ and $l$. Let $C''$ be any vertex type cover occurring along that path. Since $p_0$ is smaller than all $p_j$, we can lift $C''$ to a cover on $n$ leaves: simply re-attach the splitting leaf to $p_0$. (see figure \ref{figure_branch_sort} for an illustration of the split-and-lift construction in a different case).

Hence we obtain a path between $C$ and and the cover $\tilde{C}$, where $k$ and $l$ have been exchanged. We can apply this procedure iteratively to sort all leaves to obtain a standard cover at some $p_s$. 

Finally, note that all standard covers are connected: one can always move the smallest leaf at the four-valent vertex to the left (except of course at $p_0$) and the largest leaf to the right. This way the four-valent vertex can be placed at any contracted end.
\end{proof}
\end{lemma}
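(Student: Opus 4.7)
My plan is to prove the lemma by induction on $n$, using the split-and-lift construction together with the base case from Lemma \ref{lemma_connected_five} and the strategy of reducing every chain cover to some standard cover.

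For the base case $n = 5$, there is only one bounded edge in each vertex cover and only two contracted ends, so every vertex type is a chain cover and the claim follows directly from Lemma \ref{lemma_connected_five}. For the induction step assume the statement holds for all lengths less than $n$, and let $C$ be a chain cover for $x \in \curly{H}_n$. Without loss of generality the vertices at $p_0$ and $p_{n-4}$ are trivalent, since otherwise a leaf sitting at them can be moved away first. The goal is to reach the standard cover at some $p_s$ by iteratively fixing local violations of the order $<_x$ along the chain.

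Concretely, suppose there is an edge $e$ between $p_j$ and $p_{j+1}$ and leaves $k$ at $p_j$, $l$ at $p_{j+1}$ with $k >_x l$. I claim the cover $\tilde C$ obtained from $C$ by exchanging $k$ and $l$ is connected to $C$. To see this, pick an end not involved in the exchange on which to split; after possibly swapping roles of the two extremes we may take $e' $ to be the edge between $p_0$ and $p_1$ and form the split cover $C'$ at $e'$ on $n-1$ leaves (replacing the cut edge by the splitting leaf $l_{e'}$). Both $C'$ and the corresponding split cover $\tilde C'$ of $\tilde C$ are chain covers on a smaller $x'$, and by the induction hypothesis there is a path of vertex-type moves $C' = C'_0, C'_1, \dots, C'_r = \tilde C'$ inside $\hwt{1}(x',p')$. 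I then lift this path back to $\hwt{1}(x,p)$ by re-attaching $l_{e'}$ as a genuine edge to $p_0$ in each $C'_i$. The key point is that because $p_0$ is strictly the leftmost end, the edge between $p_0$ and the vertex carrying $l_{e'}$ in each $C'_i$ automatically has the correct orientation (the direction is forced by the split of leaves, which is the same as in $C$), and the valences and the chain structure are preserved. Hence each lifted cover is a valid chain cover, and consecutive lifts differ by the same admissible move as in the smaller cover, giving a path from $C$ to $\tilde C$.

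Iterating this exchange procedure sorts the leaves along the chain in the order dictated by $<_x$, so we eventually reach a standard cover at some $p_s$. To finish, note that any two standard covers are connected: at the four-valent vertex $p_s$ of a standard cover, the smallest attached leaf (with respect to $<_x$) can be pushed one step to the left whenever $s > 0$, and the largest one to the right whenever $s < n-4$, each such move producing again a standard cover with the four-valent vertex shifted by one. Thus $p_s$ can be moved to any position in $\{0,\dots,n-4\}$, which shows that all standard covers are mutually connected and hence so are all chain covers.

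The main obstacle I anticipate is verifying that the lifting step is well-defined throughout the entire induction path: one must check that attaching $l_{e'}$ back to $p_0$ in each intermediate cover $C'_i$ produces a bounded edge whose direction and weight are compatible with the order $p_0 < p_1 < \dots < p_{n-4}$, ruling out the obstruction illustrated in Remark \ref{remark_bad_moves}. This works because the split of leaves across $e'$ is preserved along the entire path in $\hwt{1}(x',p')$ (the splitting leaf $l_{e'}$ is never moved to the other component), so the orientation and weight of the reattached edge match those of $e'$ in $C$, which was valid by assumption.
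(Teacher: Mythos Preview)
Your argument follows the paper's proof essentially step for step: induction on $n$ with base case Lemma~\ref{lemma_connected_five}, splitting at the edge adjacent to $p_0$ (or $p_{n-4}$), applying the induction hypothesis to swap an out-of-order pair $k,l$, lifting the resulting path, and finishing by observing that all standard covers are mutually connected.

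Two small inaccuracies are worth fixing. First, the intermediate covers $C_i'$ along the induction path in $\hwt{1}(x',p')$ need not be chain covers, so their lifts need not be chain covers either; all that is required (and all the paper claims) is that each lift is a valid vertex type cover in $\hwt{1}(x,p)$. Second, your justification in the final paragraph is slightly off: the splitting leaf $l_{e'}$ \emph{can} move during the induction path, and there is no ``other component'' once you have split. The correct reason the lift always succeeds is the one the paper gives: since $p_0$ is strictly smaller than every remaining $p_j$, re-attaching $p_0$ (with its original leaves) to whichever vertex currently carries $l_{e'}$ produces an edge whose orientation is automatically compatible with the ordering of the $p_j$. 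Neither point affects the overall correctness of your argument.
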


\begin{lemma}
 Let $x \in \curly{H}_n$. Then $\hwt{1}(x,p)$ is connected in codimension one.
\begin{proof}
 We prove this by induction on $n$. The case $n = 5$ was already covered in lemma \ref{lemma_connected_five}. Also note that for $n=4$ the Hurwitz cycle $\hwt{1}(x,p)$ is by definition equal to a Psi class and hence a fan curve.

So assume $n > 5$ and let $q$ be a vertex of $\hwt{1}(x,p)$ with corresponding rational curve $C$. We want to show that it is connected to the standard cover on $p_0$. First, we prove the following technical statement:

\emph{1) Let $e$ be a bounded edge connecting $p_0$ and some $p_j$, such that $C_e(p_j)$ contains the four-valent vertex. Let $C'$ be the split cover at $e$ with degree $x' = (x_1',\dots,x_m')$. Let $P = \{p_1',\dots,p_m'\}$ be the set of contracted ends in $C'$ and assume $p_1' < \dots < p_m'$. Then $C$ is connected to the cover $C''$, obtained in the following way: first, remove all leaves and contracted ends contained in $C'$ from $C$ together with any bounded edges that are attached to them. Then attach all $p \in P$ as an ordered chain to $p_0$, i.e.\ $p_1'$ to $p_0$, $p_2'$ to $p_1', \dots$ etc. Assume the leaves in $C'$ have weights $x_{i_1} \leq \dots \leq x_{i_{m-1}}$. Attach leaf $i_1$ to $p_0$, $i_2$ to $p_1'$ and so on (see figure \ref{figure_branch_sort}).}

\begin{figure}[ht]
 \centering
  \begin{tikzpicture}
   \matrix[column sep = 2mm, row sep = 5mm]{
      \draw[<-,shorten <= 2pt] (0,0) node[above = 2pt]{$p_1$} node[left = 20pt]{$C := $}-- (1,0) node[above=2pt]{$p_0$};
      \draw[->, shorten >= 2pt, color = red] (1,0) -- (1.5,0) node[above]{$e$} -- (2,0) node[above=2pt,color = black]{$p_2$};
      \draw[->,shorten >= 2pt] (2,0) -- (2.5,0.3) node[above=2pt]{$p_3$};
      \draw[->, shorten >= 2pt] (2,0) -- (2.5,-0.3) node[below=3pt]{$p_4$};
      \draw (0,0) -- (-0.5,0.3) node[left]{1};
      \draw (0,0) -- (-0.5,-0.3) node[left]{1};
      \draw (1,0) -- (0.7,-0.5) node[left]{-3};
      \draw (2,0) -- (1.7,-0.5) node[left]{-3};
      \draw (2.5,0.3) -- (3,0.6) node[right]{4};
      \draw (2.5,0.3) -- (3,0.3) node[right]{-2};
      \draw (2.5,-0.3) -- (3,-0.3) node[right]{4};
      \draw (2.5,-0.3) -- (3,-0.6) node[right]{-2}; 
      \fill[black] (0,0) circle (2pt);
      \fill[black] (1,0) circle (2pt);
      \fill[black] (2,0) circle (2pt);
      \fill[black] (2.5,0.3) circle (2pt);
      \fill[black] (2.5,-0.3) circle (2pt);
      & \draw (0,0) node{$\stackrel{(1)}{\longrightarrow}$}; &
      \draw[->,shorten >= 2pt] (0,0) node[above=2pt]{$p_2$} node[right = 50pt]{$=: C'$}-- (0.5,0.3) node[above=2pt]{$p_3$};
      \draw[->, shorten >= 2pt] (0,0) -- (0.5,-0.3) node[below=3pt]{$p_4$};
      \draw (0,0) -- (-0.5,-0.3) node[left]{-3};
      \draw[color = red] (0,0) -- (-0.5,0.3) node[left]{$l_e = -1$};
      \draw (0.5,0.3) -- (1,0.6) node[right]{4};
      \draw (0.5,0.3) -- (1,0.3) node[right]{-2};
      \draw (0.5,-0.3) -- (1,-0.3) node[right]{4};
      \draw (0.5,-0.3) -- (1,-0.6) node[right]{-2}; 
      \fill[black] (0,0) circle (2pt);
      \fill[black] (0.5,0.3) circle (2pt);
      \fill[black] (0.5,-0.3) circle (2pt);
      \\      
      & \draw (0,0) node{$\stackrel{(2)}{\longrightarrow}$}; & 
      \draw[->, shorten >= 2pt] (0,0) node[above=2pt]{$p_2$} -- (1,0) node[above=2pt]{$p_3$};
      \draw[->, shorten >= 2pt] (1,0) -- (2,0) node[above=2pt]{$p_4$};
      \draw (0,0) -- (-0.5,0.3) node[left]{$-2$};
      \draw (0,0) -- (-0.5,-0.3) node[left]{$-3$};
      \draw (1,0) -- (0.7,-0.5) node[left]{$-2$};
      \draw[color = red] (1,0) -- (1.3,-0.5) node[right]{$l_e = -1$};
      \draw (2,0) -- (2.5,0.3) node[right]{$4$};
      \draw (2,0) -- (2.5,-0.3) node[right]{$4$};
      \fill[black] (0,0) circle (2pt);
      \fill[black] (1,0) circle (2pt);
      \fill[black] (2,0) circle (2pt);
      \\
      & \draw (0,0) node{$\stackrel{(3)}{\longrightarrow}$}; & 
      \draw[->, shorten >= 2pt] (0,0) node[above=2pt]{$p_2$} -- (1,0) node[above=2pt]{$p_3$};
      \draw[->, shorten >= 2pt] (1,0) -- (2,0) node[above=2pt]{$p_4$};
      \draw[color = red] (0,0) -- (-0.5,0.3) node[left]{$l_e = -1$};
      \draw (0,0) -- (-0.5,0) node[left]{$-2$};
      \draw (0,0) -- (-0.5,-0.3) node[left]{$-3$};
      \draw (1,0) -- (0.7,-0.5) node[left]{$-2$};
      \draw (2,0) -- (2.5,0.3) node[right]{$4$};
      \draw (2,0) -- (2.5,-0.3) node[right]{$4$};
      \fill[black] (0,0) circle (2pt);
      \fill[black] (1,0) circle (2pt);
      \fill[black] (2,0) circle (2pt);
      \\
      \draw[<-,shorten <= 2pt] (0,0) node[above = 2pt]{$p_1$} -- (1,0) node[above=2pt]{$p_0$};
      \draw[->, shorten >= 2pt, color = red] (1,0) -- (1.5,0) node[above]{$e$} -- (2,0) node[above=2pt,color = black]{$p_2$};
      \draw[->,shorten >= 2pt] (2,0) -- (3,0)node[above=2pt]{$p_3$};
      \draw[->,shorten >= 2pt] (3,0) -- (4,0)node[above=2pt]{$p_4$};
      \draw (0,0) -- (-0.5,0.3) node[left]{1};
      \draw (0,0) -- (-0.5,-0.3) node[left]{1};
      \draw (1,0) -- (0.7,-0.5) node[left]{-3};
      \draw (2,0) -- (1.7,-0.5) node[left]{-3};
      \draw (2,0) -- (2.3,-0.5) node[right]{-2};
      \draw (3,0) -- (3.3,-0.5) node[right]{-2};
      \draw (4,0) -- (4.5,0.3) node[right]{4};
      \draw (4,0) -- (4.5,-0.3) node[right]{4};
      \fill[black] (0,0) circle (2pt);
      \fill[black] (1,0) circle (2pt);
      \fill[black] (2,0) circle (2pt);
      \fill[black] (3,0) circle (2pt);
      \fill[black] (4,0) circle (2pt);
      & \draw (0,0) node{$\stackrel{(4)}{\longleftarrow}$};\\
      \draw[<-,shorten <= 2pt] (0,0) node[above = 2pt]{$p_1$} node[left = 20pt]{$C'':=$}-- (1,0) node[above=2pt]{$p_0$};
      \draw[->,shorten >= 2pt] (1,0) -- (2,0) node[above=2pt]{$p_2$};
      \draw[->, shorten >= 2pt] (2,0) -- (3,0)node[above=2pt]{$p_3$};
      \draw[->,shorten >= 2pt] (3,0) -- (4,0)node[above=2pt]{$p_4$};
      \draw (0,0) -- (-0.5,0.3) node[left]{1};
      \draw (0,0) -- (-0.5,-0.3) node[left]{1};
      \draw (1,0) -- (0.7,-0.5) node[left]{-3};
      \draw (2,0) -- (2.3,-0.5) node[right]{-2};
      \draw[color = DarkGreen] (1,0) -- (1.3,-0.5) node[right]{-3};
      \draw (3,0) -- (3.3,-0.5) node[right]{-2};
      \draw (4,0) -- (4.5,0.3) node[right]{4};
      \draw (4,0) -- (4.5,-0.3) node[right]{4};
      \fill[black] (0,0) circle (2pt);
      \fill[black] (1,0) circle (2pt);
      \fill[black] (2,0) circle (2pt);
      \fill[black] (3,0) circle (2pt);
      \fill[black] (4,0) circle (2pt);
      & \draw (0,0) node{$\stackrel{(5)}{\longleftarrow}$};      
      \\
      };
  \end{tikzpicture}
  \caption[justification=justified]{The branch sorting construction: 
   \\(1) Take the split cover $C'$ at $e$. \\(2) Move that split cover to a standard cover using induction. \\(3) Move the splitting leaf to the smallest $p_j$. \\(4) Consider the lift of this cover. \\(5) Move the smallest leaf at $p_1' = p_2$ to $p_0$ to obtain $C''$.
  }\label{figure_branch_sort}
\end{figure}

We know by induction that $C'$ is connected to the standard cover for $x'$ at any $p \in P$. Choose $p$, such that the standard cover at $p$ has the splitting leaf attached to the four-valent vertex. Since the splitting leaf has negative weight, we can move it to the smallest contracted end. This gives us a chain cover $C_2$ connected to $C'$. As in the proof of lemma \ref{lemma_chain_connected}, we can lift the connecting path to a path of covers with degree $x$ by attaching $p_0$ to the splitting leaf. Denote the lift of $C_2$ by $C_2'$. This cover has its four-valent vertex at $p_1'$. Denote by $k$ the smallest leaf at $p_1'$ with respect to $<_x$ and let $\omega$ be the weight of the edge connecting $p_0$ and $p_1'$. By definition $\omega = \sum_{i\in I} x_i$, where $I$ is the set of all leaves contained in $C'$. By construction, $k$ is the minimal element of $I$ with respect to $<_x$. Hence $\omega > k$ and we can move $k$ to $p_0$ to obtain $C''$. 

We can now use this to prove the following:

\emph{2) If $p_0$ has only one bounded edge attached to it, then $C$ is connected to the standard cover at $p_0$.}

We can assume without restriction that $p_0$ is not at the four-valent vertex (otherwise, we can move at least one leaf). We now apply the construction described in 1) to the single bounded edge at $p_0$. This gives us a chain cover for $x$, which by lemma \ref{lemma_chain_connected} is connected to the standard cover.

It remains to prove the following statement, which implies our theorem:

\emph{3) $C$ is always connected to a cover $C'$, in which $p_0$ has only one bounded edge attached to it.}

As any vertex is at most four-valent, $p_0$ can have at most four bounded edges attached to it. First, assume that only two bounded edges $e,e'$ are attached to $p_0$ and their other vertices are attached to contracted ends $p_e \leq p_{e'}$. If $p_0$ is four-valent, we can move $e'$ along $e$ to obtain a valid cover in which $p_0$ has a single bounded edge attached to it. If the four-valent vertex lies behind one of the edges, say $e$, we apply the construction of $1)$ to this edge. This way we obtain a cover in which $p_0$ is still attached to two bounded edges and is also four-valent.

Assume $p_0$ has three bounded edges $e,e',e''$ attached to it, connecting it to contracted ends $p_e \leq p_{e'} \leq p_{e''}$. With the same argument as in the case of two bounded edges, we can assume that the vertex at $p_0$ is four-valent. Now we can move $e'$ along $e$. Thus we obtain a cover where $p_0$ has only two bounded edges attached to it. A similar argument covers the case of four bounded edges (see also figure \ref{figure_p0_leaf} for an illustration in the case of two edges).

\begin{figure}[ht]
 \centering
  \begin{tikzpicture}
   \matrix[column sep = 3mm, row sep = 5mm]{
       \draw[->,shorten >= 2pt] (0,0) node[above = 2pt]{$p_0$} -- (1,0.3) node[above = 2pt]{$p_e$};
       \draw[->, shorten >= 2pt] (0,0) -- (1,-0.3) node[below = 2pt]{$p_{e'}$};
       \draw[->,dotted] (1,0.3) -- (2,0.6) node[above]{\tiny four-valent};
	\draw (0,0) -- (-0.5,0) node[left]{$i$};
	\fill[black] (0,0) circle (2pt);
	\fill[black] (1,0.3) circle (2pt);
	\fill[black] (1,-0.3) circle (2pt);
       & \draw (0,0) node{$\stackrel{1)}{\leadsto}$}; &
      \draw[->,shorten >= 2pt] (0,0) node[above = 2pt]{$p_0$} -- (1,0.3) node[above = 2pt]{$\tilde{p}_e$};
       \draw[->,shorten >= 2pt] (0,0) -- (1,-0.3) node[below = 2pt]{$p_{e'}$};
	\draw (0,0) -- (-0.5,0.3) node[left]{$i$};
	\draw (0,0) -- (-0.5,-0.3) node[left]{$j$};
	\fill[black] (0,0) circle (2pt);
	\fill[black] (1,0.3) circle (2pt);
	\fill[black] (1,-0.3) circle (2pt);
      & \draw (0,0) node{$\stackrel{\textnormal{if }p_{e'} > \tilde{p}_e}{\leadsto}$}; &
      \draw[->,shorten >= 2pt] (0,0) node[above = 2pt]{$p_0$} -- (1,0.3) node[above = 2pt]{$\tilde{p}_e$};
      \draw[->,shorten >= 2pt] (1,0.3) -- (2,0) node[above]{$p_{e'}$};
      \draw (0,0) -- (-0.5,0.3)node[left]{$i$};
	\draw (0,0) -- (-0.5,-0.3)node[left]{$j$};
	\fill[black] (0,0) circle (2pt);
	\fill[black] (1,0.3) circle (2pt);
	\fill[black] (2,0) circle (2pt);	
	\\
      };
  \end{tikzpicture}
  \caption{How to reduce the number of bounded edges at $p_0$: first move the four-valent vertex to $p_0$ using the construction from 1). Then move one bounded edge along another according to the size of the $p_e$.}\label{figure_p0_leaf}
\end{figure}
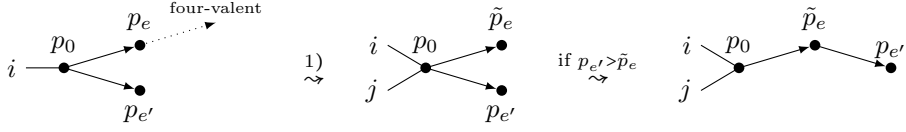

\end{proof}
\end{lemma}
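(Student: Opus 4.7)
The plan is to induct on $n$, with $n=5$ handled by Lemma \ref{lemma_connected_five} (and $n=4$ being a Psi class, a trivially connected fan curve). For $n > 5$, fix a vertex $q$ of $\hwt{1}(x,p)$ with curve $C$, and show it is connected to the standard cover at $p_0$. The idea is to reduce to a chain cover, since by Lemma \ref{lemma_chain_connected} all chain covers are connected to each other and in particular to the standard cover at $p_0$.

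The key intermediate step I would establish is a \emph{branch sorting} construction: suppose $p_0$ has exactly one bounded edge $e$ attached to it, going to some $p_j$, and let $C'$ be the split cover of $C$ at $e$. Then $C'$ is a vertex type cover on $m < n$ leaves, so by induction it is connected to any chain cover of $x'$. Among these chain covers we pick one with the splitting leaf $l_e$ attached to the (unique) end of smallest $p$-coordinate; because $l_e$ has the appropriate sign (its orientation is dictated by the original orientation of $e$, which pointed away from $p_0$), this move can be justified by considering each intermediate vertex type and lifting it back to a cover on $n$ leaves by re-attaching $p_0$ via a bounded edge to the splitting leaf. The result is a chain cover of $x$, which connects $C$ to a standard cover by Lemma \ref{lemma_chain_connected}.

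It then remains to prove that any $C$ is connected to a cover in which $p_0$ has only one bounded edge. This is a straightforward case analysis on the number (two, three, or four) of bounded edges at $p_0$: using the same splitting-and-induction trick one can first move the four-valent vertex to $p_0$, and then one of the bounded edges at $p_0$ can legally be moved along another. Here one must check that the resulting move does not reverse an edge orientation or conflict with the ordering of the $p_i$; this is essentially automatic because after moving the four-valent vertex to $p_0$, any leaf one slides inward has the correct sign to preserve orientations on the edges it crosses.

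The main obstacle in carrying this out is tracking legality of moves, as discussed in Remark \ref{remark_bad_moves}: a priori the underlying combinatorial moves in $\mk{n}$ need not preserve membership in $\hwt{1}(x,p)$, because edge orientations (hence the ordering of the $p_i$) are constrained by the weights. The delicate point is checking, in the branch sorting construction, that when we lift the inductively connecting path from $C'$ to its chain form by re-attaching $p_0$, every intermediate lifted cover is still a valid element of $\hwt{1}(x,p)$. This works because $p_0$ is the smallest $p$-value, so the edge joining $p_0$ to the splitting leaf always points away from $p_0$ with the correct weight, and none of the moves internal to $C'$ affect that edge. The final step--moving the smallest leaf at $p_1'$ across to $p_0$ to obtain a chain cover--is legal for the same minimality reason, using that the weight of the spliced-in edge exceeds the weight of the smallest leaf behind it.
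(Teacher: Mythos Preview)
Your proposal is correct and follows essentially the same approach as the paper: induction on $n$ with base cases $n=4,5$, a branch-sorting construction via the split cover at an edge incident to $p_0$ (using induction on the smaller cover and lifting back by re-attaching $p_0$ to the splitting leaf), followed by a case analysis reducing the number of bounded edges at $p_0$ to one. The only cosmetic difference is that the paper states the branch-sorting lemma in a slightly more general form (for any edge at $p_0$ with the four-valent vertex on the far side, not just when $p_0$ has a single bounded edge), which it then reuses directly in the edge-reduction step; you achieve the same effect by invoking ``the same splitting-and-induction trick'' there.
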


\begin{theorem}\label{main_thm_connected}
 For all $k$, $p$ and $x$, the cycles $\hwt{k}(x,p)$ and $\hw{k}(x,p)$ are connected in codimension one.
\begin{proof}
 Note that it suffices to show the statement for $\hwt{k}(x,p)$, since $\hw{k}(x,p) = \ft(\hwt{k}(x,p))$ and connectedness in codimension one is independent of the chosen polyhedral structure.

The general idea of the proof is that for larger $k$ we mark fewer vertices with contracted ends and thus have more degrees of freedom to \enquote{move around}, so we can apply induction on $k$.

Similar to definition \ref{connect_defn_std} we define a \emph{sorted maximal cover} for $x$ on $S$, with $S \subseteq [n-2], \abs{S} = n-2-k$: we obtain a trivalent curve by attaching the leaves to a chain of $n-3$ bounded edges sorted according to the ordering $<_x$. We number the vertices $\{1,\dots,n-2\}$ (from lowest leaf to highest). We then attach the contracted ends $p_j$, in order of their size, to the vertices with numbers in $S$ (see figure \ref{connect_fig_std_max} for an example).

\begin{figure}[ht]
 \centering
 \begin{tikzpicture}
   \draw (0,0) -- (4,0) ;
   \draw (0,0) -- (-0.5,0.3) node[left]{$x_5$};
   \draw (0,0) -- (-0.5,-0.3) node[left]{$x_4$};
   \draw (1,0) -- (0.7,-0.5) node[below]{$x_1$};
   \draw (2,0) -- (2,-0.5) node[below]{$x_6$};
    \draw (3,0) -- (3.3,-0.5) node[below]{$x_7$};
    \draw (4,0) -- (4.5,0.3) node[right]{$x_2$};
    \draw (4,0) -- (4.5,-0.3) node[right]{$x_3$};
   \draw (0,0) node[above = 3pt]{$p_0$};
    \draw (2,0) node[above = 3pt]{$p_1$};
   \draw (3,0) node[above = 3pt]{$p_2$};
   \fill[black] (0,0) circle (2pt);
   \fill[black] (2,0) circle (2pt);
   \fill[black] (3,0) circle (2pt);
 \end{tikzpicture}
 \caption{The sorted maximal cover in $\hw{2}(x)$ for $x = (1,2,3,-3,-5,1,1)$ on $S = \{1,3,4\}$}\label{connect_fig_std_max}
\end{figure}
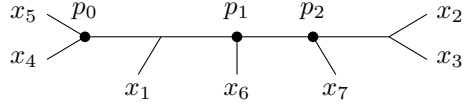

It is easy to see that all sorted maximal covers are connected in codimension one: assume that $(j-1) \notin S \ni j$ (i.e.\ there is a contracted end at vertex $j$ but none at vertex $(j-1)$). Then the sorted cover on $(S \wo \{j\}) \cup \{j-1\}$ shares a codimension one face with this cover, obtained by shrinking the edge between the two vertices $(j-1),j$ to length 0. In this manner we see that every sorted cover is connected to the sorted maximal cover on $S = \{1,\dots,n-2-k\}$.

Now we want to see that every maximal cell $\sigma$ is connected to the maximal cone of a sorted cover. The cell $\sigma$ corresponds to a trivalent curve, with some of the vertices marked with contracted ends $p_0,\dots,p_{n-3-k}$. We now add a further marking $q \in \R$ on an arbitrary vertex such that it is compatible with the edge directions. This gives us an element of $\hwt{k-1}(x,p)$. By induction, the corresponding cell is connected to a sorted cover on $S'$, with $\abs{S'} = n-3-k$. We can \enquote{lift} each intermediate step in the connecting path to a valid cover in $\hwt{k}(x,p)$ simply by forgetting the mark $q$. Thus we have connected $\sigma$ to a sorted maximal cover.
\end{proof}
\end{theorem}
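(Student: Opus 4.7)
The plan is to reduce everything to $\hwt{k}(x,p)$ and then induct on $k$. Since $\hw{k}(x,p) = \ft_*(\hwt{k}(x,p))$ and the forgetful morphism is surjective onto its image, any codimension-one connection in the source descends to one in the image; equivalently, connectedness in codimension one is preserved under pushforward and is independent of the chosen polyhedral structure. So it suffices to handle $\hwt{k}(x,p)$, and by the rational-equivalence argument from the remark after the definition of $\hwt{k}$, we may assume $p_0 < p_1 < \dots < p_{N-1}$.

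The base case $k=1$ is the previous lemma, so assume $k \geq 2$. As a combinatorial target I would introduce, analogously to Definition \ref{connect_defn_std}, a \emph{sorted maximal cover for $x$ on $S$}, where $S \subseteq \{1,\dots,n-2\}$ has size $n-2-k$: arrange all leaves in a chain of $n-3$ bounded edges, attaching the $n$ leaves to the $n-2$ internal vertices (numbered left to right) in the order $<_x$ of Definition \ref{connect_defn_std}, and then place the contracted ends $p_0 < \dots < p_{n-3-k}$, in order, on the vertices indexed by $S$. Exactly as for chain covers this is a valid Hurwitz cover, because the ordering of leaves by $<_x$ forces each bounded edge to point in the direction dictated by the $p_j$.

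The proof then has two steps. First, all sorted maximal covers are connected among themselves. If $j \in S$ but $j-1 \notin S$, then shrinking the bounded edge between vertices $j-1$ and $j$ to length $0$ produces a codimension-one cell shared with the sorted cover on $(S \setminus \{j\}) \cup \{j-1\}$. Iterating this sliding move connects every sorted cover to the canonical one on $S = \{1,\dots,n-2-k\}$. Second, every maximal cell $\sigma$ of $\hwt{k}(x,p)$ is connected to some sorted maximal cover. Pick any vertex $v$ of the curve corresponding to $\sigma$ that carries no contracted end (such a vertex exists since $k \geq 1$ leaves strictly more vertices than marked ones), and assign it an arbitrary generic value $q \in \R$ compatible with the edge directions. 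This realizes $\sigma$ as a cell of $\hwt{k-1}(x, (p_0,\dots,p_{N-1},q))$, which by induction is connected in codimension one to a sorted maximal cover on some $S' \subseteq \{1,\dots,n-2\}$ of size $n-1-k$. Every step in the connecting path uses a codimension-one move that either contracts a bounded edge or resolves a four-valent vertex, and forgetting the marking $q$ lifts each such move to a valid codimension-one move in $\hwt{k}(x,p)$ — the image of the path lies in $\hwt{k}(x,p)$ because forgetting a marked vertex never violates the remaining evaluation conditions. The endpoint is a sorted cover on $S' \setminus \{\,\text{vertex of } q\,\}$, completing the induction.

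The main obstacle is the lifting step: one must verify that forgetting $q$ really sends each intermediate codimension-one adjacency to an adjacency in $\hwt{k}(x,p)$ rather than collapsing cells or identifying them in an uncontrolled way. Since the cells of $\hwt{k-1}$ and $\hwt{k}$ differ only by whether a single vertex is tagged, and the combinatorial adjacency of cells is preserved under the forgetful map provided the contracted end attached at $q$ is not the one being moved, a careful choice of $v$ (placed at a trivalent vertex away from the four-valent resolution point in each step) makes the lift well defined. All other ingredients — the order $<_x$, the chain/split constructions, and the handling of non-generic $p_i$ via the remark at the start of the section — are already in place.
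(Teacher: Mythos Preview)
Your proposal is correct and follows essentially the same route as the paper: reduce to $\hwt{k}$, induct on $k$ with base $k=1$, introduce the sorted maximal covers on subsets $S$, connect them to one another by sliding a mark along adjacent vertices, and connect an arbitrary maximal cell to a sorted one by adding an extra mark $q$, applying the inductive hypothesis in $\hwt{k-1}$, and forgetting $q$. Your extra caution about the lifting step is not needed---forgetting $q$ simply removes one evaluation constraint, so every codimension-one adjacency in $\hwt{k-1}$ either becomes a codimension-one adjacency in $\hwt{k}$ or identifies two cells, both of which preserve connectedness---but it does no harm.
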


\subsection{Irreducibility}\label{section_irreducible}

We now want to see when a Hurwitz cycle is irreducible. We just proved that it is connected in codimension one, so we can try to apply Proposition \ref{intro_prop_irred}. 
To see whether a Hurwitz cycle is locally irreducible, we will make use of our knowledge of the local structure of $\mk{N}$ (Corollary 6.18 in \cite{hatint}): if $\tau$ is a cone of the combinatorial subdivision of $\mk{N}$, corresponding to a curve $C$ with vertices $q_1,\dots,q_k$, then
$$\Star_{\mk{N}}(\tau) = \mk{\val(q_1)} \times \dots \times \mk{\val(q_k)}.$$

\begin{lemma}
 For any $x \in \curly{H}_n$ and pairwise different $p_j$, the cycle $\hwt{k}(x,p)$ is locally at each codimension one face weakly irreducible.
\end{lemma}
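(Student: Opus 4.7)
The plan is to compute the local fan $\Star_{\hwt{k}(x,p)}(\tau)$ explicitly by exploiting the product decomposition of the ambient moduli space $\mk{N}(\R,x) = \mk{n+N}\times\R$ at $\tau$, and then to verify that its weight lattice has rank one. Since the local fan is one-dimensional, weak irreducibility reduces to a finite linear-algebra check on the primitive generators of the rays.

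Let $C$ be a curve representing an interior point of $\tau$ and let $\tau'$ be the minimal cell of $\mk{N}(\R,x)$ containing $\tau$. Because the $p_j$ are pairwise distinct, the only codimension-one degeneration of a maximal cell of $\hwt{k}(x,p)$ is obtained by shrinking a single bounded edge of the underlying $n$-marked curve; a simultaneous collision of two contracted ends $l_i, l_j$ onto the same vertex is ruled out by $p_i \neq p_j$. By Corollary 6.18 of \cite{hatint},
$$\Star_{\mk{N}(\R,x)}(\tau') \;=\; \prod_{j=1}^s \mk{\val(q_j)}\times\R,$$
where $q_1,\dots,q_s$ are the vertices of the $(n+N)$-marked curve underlying $C$. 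Taking stars commutes with intersection and pullback, so each $\Psi_i$ restricts to $\psi(l_i)$ on the factor $\mk{\val(q_{j(i)})}$ that contains $l_i$, and each $\ev_i^*(p_i)$ becomes an affine-linear constraint on the whole product. Trivalent $n$-vertices carrying no $l_i$ contribute a point factor $\mk{3}$; trivalent $n$-vertices that carry some $l_i$ contribute a $\mk{4}$ factor that the localized Psi class $\psi(l_i)$ collapses to a single weighted point. Hence the only positive-dimensional part of the residual fan comes from the unique degenerate vertex produced by the edge-shrinking.

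Two cases remain. In Case A (the degenerate vertex carries no $l_i$) its local factor is $\mk{4}$, which is irreducible: its three rays in a two-dimensional ambient space satisfy exactly the balancing relation. In Case B (the degenerate vertex carries one $l_i$) the vertex is five-valent in the $(n+N)$-marked curve, and after intersecting $\mk{5}$ with $\psi(l_i)$ and with the evaluation equation $\ev_i = p_i$ one obtains a one-dimensional residual fan whose rays correspond to the finitely many valid resolutions of the five-valent vertex compatible with the prescribed image. I expect Case B to be the main obstacle: weak irreducibility requires that these rays span a subspace of codimension exactly one in their ambient vector space, equivalently that their primitive generators satisfy only the balancing relation. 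I would prove this by a direct enumeration modelled on the casework in the proof of Lemma~\ref{lemma_connected_five}, applied to the local degree vector $\tilde{x}$ read off from the slopes of the four $n$-edges meeting at the degenerate vertex; the distinctness of the $p_j$ is precisely what prevents extra resolutions that would introduce additional relations. Combining the two cases, the local fan is a product of weakly irreducible pieces with 0-dimensional weighted points, and is therefore itself weakly irreducible.
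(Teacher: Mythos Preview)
Your overall strategy---localize via the product decomposition $\Star_{\mk{n+N}}(\tau')=\prod_j\mk{\val(q_j)}$, observe that all factors except the one at the unique degenerate vertex collapse to weighted points, and then analyse the remaining factor---is exactly the paper's approach, and your Case~A agrees with it verbatim.

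Case~B, however, is where your write-up goes off track. The evaluation constraint $\ev_i^*(p_i)$ does \emph{not} cut the local $\mk{5}$ factor: that constraint is absorbed by the $\R$-coordinate together with the other evaluation conditions, so the one-dimensional residual at the degenerate vertex is precisely $\psi(l_i)\cdot\mk{5}$, nothing less. In particular \emph{all six} resolutions of the five-valent vertex (three splittings of the four non-contracted edges, times two placements of $l_i$) appear as rays; there is no further ``compatibility with the prescribed image'' to check, and your suggestion that only some resolutions survive is incorrect. Labelling $l_i$ as leaf~$5$, the six normal-vector projections are (multiples of) the rays $v_{\{i,j\}}\in\mk{5}$ with $i,j\in\{1,2,3,4\}$---i.e.\ exactly the rays of the Psi class $\psi_5\subseteq\mk{5}$.

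Given this identification, the rank-one statement you want is not a casework problem at all: it is the known irreducibility of $\psi_5\cdot\mk{5}$, proved in \cite{kmpsiclasses} (these six rays admit, up to scaling, a unique balancing). The paper simply cites this. Your proposed enumeration ``modelled on Lemma~\ref{lemma_connected_five}'' would in principle work---one can verify directly that the six vectors $v_{\{i,j\}}$, $i,j\le 4$, span a $5$-dimensional space in the ambient of $\mk{5}$, hence satisfy exactly one linear relation---but it is both more laborious and obscures the conceptual point that the local picture is just a Psi class in $\mk{5}$, independent of the particular edge slopes $\tilde{x}$.
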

\begin{proof}
  Let $\tau$ be a codimension one cell of $\hwt{k}(x,p)$ and $C_\tau$ the corresponding combinatorial type. Since we chose the $p_j$ to be pairwise different, $C_\tau$ has exactly one vertex $v$ adjacent to four bounded edges or non-contracted leaves. Depending on whether a contracted end is also attached, the vertex is either four- or five-valent, corresponding to an $\curly{M}_4$- or $\curly{M}_5$-coordinate.

Denote by $S := \Star_{\hwt{k}(x)}(\tau)$. First let us assume that no contracted end is attached to $v$. Then there are three maximal cones adjacent to $\tau$, corresponding to the three different possible resolutions of $v$. The projections of the normal vectors to the $\curly{M}_4$-coordinate of $v$ are (multiples of) the three rays of $\curly{M}_4$. In particular there is only one possible way to assign weights to these rays so that they add up to 0. Hence the rank of $\Omega_S$ is 1, showing that $S$ is a multiple of an irreducible cycle.

Now assume there is a contracted end $p$ at $v$ and four edges/non-contracted ends. Then there are six maximal cones adjacent to $\tau$: consider $v$ as a four-valent vertex with an additional point for the contracted end. Then we still have three possibilities to resolve $v$, but in each case we have two possibilities to place the additional point (see figure \ref{figure_irred_six_res}). Now label the four ends and $p$ with numbers $1,\dots,5$ and assume $p$ is labeled with $5$. Then the projections of the normal vectors are multiples of the vectors $v_{\{i,j\}} \in \curly{M}_5$ with $i,j \neq 5$. The set of these vectors has been studied in \cite{kmpsiclasses} and it is shown there that there is only one way to assign weights to these rays such that they add up to 0.

\begin{figure}[ht]
 \centering
 \begin{tikzpicture}
     \matrix[column sep = 5mm, row sep = 3mm]{
	& 
	\draw (0,0) -- (-0.5,0.5) node[left]{1};
	\draw (0,0) -- (-0.5,-0.5) node[left]{2};
	\draw (0,0) -- (0.5,-0.5) node[right]{3};
	\draw (0,0) -- (0.5,0.5) node[right]{4};
	\draw[dotted] (0,0) -- (0,0.5) node[above]{$p$}; 
	\fill[black] (0,0) circle (2pt);
	& 
\\
& \draw[->] (0,0) -- (0,-0.5);
\\
	\draw (-0.2,0) -- (0.2,0);
	\draw[dotted] (-0.2,0) -- (-0.2,0.5) node[above]{$p$};
	\draw (-0.2,0) -- (-0.7,0.5) node[left]{1};
	\draw (-0.2,0) -- (-0.7,-0.5) node[left]{2};
	\draw (0.2,0) -- (0.7,0.5) node[right]{3};
	\draw (0.2,0) -- (0.7,-0.5) node[right]{4}; 
	\fill[black] (-0.2,0) circle (2pt);
	&
\draw (-0.2,0) -- (0.2,0);
	\draw[dotted] (-0.2,0) -- (-0.2,0.5) node[above]{$p$};
	\draw (-0.2,0) -- (-0.7,0.5) node[left]{1};
	\draw (-0.2,0) -- (-0.7,-0.5) node[left]{2};
	\draw (0.2,0) -- (0.7,0.5) node[right]{3};
	\draw (0.2,0) -- (0.7,-0.5) node[right]{4}; 
	\fill[black] (-0.2,0) circle (2pt);
	&
\draw (-0.2,0) -- (0.2,0);
	\draw[dotted] (-0.2,0) -- (-0.2,0.5) node[above]{$p$};
	\draw (-0.2,0) -- (-0.7,0.5) node[left]{1};
	\draw (-0.2,0) -- (-0.7,-0.5) node[left]{2};
	\draw (0.2,0) -- (0.7,0.5) node[right]{3};
	\draw (0.2,0) -- (0.7,-0.5) node[right]{4}; 	
	\fill[black] (-0.2,0) circle (2pt);
\\
\draw (-0.2,0) -- (0.2,0);
	\draw[dotted] (0.2,0) -- (0.2,0.5) node[above]{$p$};
	\draw (-0.2,0) -- (-0.7,0.5) node[left]{1};
	\draw (-0.2,0) -- (-0.7,-0.5) node[left]{2};
	\draw (0.2,0) -- (0.7,0.5) node[right]{3};
	\draw (0.2,0) -- (0.7,-0.5) node[right]{4}; 
	\fill[black] (0.2,0) circle (2pt);
	&
\draw (-0.2,0) -- (0.2,0);
	\draw[dotted] (0.2,0) -- (0.2,0.5) node[above]{$p$};
	\draw (-0.2,0) -- (-0.7,0.5) node[left]{1};
	\draw (-0.2,0) -- (-0.7,-0.5) node[left]{2};
	\draw (0.2,0) -- (0.7,0.5) node[right]{3};
	\draw (0.2,0) -- (0.7,-0.5) node[right]{4}; 
	\fill[black] (0.2,0) circle (2pt);
	&
\draw (-0.2,0) -- (0.2,0);
	\draw[dotted] (0.2,0) -- (0.2,0.5) node[above]{$p$};
	\draw (-0.2,0) -- (-0.7,0.5) node[left]{1};
	\draw (-0.2,0) -- (-0.7,-0.5) node[left]{2};
	\draw (0.2,0) -- (0.7,0.5) node[right]{3};
	\draw (0.2,0) -- (0.7,-0.5) node[right]{4}; 	
	\fill[black] (0.2,0) circle (2pt);
\\
      };
 \end{tikzpicture}
\caption{The six possible resolutions of a four-valent vertex with a contracted end.}\label{figure_irred_six_res}
\end{figure}
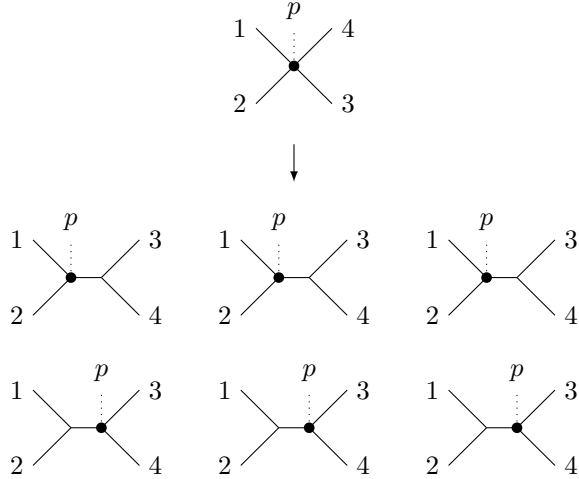
\end{proof}

\begin{corollary}\label{main_cor_irred}
  For any $x \in \curly{H}_n$ and any pairwise different $p_j$, $\hwt{k}(x,p)$ is weakly irreducible.
\end{corollary}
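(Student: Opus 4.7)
The plan is to deduce the corollary directly from the preceding two results by invoking the general principle stated in Proposition \ref{intro_prop_irred}. That proposition tells us that weak irreducibility follows from the combination of local weak irreducibility at all codimension one faces together with global connectedness in codimension one, so we already have both ingredients on the table.

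More concretely, I would first note that Theorem \ref{main_thm_connected} supplies connectedness in codimension one for $\hwt{k}(x,p)$ for arbitrary $x$, $k$ and $p$, and therefore in particular under the genericity assumption on the $p_j$. Next, the lemma immediately preceding the corollary shows that for pairwise distinct $p_j$, the star $\Star_{\hwt{k}(x,p)}(\tau)$ at every codimension one face $\tau$ is weakly irreducible, because in each of the two combinatorial cases (a four-valent vertex without a marked contracted end, or a four-valent vertex with an additional contracted end attached) the local weight lattice turned out to have rank one.

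Having both properties available, I would then simply apply Proposition \ref{intro_prop_irred} to conclude that $\hwt{k}(x,p)$ is weakly irreducible. No new combinatorial argument is needed, since the heavy lifting has been done in the preceding sections.

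There is essentially no obstacle left at this stage; the only subtlety to mention explicitly is that pairwise distinctness of the $p_j$ is used only for the local statement, while the connectedness result holds without this hypothesis, and that this is precisely why the genericity assumption cannot be dropped at the level of the corollary (as will indeed be illustrated by the computational counterexamples announced for $\hw{k}(x,p)$ later in the section).
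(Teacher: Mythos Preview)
Your proposal is correct and matches the paper's approach exactly: the corollary is stated without proof precisely because it follows immediately from Proposition~\ref{intro_prop_irred} applied with Theorem~\ref{main_thm_connected} (connectedness in codimension one) and the preceding lemma (local weak irreducibility for pairwise different $p_j$). Your remark on why the genericity hypothesis is needed only for the local statement is also accurate and anticipates the counterexamples that follow.
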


\begin{ex}\label{hurwitz_ex_irred}
We now want to see that this is the strongest possible statement (see also the subsequent \polymake\ example). 
\begin{itemize}
 \item \textbf{Non-generic points: } Let $n = 5, k = 1,  x = (1,1,1,1,-4)$. If we choose $p_0 = p_1 = 0$, then $\hwt{1}(x,p)$ is not irreducible: one can use \atint\ to compute that the rank of $\Omega_{\hwt{1}(x)}$ is 3. 
 \item \textbf{Strict irreducibility: } Let $x' = (1,1,1,1,1,-5), k' = 1$. For $(p_0,p_1,p_2) = (0,1,2)$, we obviously obtain a cycle with weight lattice $\Omega_{\hwt{1}(x',p)}$ of rank 1. However, the $\gcd$ of all weights in this cycle is 2, so it is not irreducible in the strict sense.
 \item \textbf{Unmarked cycles: } Again, choose $x = (1,1,1,1,-4), k' = 1$ and generic points $p_0 = 0,p_1 = 1$. Passing to $\hw{1}(x,p)$, the rank of $\Lambda_{\hw{1}}(x,p)$ is 18. One can also see that $\hw{1}(x,p)$ is not locally irreducible: it contains the two lines $\{ \frac{1}{2} v_{\{1,2\}} + \R_{\geq 0} v_{\{3,4\}}\}, \{\frac{1}{2} v_{\{3,4\}} + \R_{\geq 0} v_{\{1,2\}}\}$, which intersect transversely in the vertex $ \frac{1}{2} v_{\{1,2\}} + \frac{1}{2} v_{\{3,4\}}$.  Locally at this vertex, the curve is just the union of two lines, which is of course not irreducible. However, one can again use the computer to see that there are also vertices of $\hwt{1}(x,p)$ such that the map induced locally by $\ft$ is injective, but such that the image of the local variety at that vertex is not irreducible. 
\end{itemize}

 \begin{algorithm}[ht]
  \caption*{\textbf{polymake example: computing Hurwitz cycles}.\\ 
We compute the Hurwitz cycles from Example \ref{hurwitz_ex_irred}. First, we compute the cycle $\hwt{1}((1,1,1,1,-4),p)$ for $p_0 = p_1 = 0$ (If no points are given, they are set to 0). A basis for its weight space is given as row vectors of a matrix. We then compute $\hwt{1}( (1,1,1,1,1,-5),q)$ for generic points $q = (0,1,2)$ (the first point is always zero in \atint) and display its weight space dimension. Finally we compute $\hw{1}(1,1,1,1,-4)$ for generic points $(0,1)$ and the dimension of its weight space.}
  \flushleft
  
  \texttt{
\setlength{\tabcolsep}{2pt}
  \begin{tabular}{l l}
   \atcommand & \$h1 = hurwitz\_marked\_cycle(1,(new Vector<Int>(1,1,1,1,-4)));\\
   \atcommand & print \$h1-\>WEIGHT\_SPACE-\>rows();\\
   \multicolumn{1}{l}{3}\\
   \atcommand & \$h2 = hurwitz\_marked\_cycle(1,(new Vector<Int>(1,1,1,1,1,-5)),\\
  &(new Vector<Rational>(1,2)));\\
  \atcommand & print \$h2->WEIGHT\_SPACE->rows();\\
  \multicolumn{1}{l}{1}\\
  \atcommand & print gcd(\$h2->TROPICAL\_WEIGHTS);\\
  \multicolumn{1}{l}{2}\\
  \atcommand & \$h3 = hurwitz\_cycle(1,(new Vector<Int>(1,1,1,1,-4)),\\
  &(new Vector<Rational>([1])));\\
  \atcommand & print \$h3->WEIGHT\_SPACE->rows();\\
  \multicolumn{1}{l}{18}\\
  \end{tabular}}
\end{algorithm}

\end{ex}
\begin{remark} 
So far, we haven't found a single example of an irreducible Hurwitz cycle $\hw{k}(x,p)$. If we pick $p = 0$, it is actually obvious that the cycle must be reducible: for any $i = 1,\dots,n$ it contains the Psi class product $\psi_i^{(n-3-k)}$ as a non-trivial $k$-dimensional subcycle. In fact, finding a canonical decomposition, e.g.\ in terms of Psi class products, would be a large step towards finding a higher-dimensional ELSV formula. However, while possible decompositions can be found with \atint, the problem proves computationally infeasible in all but the smallest cases. 
\end{remark}

\subsection{Cutting out Hurwitz cycles}\label{section_hurwitz_cutting}
For intersection-theoretic purposes it is very tedious to have a representation of the cycle $\hw{k}(x)$  only as a push-forward. We would like to find rational functions that successively cut out (recession fans of) Hurwitz cycles directly in the moduli space $\mk{n}$. It turns out that there is a very intuitive rational function cutting out the codimension one Hurwitz cycle $\hw{n-4}(x)$ in $\mk{n}$. Alas, this seems to be the strongest possible statement already that we can make in this generality. For $n \geq 7$ we can find examples where there is \emph{no rational function at all} that cuts out $\hw{n-5}$ from $\hw{n-4}(x)$. It remains to be seen whether there might be other rational functions or piecewise polynomials cutting out lower-dimensional Hurwitz cycles from $\mk{n}$.

Throughout this section we assume $p_i = 0$ for all $i$, i.e.\ $\hw{k}$ is a fan in $\mk{n}$.

\subsubsection{Push-forwards of rational functions}

We already know that $\hwt{n-4}(x)$ can by definition be cut out from $$\ev_0^*(0) \cdot \Psi_0 \cdot \Psi_1 \cdot \mk{2}(\R,x) =: \curly{M}_x$$ by the rational function $\ev_1^*(0)$ (Note that there is an obvious isomorphism $\curly{M}_x \cong \psi_{n+1} \cdot \psi_{n+2} \cdot \mk{n+2}$). The forgetful map $\ft:\mk{2}(\R,x) \to \mk{n}$ now induces a (surjective) morphism of equidimensional tropical varieties (by abuse of notation we also denote it by $\ft$) $$\ft: \curly{M}_x \to \mk{n},$$
which is injective on each cone of $\curly{M}_x$.
We will see that under these conditions, we can actually define the \emph{push-forward} of a rational function. Note that we call a tropical variety $X$ \emph{smooth}, if it is locally at each point isomorphic to a matroidal fan (modulo some linear space). We will not go into the details of matroids and matroidal fans, which can for example be found in \cites{akbergman, fsmatroidpoly, stropicallinear}.

\begin{defn}
 Let $X,Y$ be $d$-dimensional tropical cycles and assume $Y$ is smooth. Let $x \in X$. If $f: X \to Y$ is a morphism, we denote by $f_x$ the induced local map
 $$f_x: \Star_X(x) \to \Star_Y(f(x)) =: V_x.$$
 We define the \emph{mapping multiplicity} of $x$ to be 
 $$m_x := f_x^*(f(x)).$$
 Note that, since $V_x$ is a smooth fan, any two points in it are rationally equivalent by \cite{frdiagonal}*{Theorem 9.5}, so $\deg f_x^*(\cdot)$ is constant on $V_x$. In particular, to compute $m_x$, we can replace $f(x)$ by any point $y$ in a sufficiently small neighborhood.
 
 Now let $g: X \to \R$ be a rational function. We define the \emph{push-forward} of $g$ under $f$ to be the function
 $$f_*g: Y \to \R, y \mapsto \sum_{x: f(x) = y} m_x g(x).$$
\end{defn}

\begin{prop}\label{hurwitz_prop_pushrational}
 Under the assumptions above, $f_*g$ is a rational function on $Y$.
 \begin{proof}
  We can assume without restriction that $X$ and $Y$ have been refined in such a manner that $f$ maps cells of $X$ to cells of $Y$ and $g$ is affine linear on each cell of $X$. Let us first see that $f_*g$ is well-defined:
  
  Let $y \in Y$ and denote by $\tau$ the minimal cell containing it. We want to see that $y$ has only finitely many preimages $x \in X$ with $m_x \neq 0$. Assume there is a cell $\rho$ in $X$ such that $f(\rho) = \tau$, but $\dim(\rho) > \dim(\tau)$, so $f_{\mid \rho}$ is not injective. In particular, all maximal cells $\xi > \rho$ map to a cell of dimension strictly less than $d$. Now let $x \in \relint(\rho)$ with $f(x) = y$. If we pick a point $q \in V_y$ that lies in a maximal cone adjacent to $\tau$, it has no preimage under $f_x$: all maximal cones in $\Star_X(x)$ are mapped to a lower-dimensional cone. It follows that $m_x = 0$.
  
  We now have to show that $f_*g$ is continuous. Let $\sigma$ be a maximal cell of $Y$. Denote by
  $$C_\sigma = \{\xi \in X^{(d)}, f(\xi) = \sigma\}.$$
  Then for each $y \in \relint(\sigma)$ we have
  $$f_*g(y) = \sum_{\xi \in C_\sigma} \omega_X(\xi) \textnormal{ind}(\xi) g(f_{\mid \xi}^{-1}({y})),$$
  where $\textnormal{ind}(\xi) := \abs{\Lambda_\sigma / f(\Lambda_\xi)}$ is the index of $f$ on $\xi$. Since $f_{\mid \xi}$ is a homeomorphism, this is just a sum of continuous maps, so $(f_*g)_{\mid \relint(\sigma)}$ is continuous.
  
  Assume $\tau$ is a cell of $Y$ of dimension strictly less than $d$ and contained in some maximal cell $\sigma$. Let $s:[0,1] \to \sigma$ be a continuous path with:
  \begin{itemize}
   \item $s([0,1)) \subseteq \relint(\sigma)$
   \item $s(1) \in \relint(\tau)$
  \end{itemize}
 We write $y_t := s(t)$ for $t \in [0,1]$. Then we have to show that $\lim_{t \to 1} f_*g(y_t) = f_*g(y_1)$. If we denote by $s_\xi = (f_{\mid \xi}^{-1} \circ s)$ the unique lift of $s$ to any $\xi \in C_\sigma$, we have
 \begin{align*}
    \lim_{t \to 1} f_*g(y_t) &= \lim_{t \to 1} \sum_{\xi \in C_\sigma} \omega_X(\xi) \textnormal{ind}(\xi) g(s_\xi(t)) \\
    &= \sum_{\xi \in C_\sigma} \omega_X(\xi) \textnormal{ind}(\xi) \lim_{t \to 1} g(s_\xi(t)) \\
    &= \sum_{\xi \in C_\sigma} \omega_X(\xi) \textnormal{ind}(\xi) g(\underbrace{\lim_{t \to 1} s_\xi(t)}_{=: x_\xi}),
 \end{align*}
where the last equality is due to the continuity of $g$. Note that $x_\xi$ lies in the unique face $\rho_\xi < \xi$ such that $f(\rho_\xi) = \tau$. 

Conversely, let $\rho$ be any cell of $X$ with $\dim(\rho) = \dim(\tau)$ and $f(\rho) = \tau$. Assume $\rho$ has no adjacent maximal cell mapping to $\sigma$. Then, if we let $x := f_{\mid \rho}^{-1}(y)$, we must again have $m_x = 0$. We define $$C_\tau := \{\rho \in X^{(\dim \tau)}; f(\rho) = \tau \textnormal{ and there exists } \xi > \rho \textnormal{ with } f(\xi) = \sigma\}.$$ Then we have 
\begin{align}
 \notag f_*g(y_1) &= \sum_{ \substack{\rho \in X^{(\dim \tau)}\\f(\rho) = \tau} } m_{f_{\mid \rho}^{-1}(y_1)} g(f_{\mid \rho}^{-1}(y_1))  \\
 &= \sum_{ \rho \in C_\tau } m_{f_{\mid \rho}^{-1}(y_1)} g(f_{\mid \rho}^{-1}(y_1))\label{hurwitz_eq_push} 
\end{align}
If $x_\rho := f_{\mid \rho}^{-1}(y_1)$, then for small $\epsilon$ we have
 $$m_{f_{\mid \rho}^{-1}(y_1)} = \deg f_{x_\rho}^* y_{1-\epsilon} = \sum_{\substack{\xi > \rho\\ f(\xi) = \sigma}} \omega_X(\xi) \textnormal{ind}(\xi).$$ 
 If we plug this into (\ref{hurwitz_eq_push}), we see that each $\xi \in C_\sigma$ occurs exactly once (since $\xi$ cannot have two faces $\rho$ mapping to $\tau$ due to injectivity), so finally we have $\lim_{t \to 1} f_*g(y_t) = f_*g(y_1)$.
 \end{proof}
\end{prop}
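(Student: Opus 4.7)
The plan is to produce a polyhedral structure on $Y$ with respect to which $f_*g$ is affine linear with integer slopes on every maximal cell. I proceed in three stages: (i) a refinement/finiteness reduction, (ii) an explicit formula on the relative interior of each maximal cell of $Y$ which gives local continuity and integer linearity simultaneously, and (iii) the verification of continuity across lower-dimensional strata. For (i), choose a common refinement of $X$ and $Y$ (using \cite{gkmmoduli}*{Construction 2.24}) such that $f$ sends cells of $X$ into cells of $Y$ and $g$ is affine linear on each cell of $X$. If $\rho \in X$ satisfies $\dim \rho > \dim f(\rho)$, then every maximal cell $\xi > \rho$ collapses, i.e. $\dim f(\xi) < d$. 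For $x \in \relint(\rho)$, pulling back a point in the relative interior of a maximal cone of $V_x = \Star_Y(f(x))$ therefore yields no preimages under $f_x$, so $m_x = 0$. The sum defining $f_*g(y)$ hence reduces to a finite sum over preimages lying in cells on which $f$ is injective.

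For (ii), on $\relint(\sigma)$ of a maximal cell $\sigma \subseteq Y$, setting $C_\sigma = \{\xi \in X^{(d)} \mid f(\xi) = \sigma\}$ and $\mathrm{ind}(\xi) = [\Lambda_\sigma : f(\Lambda_\xi)]$, one has
\[
f_*g(y) \;=\; \sum_{\xi \in C_\sigma} \omega_X(\xi)\,\mathrm{ind}(\xi)\, g\bigl(f_{\mid \xi}^{-1}(y)\bigr),
\]
which is a finite sum of integer affine linear, continuous functions of $y$. This gives continuity on $\relint(\sigma)$ and shows that $f_*g$ is integer affine linear on the closure of $\sigma$, so only continuity across strata remains.

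The main obstacle is step (iii), continuity across a lower-dimensional face $\tau < \sigma$. I would approach it by taking a path $s \colon [0,1] \to \overline{\sigma}$ with $s([0,1)) \subseteq \relint(\sigma)$ and $s(1) = y_1 \in \relint(\tau)$. Each $\xi \in C_\sigma$ contains a unique face $\rho_\xi$ with $f(\rho_\xi) = \tau$, and $f_{\mid \xi}^{-1} \circ s$ converges to a point of $\relint(\rho_\xi)$. Grouping the summands of $f_*g(s(t))$ by the face $\rho$ their lifts land in and using continuity of $g$, the limit rewrites as
\[
\sum_{\rho} g\bigl(f_{\mid \rho}^{-1}(y_1)\bigr) \sum_{\substack{\xi > \rho \\ f(\xi) = \sigma}} \omega_X(\xi)\,\mathrm{ind}(\xi),
\]
with $\rho$ ranging over cells of $X$ of dimension $\dim \tau$ mapping onto $\tau$ that have at least one maximal cell of $C_\sigma$ above them. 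The heart of the argument is recognising the inner sum as $m_{f_{\mid \rho}^{-1}(y_1)}$: here smoothness of $Y$ is essential, since by \cite{frdiagonal}*{Theorem 9.5} any two points of the smooth fan $V_x$ are rationally equivalent, allowing $m_x$ to be computed by pulling back a generic point of any maximal cone of $V_x$. Choosing such a point inside $\relint(\sigma)$ produces exactly $\sum_{\xi > \rho,\, f(\xi) = \sigma} \omega_X(\xi)\,\mathrm{ind}(\xi)$. Cells $\rho$ with no adjacent maximal cell in $C_\sigma$ contribute nothing, by the vanishing argument of step (i) applied to an adjacent maximal cell that collapses, so the limit matches $f_*g(y_1)$. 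Combined with (ii), this shows $f_*g$ is a rational function on $Y$.
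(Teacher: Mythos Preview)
Your proposal is correct and follows essentially the same route as the paper: the same refinement, the same vanishing argument for cells on which $f$ is not injective, the same explicit formula $f_*g(y)=\sum_{\xi\in C_\sigma}\omega_X(\xi)\,\mathrm{ind}(\xi)\,g(f_{\mid\xi}^{-1}(y))$ on maximal cells, and the same path-and-regrouping argument for continuity across lower strata, identifying the inner sum with $m_{f_{\mid\rho}^{-1}(y_1)}$ via smoothness of $Y$. Two minor remarks: your justification that cells $\rho$ with no adjacent maximal cell in $C_\sigma$ contribute nothing is not really ``step (i) applied to a collapsing cell'' but rather the immediate consequence of the identity $m_x=\sum_{\xi>\rho,\,f(\xi)=\sigma}\omega_X(\xi)\,\mathrm{ind}(\xi)$ you have just established (the sum is empty); and your explicit observation in (ii) that the formula has integer slopes is a point the paper leaves implicit.
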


\begin{prop}\label{hurwitz_prop_pushequal}
 Let $f:X \to Y$ be a morphism of $d$-dimensional tropical cycles. Assume $Y$ is smooth and $f$ is injective on each cell of $X$. Then
 $$f_*g \cdot Y = f_*(g\cdot X).$$
 \begin{proof}
  By studying this identity locally and dividing out lineality spaces we can assume that:
  \begin{itemize}
   \item $Y$ is a smooth one-dimensional tropical fan.
   \item $X = \coprod_{i=1}^r X_i$ is a disjoint union of one-dimensional tropical fan cycles.
   \item $f_{\mid X_i}: X_i \to Y$ is a linear map.
   \item $g$ is affine linear on each ray of $X_i$.
  \end{itemize}
We write $Z := f_*g \cdot Y$ and $Z' := f_*(g \cdot X)$. We have to show that $\omega_Z(0) = \omega_{Z'}(0)$. We know that
$$ \omega_{Z'}(0) = \sum_{i=1}^r \omega_{g \cdot X_i}(0) = \sum_{i=1}^r \sum_{\rho \in X_i^{(1)}} \omega_{X_i}(\rho) g(u_\rho),$$
where $u_\rho$ is the integer primitive generator of $\rho$. On the other hand we have
\begin{align*}
 \omega_Z(0) &= \sum_{\sigma \in Y^{(1)}} f_*g(u_\sigma) \\
	      &= \sum_{\sigma \in Y^{(1)}} \sum_{i=1}^r \sum_{\substack{\rho \in X_i^{(1)}\\ f(\rho) = \sigma}} \omega_{X_i}(\rho) \textnormal{ind}(\rho) g\left(\frac{u_\rho}{\textnormal{ind}(\rho)}\right).
\end{align*}
Obviously each ray $\rho$ can occur at most once in this sum and by assumption it occurs at least once. Hence we see that $\omega_Z(0) = \omega_{Z'}(0)$.
 \end{proof}
\end{prop}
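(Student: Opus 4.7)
The plan is to show that the equation $f_*g \cdot Y = f_*(g\cdot X)$ holds locally at each codimension-one cell of $Y$, and then reduce to a very explicit computation in the $1$-dimensional setting. Both $f_*g \cdot Y$ and $f_*(g\cdot X)$ are $(d-1)$-dimensional cycles, so after a common refinement it suffices to verify that they assign the same weight to every codimension-one cell $\tau$ of $Y$. Since weights of divisors and of push-forwards are determined locally in the star, I would reduce to studying $\Star_Y(\tau)$ and the collection $\Star_X(\rho)$ for the finitely many cells $\rho$ of $X$ with $f(\rho)=\tau$. The smoothness of $Y$ is inherited by $\Star_Y(\tau)$ (matroidal fans remain matroidal under taking stars), which is crucial because it guarantees that $f_*g$ is again a well-defined rational function via Proposition \ref{hurwitz_prop_pushrational}.

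Next, I would justify the reductions listed by the author: dividing out the common lineality space $V_\tau$ turns $\Star_Y(\tau)$ into a smooth $1$-dimensional fan, and the preimage decomposes as a disjoint union $X = \coprod X_i$ of $1$-dimensional fans because injectivity of $f$ on each cell of $X$ passes to the local picture (no two rays of the same $X_i$ get identified, and cells over distinct preimage points are separated). After this reduction, $f_{\mid X_i} : X_i \to Y$ is a linear map of $1$-dimensional fans, and $g$ may be assumed affine linear on each ray by refining further.

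In the reduced setting the computation becomes bookkeeping. On the one hand, the divisor formula gives
$$\omega_{f_*(g\cdot X)}(0) = \sum_{i} \sum_{\rho \in X_i^{(1)}} \omega_{X_i}(\rho)\, g(u_\rho),$$
since the local maps are injective, so that no push-forward multiplicity is needed once we have taken the divisor inside each $X_i$ (here I would cite the push-forward definition together with the fact that on a single ray $\rho$ mapping linearly to $\sigma$ with index $\text{ind}(\rho)$ we have $f(u_\rho) = \text{ind}(\rho)\, u_\sigma$). On the other hand, expanding the definition of $f_*g$ gives
$$\omega_{f_*g \cdot Y}(0) = \sum_{\sigma \in Y^{(1)}} f_*g(u_\sigma) = \sum_{\sigma \in Y^{(1)}} \sum_{i} \sum_{\substack{\rho \in X_i^{(1)}\\ f(\rho)=\sigma}} \omega_{X_i}(\rho)\, \text{ind}(\rho)\, g\!\left(\frac{u_\rho}{\text{ind}(\rho)}\right),$$
and the affine linearity of $g$ on $\rho$ collapses $\text{ind}(\rho)\, g(u_\rho/\text{ind}(\rho)) = g(u_\rho)$ (the constant term is killed because we evaluate the slope against lattice vectors in the same ray; more precisely, one works with the linear part of $g$ on $\rho$, which is exactly what enters the divisor formula). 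Since every ray $\rho$ of every $X_i$ appears exactly once with $\sigma = f(\rho)$, the two sums coincide.

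The main obstacle is the reduction step, in particular verifying carefully that (i) taking the divisor of $f_*g$ and then pulling back to stars agrees with taking stars first and then forming the divisor, and that (ii) the push-forward of the divisor $g \cdot X$ restricted to a codimension-one cell of $Y$ is controlled by the $1$-dimensional local picture. Both are routine once one has set up the right refinements (so that $f$ sends cells to cells, $g$ is affine linear on each cell of $X$, and $f_*g$ is affine linear on each cell of $Y$), but the identification of constant terms requires a little care because $g$ is only piecewise affine and not linear. The smoothness of $Y$ is used exactly once, in the passage to the local $1$-dimensional picture where rational equivalence of points guarantees that the mapping multiplicities $m_x$ are well-defined; without it, the whole push-forward operation for functions would not even make sense.
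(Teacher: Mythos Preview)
Your proposal is correct and follows essentially the same route as the paper: reduce to the local one-dimensional fan situation by passing to stars and dividing out the lineality space, then match the two explicit sums ray by ray. If anything, you are more careful than the paper on two points it leaves implicit --- the validity of the reduction to the disjoint union of one-dimensional fans, and the observation that only the linear part of $g$ enters the divisor formula, which is what makes $\textnormal{ind}(\rho)\,g(u_\rho/\textnormal{ind}(\rho)) = g(u_\rho)$ legitimate.
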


\begin{ex}
 Note that the assumption that $f$ is injective on each cone is necessary. Consider the morphism depicted in Figure \ref{hurwitz_fig_badpush}: in this case we get that $f_*(g \cdot X) = 4$ and $f_*g\cdot Y = 2$.
 
 \begin{figure}[ht]
  \centering
  \begin{tikzpicture}
   \matrix[column sep = 5mm, row sep = 5mm]{
    \draw (0,0) -- (1,0) node[right]{ $(g' = 1)$};
    \draw (0,0) -- (0,1) node[above]{$(g' = 1)$};
    \draw (0,0) -- (-1,0) node[left]{$(g' = 1)$};
    \draw (0,0) -- (0,-1) node[below]{$(g' = 1)$};
    \fill[black] (0,0) circle(2pt);
    \draw (-3,0) node{$X :=$}; \\
    \draw[->] (0,1) -- (0,0);\\
    \draw (-1,0) node[left]{$( (f_*g)' = 1)$} -- (1,0) node[right]{$((f_*g)' = 1)$};
    \fill[black] (0,0) circle(2pt);
    \draw (-3.5,0) node{$Y :=$};\\
   };
  \end{tikzpicture}
    \caption{A morphism where the push-forward of a function does not give the same divisor as the push-forward of the divisor of this function. All weights are 1 and the function slopes of $g$ and $f_*g$ are given in brackets.}\label{hurwitz_fig_badpush}
 \end{figure}
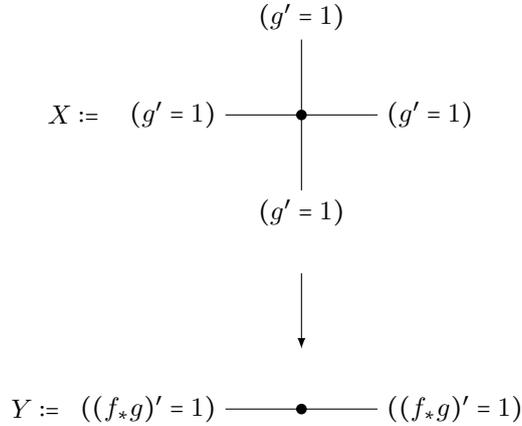
\end{ex}

\subsubsection{Cutting out the codimension one cycle}

By definition we have $$\hw{n-4}(x) = \ft_*(\hwt{n-4}(x)) = \ft_*(\ev_1^*(0) \cdot \curly{M}_x)$$
and we already discussed that $\ft: \curly{M}_x \to \mk{n}$ is a morphism of $(n-3)$-dimensional tropical varieties which is injective on each cone of $\curly{M}_x$. Since $\mk{n}$ is smooth, we immediately obtain the following result:

\begin{corollary}
 The codimension one Hurwitz cycle can be cut out as
 $$\hw{n-4} = (\ft_*(\ev_1^*(0))) \cdot \mk{n}.$$
\end{corollary}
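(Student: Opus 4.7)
The proof I have in mind is essentially a one-line application of Proposition \ref{hurwitz_prop_pushequal}, but the plan is to spell out why every hypothesis is satisfied so that the proposition can be invoked with $g = \ev_1^*(0)$, $X = \curly{M}_x$, $Y = \mk{n}$, and $f = \ft$.

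First I would recall from the definition that
\[
\hw{n-4}(x) \;=\; \ft_*\bigl(\hwt{n-4}(x)\bigr) \;=\; \ft_*\bigl(\ev_1^*(0)\cdot \curly{M}_x\bigr),
\]
so the content of the corollary is exactly the identity $\ft_*(\ev_1^*(0))\cdot \mk{n} = \ft_*(\ev_1^*(0)\cdot \curly{M}_x)$. Hence it suffices to check that the triple $(\ft,\curly{M}_x,\mk{n})$ meets the hypotheses of Proposition \ref{hurwitz_prop_pushequal}: (i) $\ft$ is a morphism of $(n-3)$-dimensional tropical cycles; (ii) $\mk{n}$ is smooth; and (iii) $\ft$ is injective on each cone of $\curly{M}_x$.

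For (i), both $\curly{M}_x$ and $\mk{n}$ have dimension $n-3$: indeed $\curly{M}_x = \ev_0^*(0)\cdot \Psi_0\cdot\Psi_1\cdot \mk{2}(\R,x)$ cuts three independent conditions out of the $n$-dimensional space $\mk{2}(\R,x) = \mk{n+2}\times\R$, and we already observed that $\ft$ is a morphism between these two cycles. Claim (ii) is standard: $\mk{n}$ is (up to the lineality $\Phi_n(\R^n)$) the Bergman fan of the graphical matroid of $K_{n-1}$, so it is matroidal and hence smooth. Claim (iii) was noted explicitly in the paragraph preceding the corollary.

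With the three hypotheses verified, Proposition \ref{hurwitz_prop_pushequal} immediately yields
\[
\ft_*\bigl(\ev_1^*(0)\bigr)\cdot \mk{n} \;=\; \ft_*\bigl(\ev_1^*(0)\cdot \curly{M}_x\bigr) \;=\; \hw{n-4}(x).
\]
So there is no real obstacle; the main work was done in Propositions \ref{hurwitz_prop_pushrational} and \ref{hurwitz_prop_pushequal}, and the only item worth stating carefully is the smoothness of $\mk{n}$, which I would handle either by citing its matroidal description or by noting that it is locally (modulo lineality) isomorphic to $\mk{\val(v_1)}\times\cdots\times \mk{\val(v_k)}$ and invoking smoothness of each factor.
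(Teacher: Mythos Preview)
Your proposal is correct and follows exactly the paper's approach: the corollary is stated immediately after observing that $\ft:\curly{M}_x\to\mk{n}$ is a morphism of $(n-3)$-dimensional varieties, injective on each cone, with smooth target, so Proposition~\ref{hurwitz_prop_pushequal} applies directly. Your write-up simply spells out in more detail the verification of the three hypotheses that the paper leaves implicit in the sentence preceding the corollary.
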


 We now want to describe the rational function $(\ft_*(\ev_{1}^*(0)))$ in more intuitive and geometric terms: 

 \begin{lemma}\label{hurwitz_lemma_adddist}
 Let $C$ be any curve in $\mk{n}$. Given $x \in \curly{H}_n$ this defines a cover of $\R$ up to translation. Pick any such cover $h: C \to \R$. Let $v_1,\dots,v_r$ be the vertices of $C$. Then
  $$(\ft_*(\ev_1^*(0)))(C) = \sum_{i \neq j} (\val(v_i)-2)(\val(v_j)-2)\abs{h(v_i) - h(v_j)}.$$
\begin{proof}
 It suffices to show this for curves in maximal cones. Since $(\ft_*(\ev_1^*(0)))$ is continuous by Proposition \ref{hurwitz_prop_pushrational}, the claim follows for all other cones.
 
 So let $C$ be an $n$-marked trivalent curve with vertices $v_1,\dots,v_{n-2}$. We obtain all preimages in $\curly{M}_x$ by going over all possible choices of vertices $v_i,v_j$ and attaching the additional leaves $l_0$ to $v_i$ and $l_1$ to $v_j$. We denote the corresponding $n+2$-marked curve by $C(i,j)$. Note that $\ev_1$ maps $C(i,j)$ to the image of $l_1$ under the cover obtained by fixing the image of $l_0$ to be 0. We immediately see the following:
 \begin{itemize}
  \item $\ev_1(C(i,i)) = 0$.
  \item $\ev_1(C(i,j)) = - \ev_1(C(j,i))$.
  \item $\abs{\ev_1(C(i,j))} = \abs{h(v_i) - h(v_j)}$
 \end{itemize}
Since $\ev_1^*(0)(x) = \max\{0, \ev_1(x)\}$ and the forgetful map has index 1, the claim follows.
\end{proof}
\end{lemma}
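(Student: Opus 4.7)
The plan is to first prove the formula for trivalent $C$ (a top-dimensional cone of $\mk{n}$) and then extend to arbitrary curves via continuity of the push-forward, which is guaranteed by Proposition \ref{hurwitz_prop_pushrational}.

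For trivalent $C$ with vertices $v_1,\dots,v_{n-2}$, a preimage of $C$ under $\ft: \curly{M}_x \to \mk{n}$ is specified by the choice of vertex $v_i$ to which $l_0$ attaches, the choice of vertex $v_j$ to which $l_1$ attaches, and a translation. The translation is fixed by the factor $\ev_0^*(0)$ in the definition of $\curly{M}_x$, which enforces $h(l_0) = 0$ and hence $h(v_i) = 0$; the Psi-class conditions $\Psi_0, \Psi_1$ are automatic because attaching a leaf to a trivalent vertex produces a four-valent one. Write $C(i,j)$ for the resulting preimage. Under the normalization $h(v_i) = 0$ one has $\ev_1(C(i,j)) = h(v_j) - h(v_i)$, a translation-independent quantity.

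Because $\ft$ is injective on each top-dimensional cone of $\curly{M}_x$ and the induced lattice map has index one --- the ray generators of the cone of $C(i,j)$ in $\mk{n+2}$ correspond to the ray generators of the cone of $C$ in $\mk{n}$ under the forgetful operation on splits --- the mapping multiplicity at each $C(i,j)$ is $1$. The push-forward formula of Proposition \ref{hurwitz_prop_pushrational} therefore yields
\[
(\ft_*(\ev_1^*(0)))(C) = \sum_{(i,j)} \max\{0,\, h(v_j) - h(v_i)\},
\]
summed over all ordered pairs. Diagonal terms vanish, and pairing $(i,j)$ with $(j,i)$ via the identity $\max\{0,a\} + \max\{0,-a\} = \abs{a}$ collapses the sum to $\sum_{\{i,j\}} \abs{h(v_i) - h(v_j)}$, which matches the right-hand side of the claim for trivalent $C$ (all valence factors being $1$).

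For a general curve $C$ with higher-valent vertices, approximate $C$ by a family of trivalent curves $C^{(t)} \to C$ obtained by resolving each higher-valent vertex. A $k$-valent vertex $v$ of $C$ arises as the limit of the $k-2$ internal vertices of a trivalent tree on $k$ leaves, so for each unordered pair $v, v'$ of distinct vertices of $C$ there are exactly $(\val(v)-2)(\val(v')-2)$ pairs of trivalent vertices in $C^{(t)}$ whose distances $\abs{h(v_i^{(t)}) - h(v_j^{(t)})}$ all tend to $\abs{h(v) - h(v')}$, while pairs collapsing to the same vertex of $C$ contribute $0$ in the limit; continuity of $\ft_*(\ev_1^*(0))$ then gives the full formula. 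The main obstacle is the index-one / mapping-multiplicity claim for trivalent cones, which needs both the weights on cones of $\curly{M}_x$ coming from the Psi-class intersections (following the combinatorial description in \cite{kmpsiclasses}) and the elementary lattice observation that primitive ray generators $v_I \in \mk{n+2}$ corresponding to splits separating the $l_i$ descend to the primitive $v_{I \cap \{1,\dots,n\}} \in \mk{n}$ without any non-trivial index.
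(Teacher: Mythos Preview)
Your proof is correct and follows essentially the same route as the paper's: reduce to trivalent curves via continuity of the push-forward (Proposition~\ref{hurwitz_prop_pushrational}), enumerate the preimages $C(i,j)$, and collapse the sum of $\max\{0,\ev_1\}$-values over ordered pairs to a sum of absolute differences using $\max\{0,a\}+\max\{0,-a\}=\abs{a}$. Your explicit limit argument for higher-valent vertices (counting the $(\val(v)-2)(\val(v')-2)$ collapsing pairs) spells out what the paper leaves implicit in the phrase ``the claim follows for all other cones,'' and your discussion of the index-one claim is likewise just a more careful version of the paper's one-line assertion that ``the forgetful map has index~1.''
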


\begin{figure}
 \centering
 \begin{tikzpicture}
 \matrix[column sep = 5mm,ampersand replacement=\&]{
    \draw[<-,shorten <= 2pt] (0,0) -- (1,0);
    \draw[->, shorten >= 2pt] (1,0) -- (2,0);
   \draw (0,0) -- (-0.5,0.3) node[left]{1};
   \draw (0,0) -- (-0.5,-0.3) node[left]{2};
   \draw (1,0) -- (0.7,0.5) node[above]{5};
   \draw(2,0) -- (2.5,0.3) node[right]{4};
   \draw (2,0) -- (2.5,-0.3) node[right]{3};
    \draw (0.5,0) node[below]{\tiny $l = 1$};
    \draw (1.5,0) node[below]{ \tiny $l = 1/2$}; 
    \draw (1,0) node[below=10pt]{$C$};
    \fill[DarkGreen] (0,0) circle (2pt); 
    \fill[blue] (1,0) circle (2pt); 
    \fill[red] (2,0) circle (2pt);\&
    \draw[->] (0,0) -- (1,0); \&
    \draw (0,0.5) -- (2,0.5);
    \draw (2,0.5) -- (3,0.5)node[right]{\tiny 1};
    \draw (2,0.5) -- (3,0.2) node[right]{\tiny 2};
    \draw (0,0.5) -- (1,-0.1) ;
    \draw (1,-0.1) -- (3,-0.1) node[right]{\tiny 3};
    \draw (1,-0.1) -- (3,-0.3) node[right]{\tiny 4};
    \draw (0,0.5) -- (-1,0.5) node[left]{\tiny 5};
    \draw (-1,-1) -- (3,-1) node[right]{\tiny $\R$};
    \fill[blue] (0,0.5) circle (2pt);
    \fill[DarkGreen] (2,0.5) circle (2pt);
    \fill[red] (1,-0.1) circle (2pt);
    \fill[blue, text = black] (0,-1) circle (2pt) node[below = 1.5pt]{\tiny $\alpha$} node[above]{\tiny $a_1$};
    \fill[red, text = black] (1,-1) circle (2pt) node[below]{\tiny $\alpha + 1$} node[above]{\tiny $a_2$};
    \fill[DarkGreen, text = black] (2,-1) circle (2pt) node[below]{\tiny $\alpha+2$} node[above]{\tiny $a_3$};
    \\
  };
\end{tikzpicture}
\caption{We compute $(\ft_*(\ev_1^*(0)))(C)$ for an example. We choose parameters $x = (1,1,1,1,-4)$ and $C = v_{\{1,2\}} + \frac{1}{2} v_{\{3,4\}}$. In this case Lemma \ref{hurwitz_lemma_adddist} tells us that the value of the function at $C$ is $\abs{a_2- a_1} + \abs{a_3 - a_1} + \abs{a_3 - a_2} = 1 + 2 + 1 = 4$.}
\end{figure}
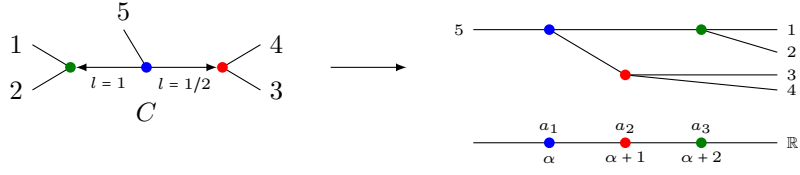

\subsection{Hurwitz cycles as linear combinations of boundary divisors}\label{section_hurwitz_boundary}

In \cite{bcmhurwitz}, the authors present several different representations of $\mathbb{H}_k(x)$. One is given in 

\begin{lemma}[{\cite{bcmhurwitz}*{Lemma 3.6}}]
 $$\mathbb{H}_k(x) = \sum_{\Gamma \in \mathcal{T}_{n-3-k}} \left(m(\Gamma) \varphi(\Gamma) \prod_{v \in \Gamma^{(0)}} (\textnormal{val}(v)-2)\Delta_\Gamma\right),$$
where $\Gamma$ runs over $\mathcal{T}_{n-3-k}$, the set of all combinatorial types of rational $n$-marked curves with $n-3-k$ bounded edges and $\Delta_\Gamma$ is the stratum of all covers with dual graph $\Gamma$. Furthermore, $m(\Gamma)$ is the number of total orderings on the vertices of $\Gamma$ compatible with edge directions and $\varphi(\Gamma)$ is the product over all edge weights.
\end{lemma}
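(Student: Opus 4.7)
The plan is to start from the intersection-theoretic definition of Lemma \ref{intro_hurwitz_def} and compute the product $\prod_{i=1}^{N} \psi_i \ev_i^*([pt])$ on $\overline{M}_{0,N}(x)$, where $N = n-2-k$, before pushing forward by $st$. For a generic choice of $p_1 < \dots < p_N$ in $\P^1$, I would first argue that this class is supported on a finite union of boundary strata parametrizing degenerate relative stable maps whose source is a nodal rational $n$-marked curve with exactly $n-3-k$ nodes. Such maps are precisely the \emph{monodromy graphs} of \cite{cjmhurwitz}: their non-leaf vertices are mapped bijectively to the $p_i$, and forgetting the map together with the additional marked points yields an underlying combinatorial type $\Gamma \in \curly{T}_{n-3-k}$.

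The key step is to compute the contribution of each monodromy graph $\tilde{\Gamma}$ with underlying dual graph $\Gamma$ and to collect the three factors appearing in the statement. The conditions $\ev_i^*([pt])$ are transverse along the edges of the cover, and the corresponding lattice indices multiply to yield $\varphi(\Gamma)$, the product of all edge weights (the weight of an edge $e$ is the local slope $\abs{\sum_{i \in I_e} x_i}$ of the cover, which coincides with the local multiplicity of the evaluation map). The $\psi_i$ factors contribute, for each vertex $v$ of $\Gamma$, the integer $\val(v) - 2$; this is the standard identity for a $\psi$-class supported at a vertex of valence $\val(v)$ once the evaluation conditions have been imposed. Finally, the combinatorial factor $m(\Gamma)$ counts total orderings of the vertices of $\Gamma$ compatible with the edge orientations, since each such ordering corresponds to a unique monodromy graph lifting $\Gamma$ via the assignment $p_i \mapsto \tilde{v}_i$. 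Stabilization $st$ is a bijection on the open locus of each supporting stratum, so the push-forward of the intersection yields precisely $m(\Gamma)\,\varphi(\Gamma)\prod_v (\val(v)-2)\cdot \Delta_\Gamma$.

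The main obstacle I expect is justifying the clean factorization of the local intersection multiplicity at each vertex into the product of an edge-weight contribution from $\ev^*$ and a $(\val(v)-2)$ contribution from $\psi$, since on $\overline{M}_{0,N}(x)$ the $\psi$-classes and evaluation conditions are not a priori independent along the relevant strata. I would handle this by using a cotangent-line comparison to relate $\psi$-classes on the moduli of stable maps to their analogues on $\overline{M}_{0,N}$, and by specializing the $p_i$ (invoking deformation invariance of the class) so that each vertex can be treated in a local model on $\overline{M}_{0,\val(v)}$ where the $\psi$-class computation is standard.
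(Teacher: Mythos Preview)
The paper does not prove this lemma at all: it is quoted verbatim as \cite{bcmhurwitz}*{Lemma 3.6} and used as input for the subsequent tropical discussion. There is therefore no ``paper's own proof'' to compare against; the statement is imported wholesale from Bertram--Cavalieri--Markwig.

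That said, your outline is essentially the argument given in \cite{bcmhurwitz}. The identification of the support with monodromy graphs, the factor $\varphi(\Gamma)$ coming from the lattice indices of the evaluation maps along the edges, the $(\val(v)-2)$ from the $\psi$-class at each vertex, and the combinatorial count $m(\Gamma)$ of compatible orderings are exactly the ingredients assembled there. One point to be careful about: your description of $m(\Gamma)$ as ``each such ordering corresponds to a unique monodromy graph lifting $\Gamma$'' slightly conflates two things. The orderings count the ways to distribute the $N$ marked points (the preimages of the $p_i$) among the vertices of $\Gamma$ compatibly with the edge directions; they do not enumerate distinct monodromy graphs in the sense of \cite{cjmhurwitz}, but rather distinct labelings of a fixed $\Gamma$. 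The ``obstacle'' you flag about separating the $\psi$ and $\ev^*$ contributions is handled in \cite{bcmhurwitz} not by a cotangent-line comparison but by a direct dimension/transversality argument on the relevant strata of $\overline{M}_{0,N}(x)$, which is cleaner than what you propose.
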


There is an obvious, \enquote{naive} tropicalization of this: $\mathcal{T}_{n-3-k}$ corresponds to the codimension $k$ skeleton of $\mk{n}$. We will write $m(\tau) := m(\Gamma_\tau),x_\tau := \varphi(\Gamma_\tau)$ for any codimension $k$ cone $\tau$ and its corresponding combinatorial type $\Gamma_\tau$. The boundary stratum $\Delta_\Gamma$ we translate like this:

\begin{defn}
 
Let $(\curly{X},w)$ be a simplicial tropical fan. For a $d$-dimensional cone $\tau$ generated by rays $v_1,\dots,v_d$ we define rational functions $\varphi_{v_i}$ on $\curly{X}$ by fixing its value on all rays:
$$\varphi_{v_i}(r) = \begin{cases}
                      1, &\textnormal{ if } r = v_i\\
		      0, &\textnormal{ otherwise}
                     \end{cases}$$
for all $r \in \curly{X}^{(1)}$. We then write $\varphi_\tau := \varphi_{v_1} \cdot \dots \cdot \varphi_{v_d}$ for subsequently applying these $d$ functions. In the case of $\curly{X} = \mk{n}$ and $v_i = v_I$, we will also write $\varphi_I$ instead of $\varphi_{v_i}$.

As a shorthand notation we will write $\curly{C}_k$ for all dimension $k$ cells of $\mk{n}$ and $\curly{C}^k$ for all codimension $k$ cells (in its combinatorial subdivision).

Now we define the following divisor of a piecewise polynomial (see for example \cite{fcocycles} for a treaty of piecewise polynomials. For now it suffices if we define them as sums of products of rational functions):

$$D_k(x) := \sum_{\tau \in \curly{C}^k} m(\tau) \cdot x_\tau \cdot \left(\prod_{v \in \Gamma_\tau^{(0)}} (\val(v) -2)\right) \cdot \varphi_\tau \cdot \mk{n}, $$
where $\varphi_\tau = \prod_{v_I \in \tau^{(1)}} \varphi_I$ and the sum is to be understood as a sum of tropical cycles.
\end{defn}

We can now ask ourselves, what the relation between $D_k(x)$ and $\hw{k}(x)$ is. They are obviously not equal: $D_k(x)$ is a subfan of $\mk{n}$ (in its coarse subdivision), but even if we choose all $p_i$ to be equal to make $\hw{k}(x)$ a fan, it will still contain rays in the interior of higher-dimensional cones of $\mk{n}$.

This also rules out \emph{rational equivalence} (as defined in \cite{ahrrationalequivalence}): two cycles are equivalent, if and only if their recession fans are equal. 

But there is another, coarser equivalence on $\mk{n}$, that comes from toric geometry. As was shown in \cite{gmequations}, the classical $M_{0,n}$ can be embedded in the toric variety $X(\mk{n})$ and we have 
\begin{align*}
\textnormal{Cl}(X(\mk{n})) &\cong \textnormal{Pic}(\overline{M}_{0,n})\\
D_I &\mapsto \delta_I,
\end{align*}
where $D_I$ is the divisor associated to the ray $v_I$ and $\delta_I$ is the boundary stratum of curves consisting of two components, each containing the marked points in $I$ and $I^c$ respectively. By \cite{fstoricintersection}, $D_I$ corresponds to some tropical cycle of codimension one in $\mk{n}$ and \cite{rdiss}*{Corollary 1.2.19} shows that this is precisely $\varphi_I \cdot \mk{n}$. Hence the following is a direct translation of numerical equivalence in $\overline{M}_{0,n}$.

\begin{defn}
 Two $k$-dimensional cycles $C,D \subseteq \mk{n}$ are \emph{numerically equivalent}, if for all $k$-dimensional cones $\rho \in \mathcal{C}_k$ we have
$$\varphi_\rho \cdot C = \varphi_\rho \cdot D \in \Z.$$
\end{defn}

\begin{theorem}
 $\hw{k}(x)$ is numerically equivalent to $D_k(x)$.
\begin{proof}
  Note that for a generic choice of $p_i$, the cycle $\hw{k}(x)$ does not intersect any cones of codimension larger than $k$ and intersects all codimension $k$ cones transversely. For the proof we will need the following result from \cite{bcmhurwitz}*{Proposition 5.4}, describing the intersection multiplicity of $\hw{k}(x)$ with a codimension $k$-cell $\tau$: 
$$\tau \cdot \hw{k}(x) = m(\tau) \cdot x_\tau \cdot \prod_{v \in C_\tau^{(0)}} (\val(v) -2). $$
This implies that for any $\rho \in \curly{C}_k$ we have
\begin{align*}
 \varphi_\rho \cdot \hw{k}(x) &= \sum_{\tau \in \curly{C}^k} (\tau \cdot \hw{k}(x)) \cdot \omega_{\varphi_\rho \cdot \mk{n}}(\tau)\\
			&= \sum_{\tau \in \curly{C}^k} m(\tau) \cdot x_\tau \cdot \prod_{v \in C_\tau^{(0)}} (\val(v) -2) \cdot \omega_{\varphi_\rho \cdot \mk{n}}(\tau)\\
			&= \sum_{\tau \in \curly{C}^k} m(\tau) \cdot x_\tau \cdot \prod_{v \in C_\tau^{(0)}} (\val(v) -2) \cdot (\varphi_\tau \cdot \varphi_\rho \cdot \mk{n}) \\
			&= \varphi_\rho \cdot D_k(x),
\end{align*}
where $\omega_{\varphi_\rho \cdot \mk{n}}(\tau) = \varphi_\tau \cdot \varphi_\rho \cdot \mk{n}$ by \cite{fcocycles}*{Lemma 4.7}.

\end{proof}
\end{theorem}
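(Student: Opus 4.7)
The plan is to fix a $k$-dimensional cone $\rho \in \curly{C}_k$ and compute both $\varphi_\rho \cdot \hw{k}(x)$ and $\varphi_\rho \cdot D_k(x)$ as integers, showing directly that they agree. The computation is essentially bookkeeping once the right ingredients are identified, so the main work is recognizing which already-established results to plug in at each stage.

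First I would reduce to generic $p_i$. Since two different choices of $p$ give rationally equivalent (in fact numerically equivalent) cycles, and since $D_k(x)$ does not depend on $p$ at all, it suffices to establish the identity $\varphi_\rho \cdot \hw{k}(x,p) = \varphi_\rho \cdot D_k(x)$ for a generic choice of $p$. The point of genericity is that $\hw{k}(x,p)$ then meets the codimension $k$ skeleton of $\mk{n}$ transversely and does not meet any cone of codimension strictly greater than $k$. This means that $\hw{k}(x)$, pulled back to a $0$-dimensional cycle by $\varphi_\rho$, can be computed by summing intersection numbers over codimension $k$ cells.

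Next I would evaluate $\varphi_\rho \cdot \hw{k}(x,p)$ using \cite{bcmhurwitz}*{Proposition 5.4}, which gives the intersection multiplicity of $\hw{k}(x,p)$ with a codimension $k$ cell $\tau$ as $m(\tau)\cdot x_\tau \cdot \prod_{v \in \Gamma_\tau^{(0)}}(\val(v)-2)$. Then the degree $\varphi_\rho \cdot \hw{k}(x,p)$ can be written as
\[
\sum_{\tau \in \curly{C}^k} m(\tau)\cdot x_\tau \cdot \prod_{v \in \Gamma_\tau^{(0)}}(\val(v)-2)\cdot \omega_{\varphi_\rho \cdot \mk{n}}(\tau),
\]
where $\omega_{\varphi_\rho \cdot \mk{n}}(\tau)$ denotes the weight on $\tau$ in the divisor $\varphi_\rho \cdot \mk{n}$. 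Here I am using the fact that, for two cycles of complementary dimension whose supports meet only in expected dimension, the intersection number can be expressed as the sum of products of local weights.

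On the other side, the defining expression $D_k(x) = \sum_\tau m(\tau)\cdot x_\tau \cdot \prod_v (\val(v)-2)\cdot \varphi_\tau \cdot \mk{n}$ gives, after applying $\varphi_\rho$, exactly $\sum_\tau m(\tau)\cdot x_\tau \cdot \prod_v(\val(v)-2)\cdot (\varphi_\tau\cdot \varphi_\rho \cdot \mk{n})$. The key identification is \cite{fcocycles}*{Lemma 4.7}, which tells us that $\omega_{\varphi_\rho \cdot \mk{n}}(\tau) = \varphi_\tau \cdot \varphi_\rho \cdot \mk{n}$; this is precisely the compatibility between iterated divisors of \enquote{indicator} functions and weights on lower-dimensional skeletons. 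Once these two formulas are matched term by term, the equality $\varphi_\rho \cdot \hw{k}(x) = \varphi_\rho \cdot D_k(x)$ is immediate.

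The main conceptual obstacle is not any single computation but rather making sure the numerical equivalence is tested in the right setting: one has to trust that the intersection numbers on both sides actually compute the same pairing with the Picard-type classes $\varphi_I$, and that Proposition 5.4 of \cite{bcmhurwitz} gives the correct transverse multiplicity after choosing generic $p$. With those two inputs in place, everything else is a transcription of the definitions.
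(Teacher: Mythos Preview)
Your proposal is correct and follows essentially the same route as the paper: reduce to generic $p$, invoke \cite{bcmhurwitz}*{Proposition 5.4} for the transverse intersection multiplicities with codimension $k$ cells, and then match the resulting sum term-by-term with $\varphi_\rho \cdot D_k(x)$ via the identity $\omega_{\varphi_\rho \cdot \mk{n}}(\tau) = \varphi_\tau \cdot \varphi_\rho \cdot \mk{n}$ from \cite{fcocycles}*{Lemma 4.7}. The only difference is that you spell out the reduction to generic $p$ a bit more explicitly than the paper does.
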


\bibliographystyle{plain}
\bibliography{bibliography.bib}

\end{document}